\documentclass{article} 

\usepackage{hyperref}
\usepackage{arxiv}
\usepackage{float}

\usepackage[english]{babel}		

\usepackage{polski}
\usepackage[LY1]{fontenc}
\usepackage[utf8]{inputenc}

\usepackage{graphicx}
\usepackage{soul}

\usepackage{amsmath,amsfonts,amsthm}										
\usepackage{fancyhdr}
\numberwithin{equation}{section}															
\numberwithin{figure}{section}																
\numberwithin{table}{section}																
\usepackage{upgreek}
\usepackage{ytableau}
\usepackage{color}

\newcommand{\ck}{\color{black}}

\newcommand{\N}{\mathbb N}

\newcommand{\R}{\mathbb R}

\newcommand{\rank}{\mbox{rank}}

\newcommand{\ord}{\mathrm{ord}}

\newtheorem{theorem}{Theorem}
\newtheorem{proposition}[theorem]{Proposition}

\newtheorem{corollary}[theorem]{Corollary}
\newtheorem{lemma}[theorem]{Lemma}

\newtheorem{definition}[theorem]{Definition}

\usepackage{amsbsy}

\long\def\llap #1{\hbox to 0pt{\hss #1}}

\newenvironment{example}[1][]{\refstepcounter{theorem}\par\medskip
   \textbf{Example~\thetheorem. #1} \rmfamily}{\medskip}

\newenvironment{remark}[1][]{\refstepcounter{theorem}\par\medskip
   \textbf{Remark~\thetheorem. #1} \rmfamily}{\medskip}

\newenvironment{algorithm}{
\bigskip\noindent
\refstepcounter{theorem}
\textbf{Algorithm \thetheorem.}}

\numberwithin{theorem}{section}

\title{Local Integrable Symmetries of Diffieties}

\author{Yirmeyahu J. Kaminski\thanks{School of Mathematical Sciences, Holon Institute of Technology, Holon, Israel, email: kaminsj@hit.ac.il} \, and Fran\c{c}ois Ollivier\thanks{LIX, CNRS, École polytechnique, 91128 Palaiseau Cedex, France; with the support of:\hfill\break ERC project ODELIX, ``Solving differential equations fast, precisely, and reliably'', ERC-2023-ADG grant, number 101142171;\hfill\break ANR Project NODE, ``Résolution numérique-symbolique d'équations différentielles'', Projet-ANR-22-CE48-0016 and\hfill\break ANR project OCCAM, ``Théorie et pratique de l'élimination différentielle'', Projet-ANR-22-CE48-0008}}

\date{}

\begin{document}

\maketitle

\begin{abstract}
In the framework of diffieties, introduced by Vinogradov~\cite{Vinogradov1986}, we introduce integrable infinitesimal symmetries and show that they define a one parameter pseudogroup of local diffiety morphisms. We prove some preliminary results allowing to reduce the computation of integrable infinitesimal symmetries of a given order to solving a system of partial differential equations. We provide examples for which we can reduce to a linear system that can be solved by hand computation, and investigate some consequences for the local classification of diffiety, with a special interest for testing if a diffiety is flat~\cite{Fliess1999}.
\end{abstract}

\section*{Introduction}

The notion of diffiety, introduced by Vinogradov~\cite{Vinogradov1986}, offers a convenient framework for the study of differential equations. It allows a precise description of many notions in control theory, such as differential flatness, introduced by Fliess, Lévine, Martin and Rouchon~\cite{Fliess1999}. If no effective necessary and sufficient condition of flatness are known, some criteria are available for special cases. For example, a criterion due to Cartan~\cite{Cartan1915} provides a test for flatness of driftless systems with two controls~\cite{Rouchon1993}. A recent work deals with the case of systems having $n$ state variables and $n-1$ commands~\cite{KLO-20}. 

But little is known about the classification of diffieties, up to isomorphism. Contact varieties of dimension $3$ define driftless systems with two control, such as the classical car model~\cite{Kaminski2018}. They are all locally isomorphic, which is basically equivalent to the fact that all controllable driftless systems with two controls and three states are flat, but their global geometry is more complicated~\cite{Geiges2001}. One may also mention the more complicated problem of classifying sub-Riemannian varieties~\cite{Beliavsky1999}.

Flatness is closely related to Monge problem~\cite{Monge1784} that seems to have been the first to consider examples of differential systems, that are flat according to our modern definition. But Monge does not specify an independent variable, so that ``changes of time'' are allowed and his standpoint is closer to ``orbital flatness''~\cite[4.4]{Fliess1997b}. This is also the classical standpoint of diffiety theory.

We try here to initiate a study of the local symmetries of diffieties, with a single Cartan field. For this, we look for one parameter pseudogroups of automorphisms, generated by \emph{integrable} vector fields.
\bigskip

Our approach should not be confused with the classical theory inspired by Lie that aims at deciding equivalence of differential equation, under the action of a given group, looking for differential invariants. One may quote the well known work of Tresse on differential equations of order $2$, the contributions of Cartan~\cite{Cartan1937}, Olver~\cite[chap.~10]{Olver1995} or the work of Neut and Petitot~\cite{Neut2002} providing a full implementation of Cartan's method, allowing to solve order $3$. 

This is in some way easier, as it focuses mostly on the finite dimensional case or cases that could be easily reduced to working with jets of bounded order, and more complicated as only special cases of groups are considered. In our setting, the finite dimensional case is always trivial (see subsec.~\ref{subsec::not_stringly_access}).
When underdetermined systems are not considered, the geometrical objects associated to ordinary differential equations are in general finite dimensional: for example, all fields are integrable. 

One may notice that Zharinov~\cite[chap.~3]{Zharinov1992} introduces infinitesimal symmetries without considering integrability issues, that is the possibility to associate to a vector field a one parameter pseudogroup, which is one of the main concerns of this paper, but is of no importance for his approach.

A main concern for such works is to be able to test involutivity of a differential system and to be able to reduce any system to an equivalent involutive system. Roughly speaking, this means that one needs to decide up to what order should the system equations be differentiated to get all the differential relations of a given order satisfied by the solutions~\cite{Malgrange2005}, according to the Cartan--Kuranishi theory. ``integrable''.

One may often refer to systems possessing such properties of completeness as ``integrable''. Moreover, some classical mathematical tools considered in such approaches, such as Spencer cohomology~\cite{Malgrange1966}, do not seem to provide much help for deciding if diffieties are locally isomorphic. At best, one may recover elementary conditions, such as the equality of the differential dimensions, or of the orders for differential dimension $0$.  Some attempts to test flatness using those tools have not been successful up to now. 

We were unable to find in the literature a notion that could fully match our definitions. The notion of integrability considered by Pollack~\cite{Pollack1974} is of a different nature, as one considers pseudogroups acting on finite dimensional spaces, such as isometries of the Euclidean space, or homeomorphisms of $\mathbb{R}^n$. Even when they consider the infinite dimensional geometry of jet space, works inspired by Cartan and his equivalence method, such as Olver~\cite[chap.~2]{Olver1986} start with an explicitly given group action. Or, following E.~Noether~\cite{Kosmann-Schwarzbach2011}, one may study fields that involve higher order derivatives in their expression, the ``higher-order'' or ``generalized'' symmetries~\cite[chap.~5]{Olver1986}, but that cannot define a local group action on the whole diffiety. 

Our problem is to consider fields acting on jets of order $r$, that can depend on jets of greater order, so generalized symmetries, but that can still define a local group action. This would correspond to Cartan's ``équivalence absolue'' Cartan~\cite{Cartan1914}. One may also compare our work with Clelland~\textit{et al.}~\cite{Clelland2024} who investigate dynamic feedback linearizability of control systems with symmetries. But again, they restrict their approach to symmetries associated with transformations of the state space. For many examples, we also need to reduce to this case, but we are then able to prove that no other solution exists.
\bigskip

Section~\ref{sec::framework} introduces our theoretical framework, using a variant of Vinogradov's diffieties. We define diffiety extensions, flat extension and the linearized system, showing that the linearized system associated to a controlable diffiety is a flat extension, which allows to easily construct a basis of it.

In section~\ref{sec::one_parameter}, we introduce integrable infinitesimal symmetries and the one parameter local automorphisms they define. Th.~\ref{thm::rouchon_like} provides an analog of the Sluis -- Rouchon theorem, allowing to prove that integrable symmetries for system that do not satisfy this flatness condition must be of order $0$. In subsection~\ref{subsec::PDE}, we investigate the PDE structure obtained by considering both the Cartan field $\uptau$ and a symmetry $\updelta$. We define then a notion of characteristic set for analytic systems, extending the classical notion of differential algebra~\cite{Kolchin1973}, and reducing testing integrability to a change of ordering. We are moreover able to bound the order in $\updelta$ for such computations. The integrable symmetries are defined by algebraic PDE that themselves define diffieties.

Sections~\ref{sec::single} and \ref{sec::multi} are respectively devoted to the computation of some examples, respectively in differential dimension $1$, \textit{i.e.}\ one control, which is the easiest case, or greater dimension. We show that integrable symmetries are reduced to $0$ for generic systems of a degree great enough.

The last section~\ref{sec::classification} investigates the use of the local integrable symmetries for the local classification of diffieties. Basically, one encounters diffieties with a reduced set of integrable symmetries, such as a finite dimensional vector space, or at the other end the flat diffiety (and all diffiety being the product of a flat diffiety and some other one) that admits a infinite dimensional set of integrable symmetries, characterized by a PDE system and that we may describe using the differential transcendence function of its prime components, some having a degree at least proportional to the product of their differential dimension and the order of the symmetries.
\bigskip

This paper involves frequent references to notions that come from control theory. Although possible developments could obviously help designing genuine applications, we need to stress that the results presented here have no direct applicability and that control problems are mostly for us at this stage a valuable source of inspiration and mathematical intuition.

\section{Theoretical framework}
\label{sec::framework}

\subsection{Diffieties}\label{subsec::diffieties}

We will work in the framework of diffiety theory, introduced by Vinogradov~\cite{Vinogradov1986,Zharinov1992}.

We need first a technical definition, following~\cite{FO-BS-07,Ollivier2023}.
\begin{definition}\label{def::order}
    We say that a function $F$ depending of $x_1$, \dots, $x_n$ and a finite number of their derivatives is of order $k$ in $x_j$ at point $x=\eta$ if $F$ does not depend on any derivative of $x_j$ greater that $k$ and $\partial F/\partial x_j^{(k)}(\eta)\neq0$. We write then $\ord_{\eta,x_j}F=k$. We say that $F$ is of order $k$ at point $x=\eta$ if $k=\max_{j=1}^{n}\ord_{\eta,x_j} F$ and we write then $\ord_{\eta}F=k$.
\end{definition}

\begin{definition}\label{def::diffiety}
A diffiety is a real variety $\mathfrak{V}$ of denumerable dimension, equipped with a vector field $\uptau$, the Cartan field of the diffiety. We may have also to consider partial diffiety equipped with a $r$-uple $\Delta$ of vector fields.

The ring of functions $\mathcal{O}(\mathfrak{V})$ is the ring of $\mathcal{C}^{\infty}$ functions that depend only on a finite number of coordinates. The topology on the diffiety is the coarsest topology such that all coordinate functions are continuous.

A morphism of diffiety $\phi:(\mathfrak{V}_1,\Delta_1)\mapsto(\mathfrak{V}_2,\Delta_2)$, with $\sharp\Delta_1=\sharp\Delta_2=r$ is such that, denoting by $\phi^{\ast}$ the dual application $\mathcal{O}(\mathfrak{V}_2)\mapsto\mathcal{O}(\mathfrak{V}_1)$, we have: $\Delta_{1,i}\circ\phi^{\ast} = \phi^{\ast}\circ\Delta_{2,i}$, for all $i \in [1,r]$. 
\end{definition}
In the sequel, we will only consider ordinary diffieties, \textit{i.e.} with a single vector field $\uptau$, except in subsections~\ref{subsec::PDE}, \ref{subsec::structure} and \ref{subsec::flatness}.

\begin{remark}
    We will freely denote $\uptau x=\dot x$, $\uptau^2 x=\ddot x$, \dots, or $\uptau x=x'$, $\uptau^2 x=x''$, $\uptau^k x= x^{(k)}$, when it is easier for readability.
\end{remark}

Our definition is slightly different from the one introduced by Vinogradov. The main difference is that we consider a single Cartan field, while more generally the classical definition of diffieties relies on an involutive distribution, also called the Cartan distribution. We will also restrict to the analytic case, \textit{i.e.} $V$ is analytic as well as the field $\uptau$ and functions in $\mathcal{O}(\mathfrak{V})$.

\begin{example} The \emph{point diffiety} is $\{0\}$ equipped with derivation $0$.
\end{example}

\begin{example} The \emph{time diffiety} is $\mathbb{R}_t$ equipped with derivation $\partial_t$.

In the same spirit, we may consider the partial derivatives case with $r$ derivations $\R_{t_1, \ldots, t_r}^r$ equipped with partial derivatives $\partial_{t_j}$.
\end{example}

\begin{example}\label{ex::trivial}
The trivial diffiety of differential dimension $n$, denoted by $\mathbb{T}^n$ is $(\R^\mathbb{N})^n$ endowed with the trivial Cartan field:
\begin{equation}\label{eq::trivial}
    \mathrm{d}_{t}:=\sum_{i=1}^n \sum_{\ell \in \N} x_i^{(\ell+1)} \partial_{x_i^{(\ell)}}.
\end{equation}
We can also define a trivial diffiety which partial derivatives. Let $\Delta=\{\updelta_1, \ldots, \updelta_r\}$ and $\Theta$ the free commutative monoid generated by $\Delta$. The trivial diffiety of differential dimension $n$ with $r$ derivatives is $\mathbb{T}_\Delta^n:=(\R^{\N})^n$ equipped with the derivations
\begin{equation}\label{eq::trivial_EDP}
    \updelta_j:=\sum_{i=1}^n \sum_{\theta\in\Theta} \updelta_j\theta x_i \partial_{\theta x_i}.
\end{equation}
\end{example}

\begin{example}\label{ex::jet} The jet space $\mathrm{J}(\mathbb{R},\mathbb{R}^n)$ has a natural structure of diffiety and is isomorphic to $\mathbb{R}_t \times \mathbb{T}^n$ equipped with $\partial_t+\mathrm{d}_{t}$.

In the partial differential case, we identify $\updelta_j$ with $\mathrm{d}_{t_j}$ and the jet space $\mathrm{J}(\mathbb{R}^r,\mathbb{R}^n)$ is isomorphic to $\mathbb{R}_{t_1, \ldots, t_r}^r \times \mathbb{T}_\Delta^n$ equipped with the derivations $\partial_{t_j}+\mathrm{d}_{t_j}+\updelta_j$.
\end{example}

\subsection{Control systems}\label{subsec::control_sys}

Let us consider a control system defined on a chart of a manifold $X$ by the following differential system:
\begin{equation}\label{eq::system}
\dot{x} = f(x,u,t),
\end{equation}
and $\dot t=1$, where $x \in \R^n$, $u \in \R^m$ and $t \in \R$ and $f$ is given by smooth (that is $C^\infty$) functions. 

Notice that while our theoretical framework is the analytic case, and that most results could even be extended to the smooth case. When computational differential algebra is involved for algorithmic purpose, the function $f$ is then assumed to be algebraic. 

This restriction is not so important in practice, since it should be noted that most systems defined by analytic vector fields that are not already algebraic can actually be transformed into algebraic systems when those analytic functions are defined by differential equations. Trigonometric functions can be replaced by their rational parametrization. Exponential and logarithm expressions like $t_1 = \exp(g(x))$ and $t_2 = \ln(g(x))$ can be modeled by adding extra differential equations, as follows: $\dot{t}_1 = \dot{g}(x) t_1$ and $\dot{t}_2 = \frac{\dot{g}(x)}{g(x)}$ and so on. One just has to add new initial conditions in a suitable way, and to be careful that these new system may involve torsion elements and so not be strongly accessible, except when restricted to a leaf of a suitable foliation. See def.~\ref{def::accessibility} and the following comments.

Considering functions defined by ordinary differential equations and
initial conditions, an effective naive implementation of basic operations is straightforward. The main difficulty is to test zero equivalence, which remains a solvable problem. Considering PDE, one
may have more trouble if the initial conditions correspond to a
singular point for some of the equations~\cite{Peladan2002}. More details on such issues would exceed the scope of this paper. For more details, we refer to van der Hoeven's paper~\cite{vanderHoeven2019} and the references therein.

Consequently, the theoretical framework of this paper allows to consider systems defined by transcendental functions, as long as one is able to achieve basic computations with them.

\begin{definition}\label{def::diff_sys}
To this system is associated a diffiety $\mathfrak{X} = \R \times \R^n \times (\R^m)^\N$, endowed with the Cartan field:
\begin{equation}\label{eq::tau}
\uptau := \partial_t + \sum_{i=1}^n f_i(x,u,t) \partial_{x_i} + \sum_{j=1}^m \sum_{k=0}^\infty u_j^{(k+1)} \partial_{u_j^{(k)}}.
\end{equation}
\end{definition}

\begin{remark}
    A simple but important remark is that applying the field $\uptau$ on a function $F$ depending on a finite number of the global coordinates of the diffiety is equivalent to the computation of the time derivative $\dot{F}$, when the coordinates are considered as functions of the time. This equivalence is largely used in the sequel, without further notification and is a justification of the shorthand notation $\dot F$ for $\uptau F$.
\end{remark}

\begin{remark}\label{rem::topology} One may need for this to restrict to an open subset of $\mathfrak{X} = \R \times \R^n \times (\R^m)^\N$ if the functions $f_i$ are not defined everywhere, or if one needs for physical reasons to impose restrictions on further derivatives (bounded acceleration, etc.). Denoting $\mathfrak{X}^{[r]}$ to be the space corresponding to derivatives of order at most $r$ and $\pi_r$ the projection $\pi_r:\mathfrak{X}\mapsto\mathfrak{X}^{[r]}$, for $r$ great enough, we have $\pi_r^{-1}\pi_r\mathfrak{X}=\mathfrak{X}$, according to the topology chosen in def.~\ref{def::diffiety}. Indeed, it is the coarsest topology such that the coordinate functions are continuous, so that an open set is actually defined by its projection on a space of finite dimension.
\end{remark}

\subsection{Diffiety extensions}
\label{subsec::linearized}

Let us remind the notion of diffiety extension, as introduced in~\cite{FO-BS-07}, that will be useful in the sequel. 

\begin{definition}\label{def::diffiety_ext}
We say that a diffiety $\mathfrak{Y}_\uptau$ is an extension of a diffiety $\mathfrak{X}_{\hat\uptau}$ if there exists a Lie-B\"acklund morphism $\mathfrak{Y} \rightarrow \mathfrak{X}$, which is a surjective submersion. 
We say that $\mathfrak{Y}$ is a trivial extension of $\mathfrak{X}$ is $\mathfrak{Y} = \mathfrak{X} \times \mathbb{T}^n$, for some $n$.  
\end{definition}

One may define an extension using a system of the form:
\begin{equation}\label{eq::tau_ext}
\uptau := \hat\uptau + \sum_{i=1}^n f_i(x,u,w) \partial_{x_i} + \sum_{j=1}^m \sum_{k=0}^\infty u_j^{(k+1)} \partial_{u_j^{(k)}},
\end{equation}
where $w$ stands for a finite number of coordinate functions of $\mathfrak{X}$.

\begin{example}
    Any diffiety is an extension of the point diffiety.
\end{example}

\begin{example}\label{ex::jet_triv}
  The jet space is a trivial extension of the time diffiety.
\end{example}

\begin{example}\label{ex::time_varying}
    A time varying system~\eqref{eq::system} defines a diffiety that is is an extension of the time diffiety, a trivial extension and an extension of the jet space.    
\end{example}

\begin{definition}
Given two extensions $\pi_1: \mathfrak{Y}_1 \rightarrow \mathfrak{X}$ and $\pi_2: \mathfrak{Y}_2 \rightarrow \mathfrak{X}$, a morphism from $(\mathfrak{Y}_1,\pi_1)$ to $(\mathfrak{Y}_2,\pi_2)$ is a Lie-Bäcklund morphism $f: \mathfrak{Y}_1 \rightarrow \mathfrak{Y}_2$, such that $\pi_1 = \pi_2 \circ f$.    
\end{definition}

\begin{definition}
\label{def::flat_extension}
We say that an extension $\mathfrak{Y}$ of $\mathfrak{X}$ is flat at some point $p \in \mathfrak{Y}$ if there exists an open set $\mathfrak{U} \subset \mathfrak{Y}$, such that $\mathfrak{U}$ is Lie-B\"acklund isomorphic to an open subset of a trivial extension of $\mathfrak{X}$.   

A flat diffiety is a flat extension of the time diffiety.
\end{definition}

\subsection{Strong accessibility}
\label{subsec::accessibility}

Throughout the paper, we shall assume that the Jacobian matrix $M:=\left ( \frac{\partial f_i}{\partial u_k} \right)_{ik}$ has full rank $m$. This condition will be referred as \textit{the full control condition} in the sequel. This assumption can always be satisfied by incorporating command variables into the state vector\footnote{We can always restrict to this case by keeping $s:=\mathrm{rank}M$ controls, say $u_1, \ldots, u_s$ up to a permutation, completed with new controls $\dot u_{s+1}, \ldots, \dot u_m$, and taking $u_{s+1}, \ldots, u_m$ for new state variables.}.

Moreover the system is assumed to be \emph{strongly accessible}~\cite{Sussmann1972, Fliess1997}. We recall the definition. 

\begin{definition}\label{def::accessibility}
   A system~\eqref{eq::system} is \emph{strongly accessible} if
 for every $T > 0$, the set of \emph{reachable points} at time $T$
$$
\mathcal{R}(x_0,T) = \{x(T) |  x(0) = x_0, \forall t \in [0,T], \dot{x}(t) = f(x(t),u), u(.) \text{ analytic}\}
$$
has a non-empty interior for any initial condition $x_0$ in a dense open set.
\end{definition}
  
According to Fliess \textit{et al.}~\cite{Fliess1997}, the strong
accessibility is equivalent to the following property: for all
function $F: \R^n \rightarrow \R$ of $x$, if $F$ is constant along the
trajectory of the system, it is merely a constant, that is if $\uptau F
= 0$ then $F$ is constant on $\R^n$.
If the strong accessibility assumption is not satisfied, the system is constrained to evolve in a leaf of a given foliation. Then on this leaf, it is strongly accessible. The following proposition provides an easy criterion. Some more details will be given in subsection~\ref{subsec::not_stringly_access}.

\begin{proposition}\label{prop::accessibility}
Consider the system defined by the Cartan field $\tau$. 
Let $\updelta$ be a derivation. We recursively define $\hat\uptau^0\updelta:=\updelta$ and $\hat\uptau^{k+1}\updelta:=[\hat\uptau^{k}\updelta,\uptau]$.
The system is strongly accessible iff the Lie algebra generated by the $\hat\uptau^k\partial_{u_{j}}$, $1\le j\le m$, $k\in\mathbb{N}$ has dimension $n+m$. 
\end{proposition}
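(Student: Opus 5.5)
The plan is to reduce the statement to the classical Lie rank characterisation of strong accessibility, combined with the characterisation of Fliess \textit{et al.}~\cite{Fliess1997} recalled above: the system is strongly accessible iff every function $F(x)$ with $\uptau F=0$ is constant. I work on the finite-dimensional slice of $\mathfrak{X}$ with coordinates $(t,x,u)$. Since $\uptau$ involves $\partial_t$ and higher derivatives of $u$, I first isolate the relevant part. Because $[\partial_{u_j},\uptau]=\sum_i \partial f_i/\partial u_j\,\partial_{x_i}+\partial_{u_j'}$-type terms, it is convenient to compute modulo fields in the $u^{(k)}$, $k\ge1$, directions. Concretely, I look at the projection of the brackets onto the $(x,u)$ coordinates, restricted to functions of $(x,u)$, where $\partial_{u_j}$ itself accounts for $m$ directions.

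For the direction ``rank $n+m$ $\Rightarrow$ strongly accessible'', suppose $F(x)$ satisfies $\uptau F=0$, i.e.\ $\sum_i f_i\,\partial F/\partial x_i + \partial_t F=0$. I would show by induction on $k$ that every field $\hat\uptau^k\partial_{u_j}$ annihilates $F$, and hence so does every bracket of them. The case $k=0$ holds because $F$ does not depend on $u$. For the induction step, $[\hat\uptau^k\partial_{u_j},\uptau]F = \hat\uptau^k\partial_{u_j}(\uptau F)-\uptau(\hat\uptau^k\partial_{u_j}F)=0$. So the Lie algebra annihilates $dF$. If it spans a space of dimension $n+m$ in the $(x,u)$ directions, then $\partial F/\partial x_i=0$ for all $i$, and $F$ is constant.

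For the converse, suppose the generated distribution $\mathcal{L}$ has rank $<n+m$ on an open set. It is involutive by construction, so Frobenius gives a nonconstant first integral $F$ with $\mathcal{L}F=0$. The $\partial_{u_j}\in\mathcal{L}$, so $F$ does not depend on $u$. The step I expect to be the main obstacle is the following: having $\mathcal{L}$ invariant under bracketing with $\uptau$, I must show the level sets of $F$ can be chosen $\uptau$-invariant. That is, I want $\uptau F$ to be a function of $F$ (and $t$), so that after a reparametrisation one gets a genuine nonconstant invariant $G(x)$ with $\uptau G=0$.

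I would handle this obstacle as follows. Since $[\mathcal{L},\uptau]\subset\mathcal{L}$, the derivative $\uptau F$ is again annihilated by $\mathcal{L}$: indeed $X(\uptau F)=\uptau(XF)+[X,\uptau]F=0$. Hence $\uptau F=\Phi(F_1,\dots,F_s,t)$, where $F_1,\dots,F_s$ is a full set of first integrals of $\mathcal{L}$. This yields a finite-dimensional ODE on the quotient. Its flow gives $s\ge1$ functions $G$ with $\uptau G=0$, depending on $x$ and $t$. Strong accessibility in the sense of the Fliess criterion then fails, with the time-dependence handled by including $t$ among the state variables when $\dot t=1$ is adjoined. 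One subtlety is that the Lie algebra must be understood over the relevant jet coordinates and counted modulo the $u^{(k)}$, $k\ge1$, directions. Another is that the statement holds generically, on a dense open set, matching Definition~\ref{def::accessibility}.
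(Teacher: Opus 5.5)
Your forward implication is sound. For $F=F(t,x)$ with $\uptau F=0$, the induction $\hat\uptau^{k+1}\partial_{u_j}F=\hat\uptau^{k}\partial_{u_j}(\uptau F)-\uptau(\hat\uptau^{k}\partial_{u_j}F)=0$, together with the Jacobi identity, makes the whole Lie algebra annihilate $F$, so rank $n+m$ gives $\partial_xF=0$. (Minor: $[\partial_{u_j},\uptau]=\sum_i\partial_{u_j}f_i\,\partial_{x_i}$ exactly; it is $[\partial_{u_j'},\uptau]$ that yields $\partial_{u_j}$.)

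The gap is in the converse, at the Frobenius step. It is not the $\uptau$-invariance you single out, which your identity $X(\uptau F)=\uptau(XF)+[X,\uptau]F$ settles at once. The fields of $\mathcal{L}$ have no components along $\partial_t$ or $\partial_{u_j^{(k)}}$, $k\ge1$, but their coefficients involve $u^{(k)}$ for arbitrarily large $k$ (already $\hat\uptau^{2}\partial_{u_j}$ contains $\dot u$ in general). So every function of $t,u',u'',\dots$ is a first integral of $\mathcal{L}$. Frobenius, applied with the higher jets as parameters, only produces some $F(x;t,u',\dots,u^{(N)})$; knowing that $F$ is free of $u$ says nothing about $u',u'',\dots$. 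For such $F$, $\uptau F$ involves $u^{(N+1)}$, so $\Phi$ depends on the higher jets. The quotient is then again a diffiety with controls, not a finite-dimensional ODE, and there is no flow to integrate. Nor does rank $<n+m$ by itself give a first integral depending on $x$. On $(x_1,x_2,u,u',\dots)$ the distribution $\langle\partial_u,\partial_{x_1}+u'\partial_{x_2}\rangle$ has rank $2<3$, yet any $F(t,x)$ it annihilates satisfies $\partial_{x_1}F=\partial_{x_2}F=0$.

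What is missing is that $\mathcal{L}$ is stable under brackets with every $\partial_{u_h^{(k)}}$, $k\ge 1$. This is precisely what fails in that example, since $[\partial_{u'},\partial_{x_1}+u'\partial_{x_2}]=\partial_{x_2}$. The stability follows from $[\partial_{u_h^{(k)}},\uptau]=\partial_{u_h^{(k-1)}}$ and Jacobi. With $Y_l:=\hat\uptau^{l}\partial_{u_j}$ one has $[\partial_{u_h^{(k)}},Y_{l+1}]=[[\partial_{u_h^{(k)}},Y_l],\uptau]-[\partial_{u_h^{(k-1)}},Y_l]$. Induction on $l$, starting from $[\partial_{u_h^{(k)}},\partial_{u_j}]=0$ and using $\partial_{u_h}\in\mathcal{L}$ and $[\mathcal{L},\uptau]\subset\mathcal{L}$, gives $[\partial_{u_h^{(k)}},\mathcal{L}]\subset\mathcal{L}$; this is the content of Lemma~\ref{lem::involutive}.

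With this in hand, the repair goes as follows.
\begin{itemize}
\item Write the $\partial_x$-part of $\mathcal{L}$, of rank $\rho<n$, in row-reduced form $Z_a=\partial_{x_{i_a}}+\sum_{l\notin I}c_{a,l}\partial_{x_l}$.
\item For all $k\ge0$, $[\partial_{u_h^{(k)}},Z_a]$ lies in $\mathcal{L}$ and has zero pivot coefficients, hence vanishes. So $c_{a,l}=c_{a,l}(t,x)$, and by the same argument the $Z_a$ commute.
\item Frobenius on $(t,x)$-space gives $n-\rho$ first integrals $F_i(t,x)$, independent in $x$.
\item Each $\uptau F_i$ is again killed by $\mathcal{L}$, in particular by the $\partial_{u_h}$. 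It is therefore a function of $(t,x)$ annihilated by the $Z_a$, i.e.\ $\Phi_i(t,F)$, and your integration of $\dot F=\Phi(t,F)$ now goes through.
\end{itemize}

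One last point on $t$. Do not make $t$ a state variable: the target rank would become $n+m+1$, which is never reached. Do keep first integrals of the form $F(t,x)$, though. With $F$ a function of $x$ alone the equivalence fails: $\dot x_1=1$, $\dot x_2=u$ has rank $2<3$, and its only invariants are functions of $x_1-t$.
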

\begin{proof} Theorem 1.4~iii) of \cite{Fliess1997} implies that this is equivalent to the strong accessibility condition of Sussmann and Jurjevic~\cite[th.~3.1]{Sussmann1972}
\end{proof}

 \subsection{Linearized system}\label{subsec::linearized}

 To any diffiety $V$, we may associate its \emph{tangent extension}. If the diffiety is defined by a control system~\eqref{eq::system}, the tangent extension is defined by the original system together with the system
 \begin{equation}\label{eq::linearized}
     \mathrm{d}x_i=\mathrm{d}f_i,
 \end{equation}
where 
\begin{equation}\label{eq::d}
     \mathrm{d}f_i=\sum_{k=1}^{n}\partial_{x_k}f_i(x,u,t)\mathrm{d}x_k+
     \sum_{\ell=1}^{m}\partial_{u_{\ell}}f_i(x,u,t)\mathrm{d}u_l.
 \end{equation}

This new explicit system defines a diffiety extension and the $\mathrm{d}x_i$ as well as the $\mathrm{d}u_i^{(k)}$ may be just seen as convenient symbols for new coordinate functions. The prop.~\ref{prop::diff} shows that $\mathrm{d}$ defines a derivation that satisfies indeed the universal property of Kähler differentials in algebra.

\begin{definition}\label{def::tangent}
    The tangent extension $\mathrm{T}V$ of $(V,\uptau)$ (the diffiety associated to system~\eqref{eq::system}) is $V\times\mathbb{R}^{n}\times\left(\mathbb{R}^{\mathbb{N}}\right)^m$ equipped with the derivation $\tilde\uptau:=\uptau+\uptau_2$, where:
\begin{equation}\label{eq::hat_tau}
    \uptau_2:= \sum_{i=1}^n \mathrm{d}f_i\partial_{\mathrm{d}x_i}+
         \sum_{j=1}^m\sum_{k\in\mathbb{N}} \mathrm{d}u_j^{(k+1)}\partial_{\mathrm{d}u_j^{(k)}}.
\end{equation}
\end{definition}
It is easily checked that this is precisely the diffiety associated to the union of the systems \eqref{eq::system} and \eqref{eq::linearized}, according to def.~\ref{def::diff_sys}. For the sake of simplificity, we shall call this extended system, merely \textit{the linearized system}.

\begin{remark}
    Note that in this system the functions $\mathrm{d}u_j$  ${1\leq j \leq m}$ are new commands, in addition to the former commands $(u_j)_{1 \leq j \leq m}$.
\end{remark}

More generally, it is enough here just to consider the $\mathrm{d}x_i$ and $\mathrm{d}u_{j}^{(k)}$ as new coordinate functions, but the next proposition shows that $\mathrm{d}$ defines a universal derivation that satisfies the universal property that characterizes the module of differentials.

\begin{proposition}\label{prop::diff}
    Let $\updelta$ be any derivation from the ring $\mathcal{O}$ of functions to a differential $\mathcal{O}$ module $M$, then there exists a unique differential module morphism $\phi:\mathcal{O}(V)[\mathrm{d}x]\mapsto M$ such that $\updelta=\phi\circ\mathrm{d}$, where $\mathcal{O}[\mathrm{d}x]$ denoted the differential $\mathcal{O}(\mathrm{T}V)$ submodule generated by $\mathrm{d}x$.
\end{proposition}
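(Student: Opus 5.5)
The plan is to exploit the fact that $\mathcal{O}(V)[\mathrm{d}x]$ is, as an $\mathcal{O}(V)$-module, freely generated by the symbols $\mathrm{d}t$, $\mathrm{d}x_i$ ($1\le i\le n$) and $\mathrm{d}u_j^{(k)}$ ($1\le j\le m$, $k\in\mathbb{N}$). I read $\mathrm{d}x$ as the whole family of coordinate differentials, with $\mathrm{d}t$ included or set to zero according to whether $t$ is a genuine coordinate. Any function $F\in\mathcal{O}(V)$ depends on finitely many of these coordinates. Define
\[
\mathrm{d}F=\partial_t F\,\mathrm{d}t+\sum_{i=1}^n\partial_{x_i}F\,\mathrm{d}x_i+\sum_{j,k}\partial_{u_j^{(k)}}F\,\mathrm{d}u_j^{(k)},
\]
which agrees with~\eqref{eq::d}. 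This is a derivation $\mathcal{O}(V)\to\mathcal{O}(V)[\mathrm{d}x]$.

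The first step is to check that $\mathrm{d}$ commutes with the Cartan fields: $\tilde\uptau(\mathrm{d}F)=\mathrm{d}(\uptau F)$. On generators this holds by construction. Indeed $\tilde\uptau\,\mathrm{d}x_i=\mathrm{d}f_i=\mathrm{d}(\uptau x_i)$ and $\tilde\uptau\,\mathrm{d}u_j^{(k)}=\mathrm{d}u_j^{(k+1)}$ by~\eqref{eq::hat_tau}, and $\tilde\uptau\,\mathrm{d}t=0=\mathrm{d}(1)$. For general $F$, expand both sides with the chain rule. The identity reduces to $\tilde\uptau(\partial_y F)=\partial_y(\uptau F)-\sum_z \partial_z F\,\partial_y(\uptau z)$ over coordinates $y,z$, i.e. the commutator $[\partial_y,\uptau]$, which holds identically. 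So $\mathrm{d}$ is a derivation compatible with the differential structures, and $\mathcal{O}(V)[\mathrm{d}x]$ is indeed a differential $\mathcal{O}$-module.

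For existence, let $\updelta:\mathcal{O}\to M$ be a derivation into a differential module $(M,\partial_M)$. The derivation is assumed compatible, $\partial_M\updelta=\updelta\uptau$, which is how I read ``differential'' in the statement. Define $\phi$ on generators by $\phi(\mathrm{d}y)=\updelta y$ for every coordinate $y$, and extend $\mathcal{O}$-linearly, which is possible because the module is free. Then $\phi\circ\mathrm{d}=\updelta$ needs a chain rule for $\updelta$: $\updelta F=\sum_y\partial_yF\,\updelta y$ for every analytic $F$ in finitely many variables. For polynomials this follows from the Leibniz rule. For general analytic or smooth $F$, use Hadamard's lemma: write $F(y)-F(y_0)=\sum (y-y_0)_\ell\, g_\ell(y)$ locally, apply $\updelta$, and evaluate at $y_0$ after localizing. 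Compatibility of $\phi$ with the differential structure is then checked on generators: $\phi(\tilde\uptau\,\mathrm{d}y)=\phi(\mathrm{d}\uptau y)=\updelta\uptau y=\partial_M\updelta y=\partial_M\phi(\mathrm{d}y)$. Uniqueness is immediate, since any such $\phi$ is fixed on the generators $\mathrm{d}y$, and these generate the module.

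The main obstacle is this chain-rule step for non-polynomial functions. An abstract derivation into an arbitrary module need not satisfy the chain rule for $\mathcal{C}^\infty$ functions unless $M$ carries some locality or sheaf structure. I would handle it in one of two ways. One option is to restrict, as the paper suggests, to the algebraic setting or to functions that are polynomial in the higher derivatives. The other is to assume that $M$ is a module of sections, so that the Hadamard argument applies pointwise. I would state this hypothesis explicitly in the proof.
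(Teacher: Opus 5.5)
Your proposal takes the same route as the paper, whose entire proof is the observation that $\phi$ is forced by, and determined by, $\phi(\mathrm{d}x_i)=\updelta x_i$. You additionally make explicit the two hypotheses that this one line takes for granted: the compatibility $\partial_M\updelta=\updelta\uptau$, which is necessary because $\tilde\uptau\circ\mathrm{d}=\mathrm{d}\circ\uptau$, and the chain rule for $\updelta$ on non-polynomial functions, which does fail for arbitrary modules, so stating both as hypotheses is correct.

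One small correction: \eqref{eq::d} has no $\partial_t f_i\,\mathrm{d}t$ term, so your claim that $\mathrm{d}F$ agrees with it holds only for the choice $\mathrm{d}t=0$, and that choice imposes the extra condition $\updelta t=0$.
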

\begin{proof} One must have $\phi\mathrm{d}x_i=\updelta x_i$, and this choice is enough to define $\phi$.
\end{proof}

We will now prove a useful result: the linearized system of a strongly accessible system defines a flat extension. For this, we will need a few easy lemmas.

\begin{lemma}\label{lem::transfer}
    With the notations of def.~\ref{def::tangent}, let
$$
\updelta=\sum_{i=1}^n a_{i} \partial_{x_i} + \sum_{j=1}^m \sum_{k=0}^\infty b_{j,k} \partial_{u_j^{(k)}},
$$
where the $a_i$ and $b_{j,k}$ belong to $\mathcal{O}(V)$,
and 
$$
\updelta_2=\sum_{i=1}^n a_{i} \partial_{\mathrm{d}x_i} + \sum_{j=1}^m \sum_{k=0}^\infty b_{j,k} \partial_{\mathrm{d}u_j^{(k)}}.
$$
Assuming that 
$$
\hat\uptau   \updelta = [\updelta,\tau]\footnote{According to proposition~\ref{prop::accessibility}} = \sum_{i=1}^n \hat a_{i} \partial_{x_i} + \sum_{j=1}^m \sum_{k=0}^\infty \hat b_{j,k} \partial_{u_j^{(k)}},
$$
then 
$$
\hat\uptau_2\updelta_2= [\updelta_2,\uptau_2] = \sum_{i=1}^n \hat a_{i} \partial_{\mathrm{d}x_i} + \sum_{j=1}^m \sum_{k=0}^\infty \hat b_{j,k} \partial_{\mathrm{d}u_j^{(k)}},
$$
using the notation $\uptau_2$ of \eqref{eq::hat_tau}.
\end{lemma}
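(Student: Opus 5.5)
The proof is a direct coordinate computation of both brackets and a term-by-term comparison. One point about notation comes first. The coefficients $a_i,b_{j,k}$ are functions on $V$, and the part $\uptau_2$ of \eqref{eq::hat_tau} does not differentiate the base coordinates $x,u^{(k)},t$. So the bracket on the left can only reproduce the terms $\uptau(a_i)$, $\uptau(b_{j,k})$ that occur in $[\updelta,\uptau]$ if it is taken with the full Cartan field $\tilde\uptau=\uptau+\uptau_2$ of def.~\ref{def::tangent}. I will therefore prove
$$[\updelta_2,\tilde\uptau]=[\updelta_2,\uptau]+[\updelta_2,\uptau_2]=\sum_{i=1}^n \hat a_{i} \partial_{\mathrm{d}x_i} + \sum_{j=1}^m \sum_{k=0}^\infty \hat b_{j,k} \partial_{\mathrm{d}u_j^{(k)}},$$
which is what the statement means by its bracket.

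\emph{Step 1: expand $[\updelta,\uptau]$.} Use $[X,Y]=\sum (X(Y^\alpha)-Y(X^\alpha))\partial_\alpha$ and the fact that $\updelta$ has no $\partial_t$ component, so $\updelta(1)=0$. Since $f_i$ depends only on $x,u,t$,
$$\hat a_i=\sum_{k=1}^n a_k\partial_{x_k}f_i+\sum_{\ell=1}^m b_{\ell,0}\partial_{u_\ell}f_i-\uptau(a_i),\qquad \hat b_{j,k}=b_{j,k+1}-\uptau(b_{j,k}).$$

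\emph{Step 2: the bracket with $\uptau$.} The coefficients of $\uptau$ involve no $\mathrm{d}$-variables, so $\updelta_2$ kills them. Hence $[\updelta_2,\uptau]=-\sum_i\uptau(a_i)\partial_{\mathrm{d}x_i}-\sum_{j,k}\uptau(b_{j,k})\partial_{\mathrm{d}u_j^{(k)}}$.

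\emph{Step 3: the bracket with $\uptau_2$.} Here $\uptau_2$ kills the $a_i,b_{j,k}$, since they lie in $\mathcal{O}(V)$. Meanwhile $\updelta_2$ acts on the coefficients of $\uptau_2$, which are linear in the $\mathrm{d}$-variables, by substituting $a_k$ for $\mathrm{d}x_k$ and $b_{j,k}$ for $\mathrm{d}u_j^{(k)}$. Using \eqref{eq::d}, this gives $\updelta_2(\mathrm{d}f_i)=\sum_k a_k\partial_{x_k}f_i+\sum_\ell b_{\ell,0}\partial_{u_\ell}f_i$ and $\updelta_2(\mathrm{d}u_j^{(k+1)})=b_{j,k+1}$.

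\emph{Step 4: compare.} Adding Steps 2 and 3 yields exactly the coefficients $\hat a_i,\hat b_{j,k}$ of Step 1. The only delicate point is the bookkeeping in the first paragraph, namely seeing that the $\uptau(a_i)$ and $\uptau(b_{j,k})$ terms come from the $\uptau$ part of $\tilde\uptau$. Once that is settled, the rest is mechanical. One should also note that all sums are locally finite, because each function depends on finitely many coordinates, so the brackets of these infinite vector fields are well defined.
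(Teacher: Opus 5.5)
Your proposal is correct and follows essentially the same route as the paper: a direct coordinate computation of the bracket coefficients, using $\partial_{x_k}f_i=\partial_{\mathrm{d}x_k}\mathrm{d}f_i$ and $\partial_{u_\ell}f_i=\partial_{\mathrm{d}u_\ell}\mathrm{d}f_i$ to match $\hat a_i$ and $\hat b_{j,k}$ term by term. Your observation that the bracket must be taken with the full field $\tilde\uptau=\uptau+\uptau_2$ is also correct, and it makes explicit what the paper's proof uses silently: the coefficients it attributes to $\hat\uptau_2\updelta_2$ contain $-\dot a_i$ and $-\dot b_{j,k}$, which $\uptau_2$ alone cannot produce.
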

\begin{proof}
Easy computations show that we have indeed 
$$\begin{array}{ll}
\hat\uptau\updelta=[\updelta,\uptau] &=
\sum_{i=1}^n \left(-\dot{a}_i+\sum_{\ell=1}^{n}  a_k\partial_{x_k}f_i + \sum_{\lambda=1}^{m} b_{j,0}\partial_{u_j}f_i\right)\partial_{x_i}\\
&\hphantom{=}+\sum_{j=1}^m \sum_{k=0}^\infty \left(b_{j,k+1}\partial_{u_{j,k+1}}u_{j,k+1}-\dot b_{j,k}\right)\partial_{u_k^{(k)}}\\
     &=
\sum_{i=1}^n \left(-\dot{a}_i+\sum_{\ell=1}^{n}  a_k\partial_{\mathrm{d}x_k}\mathrm{d}f_i + \sum_{\lambda=1}^{m} b_{j,0}\partial_{\mathrm{d}u_j}\mathrm{d}f_i\right)\partial_{x_i}\\
&\hphantom{=}+\sum_{j=1}^m \sum_{k=0}^\infty \left(b_{j,k+1}\partial_{\mathrm{d}u_{j,k+1}}\mathrm{d}u_{j,k+1}-\dot b_{j,k}\right)\partial_{u_k^{(k)}},
\end{array}
$$
so that
$\hat b_{j,k}=b_{j,k+1}  -\dot{b}_{j,k}$, which is also the corresponding coefficient of $\hat\uptau_2\updelta_2$. Also, we have
    $$
\hat a_i= -\dot{a}_i+\left(\sum_k a_k\partial_{x_k}f_i + \sum_j b_{j,0}\partial_{u_j}f_i\right)=-\dot{a}_i+\left(\sum_k a_k\partial_{\mathrm{d}x_k}\mathrm{d}f_i + \sum_j b_{j,0}\partial_{\mathrm{d}u_j}\mathrm{d}f_i\right),
    $$
which is also the expression of the corresponding coefficient of $\hat\uptau_2\updelta_2$.
\end{proof}

\begin{lemma}\label{lem::involutive}
    We will extend the notation $\hat\uptau^{k} \partial_{u_j}$ by setting $\hat\uptau^{-k} \partial_{u_j}=\partial_{u_j^{(k)}}$.
Let $\Gamma:=\langle\hat\uptau^k\partial_{u_j}| 1\le j\le m, k\in\mathbb{Z}\rangle$, then
 $\Gamma$ is involutive.
\end{lemma}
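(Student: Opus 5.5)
The plan is to show that the $\mathcal{O}(V)$-module $\Gamma$ is closed under Lie brackets, checking this on generators. Two preliminary facts reduce everything to brackets of generators.

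First, the Leibniz rule $[gX,hY]=gh[X,Y]+gX(h)Y-hY(g)X$ shows that it is enough to prove $[\hat\uptau^k\partial_{u_i},\hat\uptau^l\partial_{u_j}]\in\Gamma$ for all $k,l\in\mathbb{Z}$.

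Second, $\Gamma$ is stable under $Z\mapsto[Z,\uptau]$. For generators with $k\ge0$ this is the definition $\hat\uptau^{k+1}\partial_{u_j}=[\hat\uptau^k\partial_{u_j},\uptau]$. For $k<0$, a direct computation from \eqref{eq::tau} gives $[\partial_{u_j^{(s)}},\uptau]=\partial_{u_j^{(s-1)}}$ for $s\ge1$. For general elements, $[gZ,\uptau]=g[Z,\uptau]-\dot g Z$.

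I will then prove, by induction on $k$, the statement $Q(k)$: for all $i,j$ and all $l\in\mathbb{Z}$, $[\hat\uptau^k\partial_{u_i},\hat\uptau^l\partial_{u_j}]\in\Gamma$.

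\emph{Base case $k\le0$.} Here the first field is $\partial_{u_i^{(s)}}$ with $s=-k\ge0$.
\begin{itemize}
\item If also $l\le 0$, the two fields are coordinate fields and commute.
\item If $l\ge1$, I induct on $l$. For $l=1$, set $Z=\hat\uptau^{l-1}\partial_{u_j}$, so that $\hat\uptau^{l}\partial_{u_j}=[Z,\uptau]$. The Jacobi identity gives
$$[\partial_{u_i^{(s)}},[Z,\uptau]]=[[\partial_{u_i^{(s)}},Z],\uptau]+[Z,[\partial_{u_i^{(s)}},\uptau]].$$
The first term is $\hat\uptau$ applied to a bracket lying in $\Gamma$ by the previous step, hence lies in $\Gamma$ by the stability remark.
\item The second term is $[Z,\partial_{u_i^{(s-1)}}]$ when $s\ge1$, or $[Z,\hat\uptau\partial_{u_i}]$ when $s=0$.
\item In the case $s\ge1$, this second term is a bracket of $\hat\uptau^{l-1}\partial_{u_j}$ with a pure coordinate field. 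It is handled by the induction hypothesis on $l$, with an inner induction on $s$ if needed.
\end{itemize}

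The case $s=0$ is where I expect the main obstacle: it produces $[\hat\uptau^{l-1}\partial_{u_j},\hat\uptau\partial_{u_i}]$, a bracket of two positively indexed fields, which is not covered by a descent on $l$ alone. I would handle this by performing the induction on the pair $(k,l)$ ordered by $k+l$, using the observation that for $l\ge1$ the field $\hat\uptau^l\partial_{u_j}$ has no $\partial_{u^{(\cdot)}}$ component. This holds since $\partial_{u^{(k)}}$ annihilates the coefficients $u^{(k'+1)}$ when $k'\ne k-1$. Consequently the bracket is a pure $\partial_x$-field whose coefficients depend only on $x,u,\dots,u^{(l-1)}$, which keeps the recursion finite.

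\emph{Inductive step $Q(k-1)\Rightarrow Q(k)$ for $k\ge1$.} Write $\hat\uptau^k\partial_{u_i}=[X',\uptau]$ with $X'=\hat\uptau^{k-1}\partial_{u_i}$, and let $Y=\hat\uptau^l\partial_{u_j}$. By Jacobi,
$$[[X',\uptau],Y]=[[X',Y],\uptau]-[X',[Y,\uptau]]=\hat\uptau[X',Y]-[X',\hat\uptau^{l+1}\partial_{u_j}].$$
Both terms involve $X'$, of index $k-1$, against an arbitrary second index. So $Q(k-1)$, which holds for \emph{all} $l$, puts $[X',Y]$ and $[X',\hat\uptau^{l+1}\partial_{u_j}]$ in $\Gamma$, and stability of $\Gamma$ under $\hat\uptau$ finishes the step.

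Quantifying $Q(k)$ over all $l\in\mathbb{Z}$ is what makes this step close, since the Jacobi identity raises the second index while lowering the first. The only delicate point is therefore the base case $k=0$ with $l$ positive, discussed above.
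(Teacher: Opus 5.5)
Your Leibniz reduction, the $\hat\uptau$-stability of $\Gamma$ and your inductive step $Q(k-1)\Rightarrow Q(k)$ are correct and match the paper. The base case, however, is a genuine gap, and the proposed repair does not close it. On the troublesome term the Jacobi identity leaves the index sum unchanged. Put $B_a:=[\hat\uptau^{a}\partial_{u_i},\hat\uptau^{S-a}\partial_{u_j}]$. Then $B_a=[[\hat\uptau^{a-1}\partial_{u_i},\hat\uptau^{S-a}\partial_{u_j}],\uptau]-B_{a-1}$, so $B_a\equiv -B_{a-1}$ modulo $\Gamma$ once index sum $S-1$ is settled.

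Your case $s=0$ is exactly $B_0\equiv -B_1$, and your own inductive step applied to $B_1$ returns $B_1\equiv -B_0$, which is a tautology. So ordering by $k+l$ gives no descent. Lowering the first index further never reaches a trivially vanishing bracket either: the coefficients of $\hat\uptau^{S+p}\partial_{u_j}$ may depend on $u^{(S+p-1)}$, and $S+p-1\ge p$ whenever $S\ge1$, so $[\partial_{u_i^{(p)}},\hat\uptau^{S+p}\partial_{u_j}]$ need not vanish. Your remark that $\hat\uptau^{l}\partial_{u_j}$, $l\ge1$, is a $\partial_x$-field with coefficients of order at most $l-1$ is true, but it says nothing about membership in $\Gamma$, which is the whole question. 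Already $[\partial_{u_i},\hat\uptau\partial_{u_j}]=\sum_r\partial_{u_i}\partial_{u_j}f_r\,\partial_{x_r}$ lies in $\Gamma$ only because of higher-index generators. For $\dot x_1=u$, $\dot x_2=u^2$ it equals $2\partial_{x_2}=-\dot u^{-1}\hat\uptau^2\partial_u$. Quantifying $Q(k)$ over all $l\in\mathbb{Z}$ is precisely what makes every base case unreachable.

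The missing ingredient is the finiteness the paper establishes first. Since $\hat\uptau^{k}\partial_{u_j}\in\langle\partial_{x_i}\rangle+\langle\partial_{u_j}\rangle$ for $k\ge0$, the flag $\langle\hat\uptau^{\ell}\partial_{u_j}\mid 0\le\ell\le k\rangle$ stabilizes. Hence $\Gamma$ is generated by the $\hat\uptau^{k}\partial_{u_j}$ with $k\le n$, whose coefficients involve control derivatives of order at most $n-1$. This gives a genuine base case: the bracket of $\partial_{u_h^{(p)}}$, $p\ge n$, with any such generator is $0$.

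From there, run your inductive step upward from $k=-n$. It is valid for negative $k$ too, because $[\partial_{u_j^{(p)}},\uptau]=\partial_{u_j^{(p-1)}}$ for $p\ge1$. Restrict the second index to $l\le n$; when the step produces index $n+1$, expand $\hat\uptau^{n+1}\partial_{u_j}$ on generators of index at most $n$ and use the Leibniz rule. In the example above, this is exactly where $\hat\uptau^{3}\partial_u=-(\ddot u/\dot u)\hat\uptau^{2}\partial_u$ supplies the missing term. That is the paper's proof; without the stabilization, the Jacobi recursion alone cannot decide the brackets of positive index sum.
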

\begin{proof}
    As for $k \geq 0$, $\hat\uptau^k\partial_{u_j}\in\langle\partial_{x_i}|1\le i\le n\rangle+\langle\partial_{u_j}|1\le j\le m\rangle$, we have for any $k\in\mathbb{N}$ and any $1\le j\le m$ $\hat\uptau^k\partial_{u_j}\in\langle\hat\uptau^{\ell}\partial_{u_{\hat{\mathrm{Im}ath}}}|0\le\ell\le n,\>1\le \hat{\mathrm{Im}ath}\le m\rangle$, so that \begin{equation}\label{eq::Gamma}
    \Gamma=\langle\hat\uptau^k\partial_{u_j}| 1\le j\le m, -\infty\le k\le n\rangle.
    \end{equation}

    We prove by recurrence on $\ell$ that for any $1\le h\le m$, any $1\le j\le m$ and any $-\infty\le k\le n$,$[\hat\uptau^\ell\partial_{u_h}, \hat\uptau^k\partial_{u_j}]\in\Gamma$. No derivative of the controls of order greater than $n-1$ appears in $\hat\uptau^k\partial_{u_j}$ for $1\le j\le m$ and $-\infty\le k\le n$, so that indeed $[\hat\uptau^{\ell}\partial_{u_\ell},\hat\uptau^k\partial_{u_j}]=0\in\Gamma$ when $\ell\le-n$. 
    
    Assume the result stands for $\ell\le s$, then by Jacobi identity:
$$   
[\hat\uptau^{s+1}\partial_{u_j},\hat\uptau^k\partial_{u_{j_2}}]=
[[\hat\uptau^{s}\partial_{u_j},\uptau],\hat\uptau^k\partial_{u_{j_2}}]=
[\hat\uptau^{s}\partial_{u_j},\hat\uptau^{k+1}\partial_{u_{j_2}}]
-[[\hat\uptau^{s}\partial_{u_j},\hat\uptau^k\partial_{u_{j_2}}],\uptau]\in\Gamma,
$$
so that it stands also for $s+1$. \ck
\end{proof}

\begin{theorem}\label{th::lin_access}
    The system \eqref{eq::system} is strongly accessible iff the extension defined by \eqref{eq::linearized} is flat.
\end{theorem}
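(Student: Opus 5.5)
The linearized system \eqref{eq::linearized} is a time-varying linear system over $\mathcal{O}(V)$. In the state $\mathrm{d}x$ and control $\mathrm{d}u$ it reads $\mathrm{d}\dot x = A\,\mathrm{d}x + B\,\mathrm{d}u$, with $A=(\partial_{x_k}f_i)$ and $B=(\partial_{u_\ell}f_i)$. Flatness of this extension over $V$ then means finding $m$ functions $y_1,\dots,y_m$, linear in $\mathrm{d}x$ and $\mathrm{d}u^{(k)}$ with coefficients in $\mathcal{O}(V)$, that generate the fibre coordinates together with their $\tilde\uptau$-derivatives. The plan is to transfer the accessibility Lie algebra of prop.~\ref{prop::accessibility} to the linear system using lemma~\ref{lem::transfer}, and then apply the classical Kalman/Brunovsky construction for linear time-varying systems.

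\emph{Step 1 (strong accessibility implies a rank condition).} By prop.~\ref{prop::accessibility} and lemma~\ref{lem::involutive}, strong accessibility means that $\Gamma$ from \eqref{eq::Gamma} spans all the $\partial_{x_i}$ and $\partial_{u_j^{(k)}}$ on a dense open set. Since $\Gamma$ is involutive, the $C^\infty$-module it generates coincides with the generated Lie algebra. Therefore the fields $\hat\uptau^k\partial_{u_j}$, $0\le k\le n$, already span $\langle\partial_{x_i}\rangle$ modulo $\langle\partial_{u_j^{(\ell)}}\rangle$. This is a linear rank condition on their $\partial_x$-coefficients, i.e.\ a time-varying Kalman-type controllability matrix built from $A$, $B$ and $\uptau$ has rank $n$.

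\emph{Step 2 (transfer to the linear system and construction of a basis).} By lemma~\ref{lem::transfer}, the fields $\hat\uptau_2^k\partial_{\mathrm{d}u_j}$ have the same coefficients, so the same controllability matrix has rank $n$ for the linear system. I would then pick a basis of the annihilator in the standard way. Choose Kronecker-type indices $\kappa_1,\dots,\kappa_m$ by selecting the first independent vectors $\hat\uptau_2^k\partial_{\mathrm{d}u_j}$, $k<\kappa_j$, and define $y_j$ as the linear forms in $\mathrm{d}x$ vanishing on all selected vectors except $\hat\uptau_2^{\kappa_j-1}\partial_{\mathrm{d}u_j}$, on which $y_j=1$. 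Then $\tilde\uptau^k y_j$ for $k<\kappa_j$ depends only on $\mathrm{d}x$, while $\tilde\uptau^{\kappa_j}y_j=\pm\mathrm{d}u_j+\cdots$. An invertibility argument then shows that $\mathrm{d}x$ and all $\mathrm{d}u^{(k)}$ are $\mathcal{O}(V)$-linear combinations of the $y_j^{(k)}$. This gives a local Lie--Bäcklund isomorphism with $V\times\mathbb{T}^m$ over $V$, i.e.\ a flat extension. The main obstacle is that the ranks must be locally constant, so the argument is only valid on an open dense set. To control this I would work near a generic point, and use analyticity to make the duality between vector fields and one-forms precise. In particular, the identity $\langle\hat\uptau_2\updelta_2,\omega\rangle = \langle\updelta_2,\tilde\uptau\omega\rangle-\tilde\uptau\langle\updelta_2,\omega\rangle$ yields the triangular structure that the inversion needs.

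\emph{Step 3 (converse).} Suppose strong accessibility fails. Then there is a nonconstant $F(x)$ with $\uptau F=0$, or equivalently the span of $\Gamma$ misses some direction. In that case $\mathrm{d}F=\sum\partial_{x_i}F\,\mathrm{d}x_i$ is a nonzero element satisfying $\tilde\uptau\,\mathrm{d}F=\mathrm{d}(\uptau F)=0$ (by prop.~\ref{prop::diff}, $\mathrm{d}$ commutes with $\tilde\uptau$). This is a nonzero torsion element, a first integral in the fibre. A trivial extension $V\times\mathbb{T}^m$ has no such element linear in the fibre coordinates, since its fibre functions are differentially free over $\mathcal{O}(V)$. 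Hence the extension is not flat.
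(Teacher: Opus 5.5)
Your forward direction is correct and takes a genuinely different route from the paper. The paper proves both directions at once through a chain of equivalences. Lemmas~\ref{lem::involutive} and~\ref{lem::transfer} carry the rank condition of prop.~\ref{prop::accessibility} over to the linearized system. The paper then quotes Fliess: the linear system is strongly accessible iff its module is torsion-free, iff it is free (finitely generated over a principal ring of differential operators), and freeness is read as flatness of the extension. You use the same transfer but replace this black box with explicit constructions. For the forward direction you build a Kalman/Brunovsk\'y basis dual to the Kronecker-selected vectors $\hat\uptau_2^k\partial_{\mathrm{d}u_j}$. This is essentially what the paper does afterwards in prop.~\ref{prop::algo}, where the $\zeta_i$ are forms annihilated by the nested distributions. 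For the converse you exhibit an explicit torsion element. The paper's proof is shorter, but it passes silently from coefficients in the fraction field of $\mathcal{O}(V)$ to analytic coefficients on a dense open set. Yours produces the flat outputs and makes the genericity (constant ranks and Kronecker indices) visible. Three small points. Index the selected vectors by $1\le k\le\kappa_j$, so that $\sum_j\kappa_j=n$ and your claimed orders come out right. $\tilde\uptau^{\kappa_j}y_j$ may also contain $\mathrm{d}u_{j'}$ with $\kappa_{j'}<\kappa_j$, so the matrix you invert is unitriangular rather than diagonal. In Step~1, the generated Lie algebra is only \emph{contained in} the span of $\Gamma$, which also contains all $\partial_{u_j^{(k)}}$; the inclusion is all you use.

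The converse needs two repairs. First, as written it rests on the claim that failure of strong accessibility gives a nonconstant $F(x)$ with $\uptau F=0$, and this is false for functions of $x$ alone. For $\dot x_1=u$, $\dot x_2=x_2$, the fields $\hat\uptau^k\partial_u$ span only $\langle\partial_u,\partial_{x_1}\rangle$, yet $u\,\partial_{x_1}F+x_2\,\partial_{x_2}F=0$ forces $F$ to be constant; the first integral is $x_2e^{-t}$. (The paper's paraphrase of Fliess et al.\ has the same defect.) You may let $F$ depend on $t$, since \eqref{eq::system} carries $t$ and $\mathrm{d}t=0$ in the tangent extension, so $\tilde\uptau\,\mathrm{d}F=\mathrm{d}(\uptau F)$ still holds. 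It is cleaner to use the rank deficiency you already have. Take a nonzero form $\omega$ in the $\mathrm{d}x_i$ annihilating every $\hat\uptau_2^k\partial_{\mathrm{d}u_j}$. Your duality identity shows that $\tilde\uptau\omega$ contains no $\mathrm{d}u$ and lies in the same annihilator, which has finite rank $d$. Hence $\sum_{k\le k_0}c_k\tilde\uptau^k\omega=0$ for some $c_k\in\mathcal{O}(V)$ with $c_{k_0}\neq0$, and $k_0\ge1$ because $\omega\neq0$.

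Second, ``no such element linear in the fibre coordinates'' is not the right invariant, because an isomorphism of extensions need not be fibrewise linear. Argue instead as follows. The isomorphism fixes $\mathcal{O}(V)$, so the image $G$ of $\omega$ is not in $\mathcal{O}(V)$. Let $w_j^{(K)}$ be the highest fibre coordinate of $\mathbb{T}^m$ on which $G$ depends. Then the $k_0$-th iterate of $\uptau+\mathrm{d}_t$ applied to $G$ is affine in $w_j^{(K+k_0)}$ with coefficient $\partial_{w_j^{(K)}}G\neq0$, while lower iterates do not involve $w_j^{(K+k_0)}$. This contradicts the transported relation, and with these repairs the proof is complete.
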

\begin{proof} By prop.~\ref{prop::accessibility}, the system is strongly accessible iff the Lie algebra generated by the $\hat\uptau^k\partial_{u_{j}}$, $1\le j\le m$, $k\in\mathbb{N}$ has dimension $n+m$. The generators of this Lie algebra are included in $\Gamma$, which is involutive by lemma~\ref{lem::involutive}, so that this Lie algebra is in fact included in $\Gamma$.

 Let $\Gamma_2:=\langle\hat\uptau_2^k\partial_{\mathrm{d}u_j}| 1\le j\le m, k\in\mathbb{Z}\rangle$. By lemma~\ref{lem::transfer}, the strong accessibility of the system~\eqref{eq::system} is equivalent to the fact that the Lie algebra generated by  $\hat\uptau_2^k\partial_{\mathbb{d}u_{j}}$, $1\le j\le m$, $k\in\mathbb{N}$ has dimension $n+m$, which is equivalent to the fact that the extension defined by \eqref{eq::linearized} is strongly accessible. 

 This is equivalent to the fact that the module defined by \eqref{eq::linearized} has no torsion element, which is equivalent to it being a free module~\cite{Fliess1990}, meaning that \eqref{eq::linearized} defines a flat diffiety extension.
\end{proof}

\section{One parameter local automorphisms}\label{sec::one_parameter}

The aim of this work is to study a special class of pseudogroups of automorphisms of a diffiety, \textit{viz.} the one parameter pseudogroups generated by special types of vector field.  In this section, we will study the basic properties of such fields.

In the sequel, for simplicity, we will restrict to systems that do not depend on the time explicitly. Necessary adaptations are easy but involve more lengthy computations.

Then the Cartan field is given by:
$$
\uptau = \sum_{i=j}^n f_j \partial_{x_j} + \sum_{k=1}^m \sum_{\ell=0}^\infty u_k^{(\ell+1)} \partial_{u_k^{(\ell)}}.
$$

\subsection{Infinitesimal symmetries of the diffiety}
\label{subsec::infinitesimal}

We are looking for \emph{infinitesimal symmetries} of a diffiety, \textit{i.e.} the vector fields that commute with the Cartan field. For such a vector field, the flow if it exists, defines a local group of Lie--B\"acklund automorphisms. In the sequel, such a vector field can also be called \emph{a Lie--B\"acklund compatible vector field}. Geometrically this means that the flow of such a vector field induces a local automorphism of the diffiety that leaves invariant the set of local integral curves of the Cartan field.  

\begin{lemma}
\label{lem::commutativity}
A vector field $\updelta = \sum_{j=1}^n a_j \partial_{x_j} + \sum_{k=1}^n \sum_{\ell=0}^\infty b_{k,\ll} \partial_{u_k^{(\ell)}}$ is an infinitesimal symmetry, \textit{i.e.} commutes with $\uptau$, if and only if:
\begin{eqnarray}
\label{eq::commutativity1}
\updelta f_j & = & \uptau a_j, \forall j \in \{1, \cdots, n\}, \\
\label{eq::commutativity2}
b_{k,\ell+1} & = & \uptau b_{k,\ell}, \forall k \in \{1, \cdots, m\}, \forall \ell \in \N.
\end{eqnarray}
\end{lemma}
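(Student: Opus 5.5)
The plan is to compute the bracket $[\updelta,\uptau]$ explicitly in the coordinates $x_j$, $u_k^{(\ell)}$ and to read off its coefficients. Both fields are derivations of $\mathcal{O}(\mathfrak{V})$. A derivation is determined by its values on the coordinate functions, since any function depends on only finitely many coordinates. So $[\updelta,\uptau]=0$ holds if and only if $[\updelta,\uptau]x_j=0$ for all $j$ and $[\updelta,\uptau]u_k^{(\ell)}=0$ for all $k,\ell$.

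The first step treats the state coordinates. We have $\uptau x_j=f_j$ and $\updelta x_j=a_j$, so
\[
[\updelta,\uptau]x_j=\updelta(\uptau x_j)-\uptau(\updelta x_j)=\updelta f_j-\uptau a_j .
\]
Its vanishing is exactly \eqref{eq::commutativity1}.

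The second step treats the control coordinates. We have $\uptau u_k^{(\ell)}=u_k^{(\ell+1)}$ and $\updelta u_k^{(\ell)}=b_{k,\ell}$, so
\[
[\updelta,\uptau]u_k^{(\ell)}=\updelta u_k^{(\ell+1)}-\uptau b_{k,\ell}=b_{k,\ell+1}-\uptau b_{k,\ell}.
\]
Its vanishing is exactly \eqref{eq::commutativity2}.

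The only point needing care is to justify that the bracket of these infinite sums is a well-defined derivation and is determined by the coordinate values. I would handle this by acting on a fixed $F\in\mathcal{O}(\mathfrak{V})$. Such an $F$ depends on finitely many coordinates, so only finitely many terms of $\updelta$ and $\uptau$ act nontrivially on $F$, $\uptau F$ and $\updelta F$. The usual finite-dimensional computation then shows that $[\updelta,\uptau]F=\sum_j([\updelta,\uptau]x_j)\,\partial_{x_j}F+\sum_{k,\ell}([\updelta,\uptau]u_k^{(\ell)})\,\partial_{u_k^{(\ell)}}F$, with only finitely many nonzero terms. Sufficiency of the two conditions follows from this formula, and necessity follows by evaluating the bracket on the coordinates.
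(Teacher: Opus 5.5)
Your proposal is correct and follows the same route as the paper. The paper writes out $[\updelta,\uptau]$ in the coordinates $x_j$, $u_k^{(\ell)}$ and reads off that its coefficients are $\updelta f_j-\uptau a_j$ and $b_{k,\ell+1}-\uptau b_{k,\ell}$, which is exactly what you obtain by evaluating the bracket on the coordinate functions. Your extra step, checking that the bracket acts through only finitely many terms on any $F\in\mathcal{O}(\mathfrak{V})$, makes explicit the cancellation of second-order terms that the paper dismisses with ``the further terms cancel each other''.
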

\begin{proof} The Lie Bracket with $\uptau$ reads to:
$$
[\updelta,\uptau] = \left ( \sum_{j=1}^n (\updelta f_j) \partial_{x_j} + \sum_{k=1}^m \sum_{\ell=0}^\infty (\updelta u_k^{(\ell+1)}) \partial_{u_k^{(\ell)}} \right ) - 
\left ( \sum_{j=1}^n (\uptau a_j) \partial_{x_j} + \sum_{k=1}^m \sum_{\ell=0}^\infty (\uptau b_{k,\ell}) \partial_{u_k^{(\ell)}}   \right ). 
$$

Indeed the further terms cancel each other. This expression vanishes if, and only if, we have: $\updelta f_j = \uptau a_j$ for all $j$ and $\updelta u_k^{(\ell+1)} = b_{k,\ell+1} = \uptau b_{k,\ell}$.
\end{proof}

Therefore finding such a vector field is equivalent to solving equations~\eqref{eq::commutativity1} and ~\eqref{eq::commutativity2}: 
$$
\updelta f_j =  \uptau a_j, \forall j \in \{1, \cdots, n\},
$$
that define a diffiety extension equivalent to the linearized system~\eqref{eq::linearized}.
The functions $b_{k,0}$ are arbitrary and by derivation determine the functions $b_{k,\ell}$ for $\ell > 0$ inductively through  equations~\eqref{eq::commutativity2}.   

Eventually, this system has $n$ equations and $n+m$ unknowns which are the functions $a_j,b_{k,0}$ with $1 \leq j \leq n$ and $1 \leq k \leq m$. The fact the system is underdetermined allows to find infinite spaces of solutions in general. See section~\ref{sec::parametrization} for details. 

\begin{remark}\label{rem::controls}
For simplicity, we may reduce to the case $u_i=\dot{x}_{i}$ for $1 \leq i \leq m$, up to a permutation of indices,
and
consider a system 
    \begin{equation}
        \label{eq::sysbis}
        x_i=f_i(x,\dot{x}_{1}, \ldots, \dot{x}_{m},t), \text{ for } m<i\le n,
    \end{equation}
    defining a diffiety $V$. With this notation, we have $b_{i,k} = a_i^{(k+1)}$ and any infinitesimal symmetry $\updelta$ can be written $\updelta=\sum_{i=1}^{m} \sum_{k\in\mathbb{N}}a_i^{(k)}\partial_{x_i^{(k)}}+\sum_{k=m+1}^{n} a_i\partial_{x_i}$.
\end{remark}
We will make this assumption in the sequel, as it allows to reduce to solving only equations~\eqref{eq::commutativity1}.

\subsection{Integrable infinitesimal symmetries and flow}\label{subsec::int_sym}

We need to consider infinitesimal symmetries with a special property, ``integrability'', that will allow us in the next section to associate to them a one parameter pseudogroup of diffiety morphisms.

\begin{definition}
\label{def::integrability}
An infinitesimal symmetry $\updelta$ on the diffiety $\mathfrak{X}$ is said to be integrable if $e := \max\{\max_j \max_\ell \ord_\uptau \, \updelta^\ell x_j, \max_k \max_\ell \ord_\uptau \, \updelta^\ell u_k \}< \infty$. Here the order $\ord_\uptau$ of a variable is defined as the highest order of derivative with respect to $\uptau$ that appears in its expression. 

The \emph{order of} $\updelta$ is $\ord\updelta:=\max_{i=1}^{n}\ord_{\uptau}a_i=\updelta x_i$ and we call $e$ the \emph{iterated order} of $\updelta$.
\end{definition}

We will show that this integrability condition implies the existence of a one parameter pseudogroup defined on a dense open subset $\mathfrak{U} \subset \mathfrak{X}$ of the diffiety.
The existence of such a pseudogroup implies in turn the existence of an integral curve passing through all points in $\mathfrak{U}$.

\begin{remark}
   Note that using smooth functions, the existence of integral curves is only a necessary condition of integrability and not a sufficient one. Indeed, Peano~\cite[p.~\textsc{xviii}]{Genocchi1884} and Borel~\cite{Borel1995} have shown the existence of a smooth function that admits any given Taylor series at the origin, so that the non integrable field $\sum_{k\in\mathbb{N}} u^{(k+1)}\partial_{u^{(k)}}$ admits a smooth integral curve. (See also Hörmander~\cite[p.~16]{Hormander1990}). In the analytic case, the existence of an analytic integral curve obviously depends on the convergence of the series. 
\end{remark}

We need first a preliminary lemma, showing that we can associate to integrable symmetries spaces of finite dimension.

\begin{lemma}
\label{lem::integrability}
Let the vector field $\updelta$ be integrable in the sense of definition~\ref{def::integrability}. 

i) There exists a dense open set $\mathfrak{U} \subset \mathfrak{X}$ and natural numbers $r_1, \ldots, r_n$ such that in the neighborhood of any point $\eta\in\mathfrak{U}$, there exist analytic functions $\phi_{\eta,i}$, for $1\le i\le n$, such that:

{\leftskip=1cm
    
    \vphantom{a}\llap{a)} the set of variables $Y:=\{\updelta^k x_i|1\le i\le n,\> 0\le k<r_i\}$ is functionally independent;

    \vphantom{a}\llap{b)} and we have
\begin{equation}
\label{eq::integrability}
\updelta^{r_i} x_i = \phi_{\eta,i}(Y), \mbox{ for } 1 \leq i \leq n.
\end{equation}

}

ii) For any function $F\in\mathcal{O}(\mathfrak{X})$, there exist a dense open set $U_F$ and $r_F\in\mathbb{N}$, such that in the neighborhood of any point $\eta\in U_F$, there exists an analytic functions $\phi_{F,\eta}$,  such that:

{\leftskip=1cm
    
    \vphantom{a}\llap{a)} the set of variables $Z:=\{z_{k}:=\updelta^k F| 0\le k<r_F\}$, is functionally independent;

    \vphantom{a}\llap{b)} and we have
\begin{equation}
\label{eq::integrability2}
\updelta^{r_F} F = \phi_{F,\eta}(Z).
\end{equation}
}
\end{lemma}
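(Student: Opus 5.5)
The key fact is that integrability confines every iterate $\updelta^\ell x_j$ and $\updelta^\ell u_k$ to the finite-dimensional jet space $\mathfrak{X}^{[e]}$ of Remark~\ref{rem::topology}, where $e$ is the iterated order of Definition~\ref{def::integrability}. All the functions $\updelta^k x_i$ (and $\updelta^k F$) are therefore analytic functions of finitely many coordinates, say $N$ of them. An infinite family of analytic functions of $N$ variables has generic rank at most $N$. This forces the chain of iterates to stabilize, and the analytic implicit function theorem then supplies the relations~\eqref{eq::integrability} and~\eqref{eq::integrability2}.

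I would prove ii) first, since i) is essentially ii) applied coordinatewise. Let $F\in\mathcal{O}(\mathfrak{X})$.

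\textbf{Step 1: the iterates of $F$ live on a finite jet space.} Each $\updelta^\ell x_j$ and $\updelta^\ell u_k$ has order $\le e$, and $\updelta$ acts on functions through the coefficients $\updelta x_j$ and $\updelta u_k^{(s)}=\uptau^s\updelta u_k$. A priori the derivatives of $u_k^{(s)}$ could raise the order. To control this, I would use that $\updelta$ commutes with $\uptau$, so $\updelta^\ell u_k^{(s)}=\uptau^s\updelta^\ell u_k$ has order $\le e+s$. Hence if $F$ has order $\le s_0$, every $\updelta^k F$ has order $\le e+s_0$, since it is a polynomial expression in derivatives of $F$ and the iterates $\updelta^\ell$ of coordinates of order $\le s_0$. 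So all $z_k=\updelta^kF$ are analytic on $\mathfrak{X}^{[e+s_0]}$, of finite dimension $N$.

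\textbf{Step 2: stabilization of the rank.} Let $\rho_k$ be the generic rank of $\mathrm{d}z_0,\dots,\mathrm{d}z_{k-1}$. This is nondecreasing in $k$ and bounded by $N$. I claim that once $\mathrm{d}z_k$ lies generically in the span of $\mathrm{d}z_0,\dots,\mathrm{d}z_{k-1}$, the same holds for all later $k$. To see this, work on the open set where the rank is maximal and locally constant, and use analytic continuation. There $z_k=\phi(z_0,\dots,z_{k-1})$ locally. Applying $\updelta$ gives $z_{k+1}=\sum_i\partial_i\phi\,z_{i+1}$, which depends only on $z_1,\dots,z_k$, and hence only on $z_0,\dots,z_{k-1}$.

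Take $r_F$ to be the first such $k$; it is at most $N$. Then $z_0,\dots,z_{r_F-1}$ are functionally independent on the dense open set $U_F$ where their Jacobian has full rank. This set is the complement of an analytic subset, and it is dense because analytic functions that vanish on an open set vanish identically on a connected set. On $U_F$, a local rank-theorem argument gives $\updelta^{r_F}F=\phi_{F,\eta}(Z)$, which is~\eqref{eq::integrability2}.

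\textbf{Step 3: part i).} Choose $r_1,\dots,r_n$ greedily. Let $r_1$ be the stabilization index for $x_1$. Then, for each $i$, let $r_i$ be the first $k$ at which $\mathrm{d}\updelta^kx_i$ is generically dependent on the previously selected variables together with $\updelta^0x_i,\dots,\updelta^{k-1}x_i$. The same propagation argument as in Step 2 shows this dependence persists for all larger $k$, since $\updelta$ maps the selected set $Y$ into the span of $Y$ and the next iterates. Intersecting the finitely many dense open sets where the relevant Jacobians have full rank gives $\mathfrak{U}$. Properties a) and b) then follow from the implicit function theorem. Relations for the $u_k$ are not required in i), since by Remark~\ref{rem::controls} they are among the $x_i$.

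\textbf{Expected obstacle.} The main difficulty is Step 1: showing that the order stays bounded even though $\updelta$ contains the infinitely many terms $\updelta u_k^{(s)}\partial_{u_k^{(s)}}$, and making the genericity and density claims rigorous on the denumerable-dimensional $\mathfrak{X}$. I would handle both by projecting to $\mathfrak{X}^{[e+s_0]}$, using the topology of Definition~\ref{def::diffiety}, and working with analytic sets there.
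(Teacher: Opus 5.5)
Your proposal is correct and follows essentially the paper's route. The bounded iterated order $e$ confines all iterates to a finite jet space, so the generic rank of their Jacobian is bounded. One then selects a staircase set $Y$ whose next iterates add no rank, and the implicit function (rank) theorem on the dense open set of maximal rank yields the relations.

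Two of your steps spell out what the paper leaves implicit. Your Step~1 bound $\ord \updelta^k F\le e+s_0$, obtained from $\updelta^\ell u_k^{(s)}=\uptau^s\updelta^\ell u_k$, justifies the paper's ``exactly similar'' for ii). Your greedy choice of the $r_i$, which guarantees that $Y$ is independent, replaces the paper's choice of $Y$ ``minimal for inclusion'' in i).
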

\begin{proof}
i) The integrability condition~\ref{def::integrability} implies that the maximal order $e$ of derivatives appearing in the $\updelta^k x_i$ is finite. These derivatives belong to the set $\Upsilon_e:=\{x_i^{(k)}| 1\le i\le m,\> 0\le k\le e\}\cup \{x_i|m<i\le n\}$. The generic rank of the Jacobian matrix 
$$
\left(\>\>\frac{\partial \updelta^k x_i}{\partial\upsilon}\>\>\Bigg|
\begin{array}{l}\scriptstyle 1\le i\le n\\ 
\scriptstyle 0\le k\le n+m(e+1)+1\\
\scriptstyle \upsilon\in\Upsilon_e\end{array}\right)
$$ is at most $\rho\le n+m(e+1)$. So, there exists sets of derivatives $Y=\{\updelta^k x_i|1\le i\le n,\> 0\le k<r_i\}$, with $0<r_i\le n+m(e+1)+1$ and $Y_j=\{\updelta^k x_i|1\le i\le n,\> 0\le k<r_i\>\hbox{for}\> i\neq j,\> k\le r_i\>\hbox{for}\> i=j\}$, such that
$$
\rank \left(\>\>\frac{\partial y}{\partial\upsilon}\>\>\Bigg|
\begin{array}{l}y\in Y_i\\ 
\upsilon\in\Upsilon_e\end{array}\right)=
\rank \left(\>\>\frac{\partial y}{\partial\upsilon}\>\>\Bigg|
\begin{array}{l}y\in Y\\ 
\upsilon\in\Upsilon_e\end{array}\right),\>\hbox{for all}\> 
1\le i\le n.
$$
One may choose such a set $Y$ that is minimal for inclusion. Let then $\mathfrak{U}$ be the open set of points $\eta$ where the rank of this matrix is equal to $e$. 

For any point $\eta\in\mathfrak{U}$, there exists a set of derivatives $\hat\Upsilon\subset\Upsilon_e$, with $\sharp \hat\Upsilon=\sharp Y=\rho$, such that 
$$
\left|\>\>
\frac{\partial y}{\partial\upsilon}\>\>\bigg|
\begin{array}{l}y\in Y\\ 
\upsilon\in\hat\Upsilon\end{array}
\right|
$$ does not vanish at $\eta$. By the implicit function theorem~\cite[p.~138]{Cartan1961}, one may find functions $F_\upsilon$, such that $\upsilon=F_\upsilon(y\in Y, \tilde\upsilon\in\Upsilon_e\setminus\hat\Upsilon)$ for all $\upsilon\in\hat\Upsilon$. Then $\updelta^{r_i}x_i$ is a function $G_i(\upsilon\in\Upsilon_e)$  by the integrability hypothesis. We may write $\updelta^{r_i}x_i=G_i(F_\upsilon(y\in Y;\upsilon\in\Upsilon_e\setminus\hat\Upsilon))|\upsilon\in\hat\Upsilon; \upsilon\in\Upsilon_e\setminus\hat\Upsilon)$. Assume this last expression actually depends on some $\upsilon_0\in\Upsilon_e\setminus\hat\Upsilon$, then the rank of 
$$
\left(\>\>\frac{\partial y}{\partial\upsilon}\>\>\Bigg|
\begin{array}{l}y\in Y_i\\ 
\upsilon\in\Upsilon_e\end{array}\right)
$$
would be strictly greater than $\rho$, a contradiction. So that 
$\updelta^{r_i}x_i=\phi_{\eta,i}(y\in Y):=G_i(F_\upsilon(y\in Y);\upsilon\in\Upsilon_e\setminus\hat\Upsilon)|\upsilon\in\hat\Upsilon; \upsilon\in\Upsilon_e\setminus\hat\Upsilon)$, which completes the proof.

ii) The proof is exactly similar.
\end{proof}

By lem.~\ref{lem::integrability}, one may consider the action of $\updelta$ on the projection $\mathfrak{U}_0$ of the open set $\mathfrak{U}$ on the finite space corresponding to the coordinate functions $Y$. In the same way, for any function $F$ on the diffiety, we can also consider the finite space defined by its successive $\updelta$ derivatives. On this space of finite dimension, one is able to use classical results on the flow of a vector field (see \cite[p.~61]{Krantz2002}, \cite[p.~268]{Kobayashi1963} or \cite[p.~50]{Narasimhan1985} for more details).

\begin{remark}\label{rem::g}
    In the sequel $g_F$ denotes a function $\mathbb{R}\mapsto \mathcal{O}(\mathfrak{U})$, so that $g_F(s)(\eta)$ for $(s,\eta)\in\mathbb{R}\times\mathfrak{U}$ makes sense. We will rather use the notation $g_F(s,\eta)$ in such a case for better clarity.
\end{remark}

\begin{lemma}\label{lem::gamma}
    i) For any point $\hat y\in\mathfrak{Y}_0$ there exists a unique solution $\gamma_{\hat y}:]\epsilon_{-}(\hat y),\epsilon_{+}(\hat y)[\mapsto\mathfrak{Y}_0$ of the system
\begin{equation}
\label{eq::sys_gamma}
\begin{array}{rll}
\frac{\partial \gamma_{\hat y,i,k}}{\partial s}(s)&= \gamma_{\hat y,i,k+1}(s)&\hbox{if}\> k<r_i-1 \\
     & = \phi_{\hat y,i}(\gamma_{\hat y}(s))&\hbox{if}\> k=r_i-1, 
\end{array}
\end{equation}
with initial conditions
\begin{equation}
\label{eq::initial_cond}
\gamma_{\hat y,i,k} = \updelta^k x_i(\hat y),\>\hbox{for}\> 1\le i\le n \>\hbox{and}\> 0\le r< r_i,   
\end{equation}
and the interval $]\epsilon_{-}(\hat y),\epsilon_{+}(\hat y)[$ is maximal with this property.

ii) Furthermore, the function $(\hat y,s)\mapsto\gamma_{\hat y,i,k}(s)$ is an analytic function
from an open set of $\mathfrak{Y}_0 \times \R$. 

iii) For any function $F\in\mathcal{O}(\mathfrak{X})$, there exists an open subset $U_F\subset\mathfrak{X}$, such that for any point $\eta\in U_F$, there exists a locally unique analytic function $g_{F}: U_F\times\mathbb{R}\mapsto \mathcal{O}(\mathfrak{X})$, $s\mapsto g_{F}(s)$, defined in a non empty open neighborhood $U_{F,\eta}$ of $\eta\in U_F$ for all $s\in I(F,\eta)$, with $0\in I(F,\eta)\subset\mathbb{R}$ and which is solution of 
\begin{equation}\label{eq::F_delta}
\begin{array}{ll}
    \frac{\partial g_{F}}{\partial s}&=\updelta g_{F}(s)\\
    g_F(0)&=F.
\end{array}
\end{equation}

iv) If $s_1+s_2\in I$, then $g_{F}(s_1+s_2)=g_{g_F(s_1)}(s_2)$.

v) We have 
\begin{equation}\label{eq::F_tau}
    \uptau g_{F}(s)= g_{\uptau F}(s).
\end{equation}
vi) We may identify $\eta\mapsto \gamma_{\pi\eta,i,0}$ with $g_{x_i}(\cdot,\eta)$, defined on $\mathfrak{U}$, and $g_{x_i^{(k)}}=\uptau^k g_{x_i}$ is such that $g_{x_i^{(k)}}(\cdot,\eta)$ can also be defined for all $s\in ]\epsilon_{-}(\pi(\eta)),\epsilon_{+}(\pi(\eta))[$. With these notations, if $F$ depends on the derivatives $\upsilon\in\Upsilon_0$, with $\Upsilon_0$ finite, 
\begin{equation}\label{eq::composition1}
    g_{F}(\cdot,\cdot)=F(g_{\upsilon}(\cdot,\cdot)|\upsilon\in\Upsilon_0)
\end{equation}
and
\begin{equation}\label{eq::composition2}
    g_{F}(s_1+s_2,\cdot)=g_{F}(s_1,g_{\upsilon}(s_2,\cdot)|\upsilon\in\Upsilon_0).
\end{equation}
\end{lemma}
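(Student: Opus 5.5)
The plan is to reduce everything to classical finite-dimensional ODE theory on the space $\mathfrak{Y}_0$ of the coordinates $Y$ given by lemma~\ref{lem::integrability}, and then transport the results to arbitrary functions using the fact that $\updelta$ commutes with $\uptau$.

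For i) and ii), system~\eqref{eq::sys_gamma} is a first order analytic system $\partial_s\gamma=\Phi(\gamma)$ on an open subset of $\mathbb{R}^{\sum r_i}$. Its right-hand side is the restriction of $\updelta$ to $\mathfrak{Y}_0$: by a) and b) of lemma~\ref{lem::integrability}, $\updelta$ maps each coordinate $\updelta^kx_i$ ($k<r_i$) to $\updelta^{k+1}x_i$ or to $\phi_{\eta,i}(Y)$. Cauchy--Lipschitz gives existence, uniqueness and a maximal interval $]\epsilon_-,\epsilon_+[$. Cauchy's analytic existence theorem, or analytic dependence on initial conditions (\cite[p.~61]{Krantz2002}, \cite{Narasimhan1985}), gives analyticity of $(\hat y,s)\mapsto\gamma_{\hat y}(s)$ on an open set. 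The only care needed is that the $\phi_{\eta,i}$ are local. By uniqueness of the implicit functions, they glue on overlaps, so the vector field is well defined on $\mathfrak{Y}_0$.

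For iii), lemma~\ref{lem::integrability} ii) applies to $F$ and gives coordinates $Z$ with $\updelta^{r_F}F=\phi_{F,\eta}(Z)$. The same argument yields a flow on the $Z$-space, and $g_F(s):=z_0(s)$ solves~\eqref{eq::F_delta}. Local uniqueness holds because any analytic solution has the forced Taylor expansion $\sum s^k\updelta^kF/k!$. Equivalently, any analytic solution, differentiated in $s$, reproduces the closed system on $Z$. Property iv) is then the usual group property of flows of autonomous systems, obtained from uniqueness applied to $s_2\mapsto g_F(s_1+s_2)$.

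The key step is v). Set $h(s)=\uptau g_F(s)$. Since $[\updelta,\uptau]=0$, we get $\partial_sh=\uptau\updelta g_F=\updelta\uptau g_F=\updelta h$, and $h(0)=\uptau F$. So $h$ solves~\eqref{eq::F_delta} for $\uptau F$, and uniqueness in iii) gives $h=g_{\uptau F}$. The delicate point, which I expect to be the main obstacle, is justifying that $\uptau$ commutes with $\partial_s$ and that $h$ lies in the class where uniqueness holds. I would handle this through the power series characterization $g_F(s)=\sum_k s^k\updelta^kF/k!$, valid near $s=0$ by analyticity, where termwise application of $\uptau$ is legitimate because $\uptau$ acts on finitely many variables with analytic coefficients and the series converges locally uniformly with derivatives. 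I would then extend the identity to the whole interval by analytic continuation in $s$.

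For vi), $\eta\mapsto\gamma_{\pi\eta,i,0}$ solves~\eqref{eq::F_delta} with $F=x_i$, so it equals $g_{x_i}$ by uniqueness. Iterating v) gives $g_{x_i^{(k)}}=\uptau^kg_{x_i}$, defined on the same interval because $\uptau^k$ only differentiates in $\eta$. For~\eqref{eq::composition1}, the function $G(s)=F(g_\upsilon(s)\mid\upsilon\in\Upsilon_0)$ satisfies $\partial_sG=\sum\partial_\upsilon F\,\updelta g_\upsilon$. Because $\updelta$ is a derivation acting on the composite through the flow, this equals $\updelta G$. I would check this again via Taylor series at $s=0$, where it reduces to the Leibniz rule for $\updelta^k$ on compositions. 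Since $G(0)=F$, uniqueness gives $G=g_F$. Finally,~\eqref{eq::composition2} follows by combining iv) with~\eqref{eq::composition1}.
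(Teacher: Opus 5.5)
Your proposal is correct and follows the paper's proof essentially step by step. You use finite-dimensional ODE theory on the $Y$- and $Z$-coordinates for i)--iii), and then uniqueness applied to $s\mapsto g_F(s_1+s)$, to $\uptau g_F$, and, via the chain rule, to $F(g_\upsilon\mid\upsilon\in\Upsilon_0)$ for iv)--vi). Your Taylor-series characterization $g_F(s)=\sum_k s^k\updelta^kF/k!$ is a worthwhile addition: it gives uniqueness for every analytic solution of $\partial_s g=\updelta g$, which is what v) and vi) actually need and which the paper only asserts.

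One correction: agreement on overlaps does not make the $\phi_{\eta,i}$ single-valued on $\mathfrak{Y}_0$. Take $\mathbb{T}^2$ with $\updelta x_1=1$ and $\updelta x_2=\dot x_1^2+x_1\dot x_1$. Then $\updelta^2x_2=\dot x_1=\bigl(-x_1\pm\sqrt{x_1^2+4\updelta x_2}\bigr)/2$, and the sign depends on whether $2\dot x_1+x_1$ is positive or negative, i.e.\ on which sheet of $\mathfrak{U}$ lies over a given point of $Y$-space. So you should keep the $\phi_{\hat y,i}$ local, attached to a point of $\mathfrak{U}$, as the paper does.
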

\begin{proof}
    i) The existence of a $\cal{C}^\infty$ solution on a maximal interval is a consequence of the elementary theory of differential equations. 
    
    ii) The fact that this solution is analytic is a consequence of Cauchy -- Kovalevskaya theorem~\cite{Kovalevskaya1875}. We refer to~\cite[p.~50, corollary 1.8.11]{Narasimhan1985} for more details.

    For iii), we proceed in the same way, using lem.~\ref{lem::integrability}~ii). Indeed, using the notations of the lemma, we only have to solve the system
\begin{equation}
\label{eq::sys_g}
\begin{array}{rll}
\frac{\partial g_k}{\partial s}(s,\cdot)&= g_{k+1}(s,\cdot)&\hbox{if}\> k<r_F-1 \\
     & = \phi(g_{z_0}(s,\cdot), \ldots, z_{r_F}(s,\cdot))&\hbox{if}\> k=r_F-1, 
\end{array}
\end{equation}
with initial conditions
\begin{equation}
\label{eq::initial_cond_g}
g_{z_k}(0,\cdot) = \updelta^kF(\cdot),\>\hbox{for}\> 0\le k<r_F-1.   
\end{equation}
As for ii), such a system admits a locally unique analytic solution, in $s$ and the $z_k$, by the Cauchy -- Kovalevskaya theorem.

iv) Consider $\hat g(s,\cdot):=g_F(s_1+s,\cdot)$. We have $\hat g(0,\cdot)=g_F(s_1,\cdot)$ and $\partial_s \hat g(s,\cdot)=\updelta \hat g(s,\cdot)$, a system that locally defines $s\mapsto g_{g_F(s_1+s,\cdot)}$, so that $g_{g_F(s_1)}(s_2,\cdot)=\hat g(s_2,\cdot):=g_F(s_1+s_2,\cdot)$. 

v) By iii), we have
\begin{equation}\label{eq::p_delta}
    \partial_s (\uptau g_F(s)-g_{\updelta F}(s)) =\updelta (\uptau g_F(s)-g_{\updelta F}(s))
\end{equation}
and $\uptau g_F(0)-g_{\updelta F}(0)=0$. The only local analytic solution of eq.~\ref{eq::p_delta} with initial condition $0$ is $\uptau g_F(s)-g_{\updelta F}(s)=0$, hence the result.

vi) The identification is clear by iii) as $\gamma_{\pi\eta,i,0}$ is solution of $\partial_s \gamma_{\pi\eta,i,0}=\updelta \gamma_{\pi\eta,i,0}$ with $\gamma_{\pi\eta,i,0}(0)=x_i$.
The functions $g_{x_i^{(k)}}$ as defined in lem.~\ref{lem::gamma}~vi) can be extended, fixing $\eta$, by analytic prolongation with respect to $s$, so that they are all defined on the same maximal interval $]\epsilon_{-}(\pi\eta),\epsilon_{+}(\pi\eta)[$ as the $\gamma_{\pi\eta,i,k}$.

In the same way, we have
$$
\frac{\partial F(g_{\upsilon}(s))}{s}
=\sum_{\upsilon\in\Upsilon_0} 
  \frac{\partial F}{\partial\upsilon} 
  \frac{\partial g_{\upsilon}(s)}{\partial s}
= \sum_{\upsilon\in\Upsilon_0} 
  \frac{\partial F}{\partial\upsilon} 
  \updelta g_{\upsilon}(s)
= \updelta F(g_{\upsilon}(s))
$$
and $F(g_{\upsilon}(0))=F$, so that \eqref{eq::composition1} is a consequence of iii). Then \eqref{eq::composition2} is a consequence of \eqref{eq::composition1} and iv).
\end{proof}

We are then able to build the flow mappings that will belong to the pseudogroup to be defined in the next section, with the following definition.

\begin{definition}\label{def::flow_mapping}
    Let $U\subset\mathfrak{U}$ be an open subset and $U_0\subset\mathfrak{U}_0=\pi(U)$ be its projection on the space of coordinates $Y$. We assume first that $U=\pi^{-1}(U_0)$.

    Let $I(U_0)=\bigcap_{y\in U_0}]\epsilon_{-}(y),\epsilon_{+}(y)[$. 
    For any $s\in I(U_0)$, we can define $\mathfrak{g}_U(s):U_0\mapsto\mathfrak{U}$ by setting in the coordinates $x_i^{(k)}$:
\begin{equation}\label{eq::flow}
\begin{array}{lll}
 \mathfrak{g}_U(s):&U&\mapsto \mathfrak{U}  \\
     & (x_i^{(k)}| {\scriptstyle 1\le i\le m;\> k\in\mathbb{N}\>\hbox{and}\>
                   m<i\le n;\> k=0})
     &\mapsto
     (g_{x_i^{(k)}}(s)| {\scriptstyle 1\le i\le m;\> k\in\mathbb{N}\>\hbox{and}\>
                   m<i\le n;\> k=0}).
\end{array}   
\end{equation}

If $U\neq\pi^{-1}\circ\pi(U)$, we may find thanks to rem.~\ref{rem::topology} $r\in\mathbb{N}$ so that $U=\pi_r^{-1}\circ\pi_r (U)$. We only need to consider then the greatest $\epsilon_{+}(\eta)$ and the smallest $\epsilon_{-}(\eta)$ such that the $g_{x_i^{(k)}}(s)$ are defined for all $k\le r$ and all $s\in]\epsilon_{-}(\eta),\epsilon_{+}(\eta)[$. It is then enought to define $I(U):=\bigcap_{\eta\in U} ]\epsilon_{-}(\eta),\epsilon_{+}(\eta)[$.
\end{definition}
We notice that, for any non empty open subset $u\subset\mathfrak{U}$, $0\in I(U)$ and $\mathfrak{g}_U(0)=\mathrm{Id}_{|U}$.

Let us show first that this defines diffiety morphisms.

\begin{theorem}\label{th::diff_morphism}
The mapping $\mathfrak{g}_{U}$ is a diffiety morphism.
\end{theorem}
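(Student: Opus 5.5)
The plan is to verify directly the defining identity of Definition~\ref{def::diffiety}, namely $\uptau\circ\mathfrak{g}_U(s)^{\ast}=\mathfrak{g}_U(s)^{\ast}\circ\uptau$ on $\mathcal{O}(\mathfrak{U})$, for every fixed $s\in I(U)$. By \eqref{eq::flow}, the pull-back of a coordinate function is $\mathfrak{g}_U(s)^{\ast}x_i^{(k)}=g_{x_i^{(k)}}(s)$. By \eqref{eq::composition1} of lem.~\ref{lem::gamma}~vi), the pull-back of an arbitrary $F\in\mathcal{O}(\mathfrak{U})$, depending on a finite set $\Upsilon_0$ of coordinates, is then $\mathfrak{g}_U(s)^{\ast}F=F(g_\upsilon(s)\,|\,\upsilon\in\Upsilon_0)=g_F(s)$. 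So the statement reduces to the identity $\uptau g_F(s)=g_{\uptau F}(s)$ for all $F$. This is exactly \eqref{eq::F_tau} of lem.~\ref{lem::gamma}~v).

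The key steps, in order, are as follows.

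\textbf{Step 1.} Check that $\mathfrak{g}_U(s)^{\ast}$ is well defined as a ring morphism $\mathcal{O}(\mathfrak{U})\to\mathcal{O}(U)$. Every $F$ depends on finitely many coordinates. Each $g_{x_i^{(k)}}(s)=\uptau^k g_{x_i}(s)$ depends, by lem.~\ref{lem::integrability} and the integrability hypothesis, only on finitely many coordinates, of order bounded by $k+e$. Hence the composite is again a function of finitely many coordinates, analytic on $U$. That it respects sums and products follows from the composition formula \eqref{eq::composition1}.

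\textbf{Step 2.} Check the commutation on the coordinate functions. For $1\le i\le m$,
$$\uptau\,\mathfrak{g}_U(s)^{\ast}x_i^{(k)}=\uptau^{k+1}g_{x_i}(s)=g_{x_i^{(k+1)}}(s)=\mathfrak{g}_U(s)^{\ast}\uptau x_i^{(k)}.$$
For $m<i\le n$ we have $\uptau x_i=f_i$, so we need $\uptau g_{x_i}(s)=f_i(g_\upsilon(s))=g_{f_i}(s)$. This is again \eqref{eq::F_tau} combined with \eqref{eq::composition1}.

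\textbf{Step 3.} Extend to general $F$ by the chain rule. Both $\uptau\circ\mathfrak{g}_U(s)^{\ast}$ and $\mathfrak{g}_U(s)^{\ast}\circ\uptau$ are derivations along the ring morphism $\mathfrak{g}_U(s)^{\ast}$, so they agree everywhere once they agree on the coordinate generators. This settles the case of finitely many coordinates, which is all the ring $\mathcal{O}$ allows.

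The main obstacle is the domain bookkeeping in lem.~\ref{lem::gamma}~v). That identity is proved by uniqueness for the Cauchy problem $\partial_s h=\updelta h$, $h(0)=0$, but only locally in $s$, on the neighbourhoods $U_{F,\eta}$ and intervals $I(F,\eta)$. We need it on all of $I(U)$, where the $g_{x_i^{(k)}}$ have been extended by analytic prolongation (lem.~\ref{lem::gamma}~vi) and Definition~\ref{def::flow_mapping}). I would handle this as follows. For fixed $\eta\in U$, the difference $s\mapsto \uptau g_F(s,\eta)-g_{\uptau F}(s,\eta)$ is analytic on the interval $]\epsilon_-(\eta),\epsilon_+(\eta)[$, which is connected and contains $0$. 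It vanishes on a neighbourhood of $0$ by the local uniqueness argument, so by the identity theorem for real analytic functions it vanishes on the whole interval. One must also make sure that $\uptau$, which differentiates in the $\eta$ variables, commutes with this continuation. This holds because the solutions are jointly analytic in $(\eta,s)$ by lem.~\ref{lem::gamma}~ii). It is also where the choice of $r$ with $U=\pi_r^{-1}\pi_r(U)$ is used: it ensures that the $\epsilon_\pm$ used for the coordinates appearing in $\uptau F$ are covered by $I(U)$.
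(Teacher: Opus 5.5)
Your proposal is correct and follows the paper's proof. The paper likewise expands $\uptau\,\mathfrak{g}_U(s)^{\ast}F$ by the chain rule over the finitely many coordinates $\upsilon\in\Upsilon_0$ on which $F$ depends, replaces each $\uptau g_{\upsilon}(s)$ by $g_{\upsilon'}(s)$ using lem.~\ref{lem::gamma}~v), and recombines the sum into $\mathfrak{g}_U(s)^{\ast}(\uptau F)$; this is exactly your Steps~2--3.

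Your extra step, extending \eqref{eq::F_tau} from a neighbourhood of $s=0$ to all of $I(U)$ by the identity theorem in $s$ together with joint analyticity, addresses a point the paper leaves implicit, and it is a sound way to close it.
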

\begin{proof} By lem.~\ref{lem::gamma} v), assuming that $F$ depends on a finite set of derivatives $\Upsilon_0$, we have for any $F\in\mathcal{0}(\mathfrak{U})$:
$$
\uptau \mathfrak{g}_{U}(s)^{\ast}F
=\sum_{\upsilon\in\Upsilon_0} 
  \frac{\partial F}{\partial\upsilon} 
  \uptau g_{\upsilon}(s)
= \sum_{\upsilon\in\Upsilon_0} 
  \frac{\partial F}{\partial\upsilon} 
  g_{\upsilon'}(s)
= \mathfrak{g}_U(s)^{\ast}(\updelta F),
$$
so that $\mathfrak{g}_U$ is compatible with $\uptau$ as in def.~\ref{def::diffiety}.
\end{proof}

\subsection{The pseudogroup defined by an integrable infinitesimal symmetry}\label{subsec::pseudogroup}

We now need now prove that this set of morphisms forms a one parameter pseudogroup, according to the following definition. See e.g.\ Kobayashi and Nomizu~\cite[p.~1--2]{Kobayashi1963} or Evtushik \cite[def.~5]{Evtushik1999} for a general definition.

\begin{definition}\label{def::pseudogroup}
  i) A pseudogroup acting on a diffiety $\mathfrak{V}$ is a set $\mathfrak{G}$ of diffiety isomorphisms between open subsets of $\mathfrak{V}$, such that:
  
{\leftskip=1cm
    
    \vphantom{a}\llap{a)} \emph{Cover}. --- For any point $\eta\in \mathfrak{V}$ there exists $\mathfrak{g}\in\mathfrak{G}$ such that $\mathfrak{g}$ is defined on $W\ni\eta$;
    
    \vphantom{a}\llap{b)} \emph{Restriction}. --- The restriction of an element g in G to any open set contained in its domain is also in G;
    
    \vphantom{a}\llap{c)} \emph{Composition}. --- The composition of two elements $\mathfrak{g_1},\mathfrak{g_2}\in\mathfrak{G}$, such that $\mathrm{Im}\mathfrak{g_1}\cap\mathrm{Dom}\mathfrak{g_2}\neq\emptyset$, is in $\mathfrak{G}$;

    \vphantom{a}\llap{d)} \emph{Neutral element}. --- For every open set $U$, $\mathrm{Id}_{|U}\in\mathfrak{G}$;
    
    \vphantom{a}\llap{e)} \emph{Inverse}. --- The inverse of an element $\mathfrak{g}\in\mathfrak{G}$ is in $\mathfrak{G}$;
    
    \vphantom{a}\llap{f)} \emph{Local}. --- If $\mathfrak{g}: U\mapsto V$ is a homeomorphism between open sets of $S$ and $U=\bigcup_{\ell\in L} U_\ell$, with $\mathfrak{g}\in\mathfrak{G}$, for all $\ell\in L$, then $\mathfrak{g}\in\mathfrak{G}$, \emph{provided that $U$ is connected}.

}

  ii) A one parameter pseudogroup acting on a $\mathfrak{V}$ is a pseudogroup $\mathfrak{G}$ such that there exists an application $\mathrm{Dom}$ from the set of open subsets of $\mathfrak{V}$ to the set of connected subsets of $\mathbb{R}$ containing $0$, such that: 
  
 {\leftskip=1cm
 
      \vphantom{a}\llap{a)} \emph{Existence}. --- For any open subset $U\subset\mathfrak{V}$, for any $s\in\mathrm{Dom}(U)$, there exists a mapping $\mathfrak{g}_U(s)\in\mathfrak{G}$, defined on $U$ and for any point $\eta\in\mathfrak{V}$, there exists $U\ni\eta$ such that $\mathrm{Dom}(U)$ is open and $\mathfrak{g}_U$ is locally injective;
      
      \vphantom{a}\llap{b)} \emph{Unicity}. --- For any $\mathfrak{g}\in\mathfrak{G}$ defined on $U$ and different from identity, we have $\mathrm{Dom}(U)\neq\{0\}$ and there exists $s\in\mathrm{Dom}(U)$ such that $\mathfrak{g}=\mathfrak{g}_U(s)$;
      
      \vphantom{a}\llap{c)} \emph{Inverse}. --- If $\mathrm{Im}\mathfrak{g}_U(s)=V$, then if $\mathrm{Dom}(U)=-\mathrm{Dom}(V)$, and $\forall s\in\mathrm{Dom}(U)$, we have $\mathfrak{g}_V(-s)\circ \mathfrak{g}_U(s)=\mathrm{Id}_{|U}$;
      
      \vphantom{a}\llap{d)} \emph{Composition}. --- If $\mathrm{Im}\mathfrak{g}_U(s_1)=V$, then if $s_1\in\mathrm{Dom}(U)$ and $s_1+s_2\in\mathrm{Dom}(U)$, we have $s_2\in\mathrm{Dom}(V)$ and $\mathfrak{g}_V(s_2)\circ\mathfrak{g}_U(s_1)=\mathfrak{g}_U(s_1+s_2)$.

}
\end{definition}

``Existence'' basically means that we can associate to an open subset and an real number an element of the pseudogroup, ``Unicity'' that this is the only way to define pseudogroup elements, ``Inverse'' that the element defined on $V$ corresponding to $-s$ is the inverse of the element with image $V$ corresponding to $s$ and composition the the element corresponding to $s_1+s_2$ is the composition of the element defined by $s_1$ and the one defined by $s_2$, in an obvious way. These two last properties imply that $\mathfrak{G}_U(0)=\mathrm{Id}$.

\begin{remark}\label{rem::connected}
We have added the restriction \emph{provided that $U$ is connected} in i)~f), which does not appear in~\cite{Kobayashi1963}  for technical reasons. If not, we would have needed to admit mappings that are equal to  $\mathfrak{g}_{U_1}(s_1)$ on $U_1$ and $\mathfrak{g}_{U_2}(s_2)$ on $U_2$, with two different real values $s_1$ and $s_2$, which does not seem compatible with one could expect from a ``one parameter'' pseudogroup.
\end{remark}

We will now associate a one parameter pseudogroup structure to any integrable local symmetry.

\begin{theorem}
\label{thm::pseudo_group_flot}
Using definitions in def.~\ref{def::flow_mapping}, if $\updelta\neq0$, the set 
$$
\mathfrak{G}_{\updelta}:=\bigcup_{U\subset\mathfrak{U}}\{\mathfrak{g}_U(s)|s\in I(U)\},
$$
where $U$ denotes open subsets, defines a one parameter pseudogroup, using the mapping $I:U\mapsto U(U)$, as in def.~\ref{def::pseudogroup}.
\end{theorem}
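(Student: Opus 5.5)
The plan is to check the axioms of def.~\ref{def::pseudogroup} one at a time. The dictionary between $\mathfrak{g}_U(s)$ and the functions $g_F(s)$ of lem.~\ref{lem::gamma} carries most of the load. By th.~\ref{th::diff_morphism} each $\mathfrak{g}_U(s)$ is already a diffiety morphism, so it only remains to show that these maps are invertible and that the set $\mathfrak{G}_{\updelta}$ is closed under the required operations.

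\textbf{Step 1: the one parameter axioms ii)~c) and ii)~d).} We set $\mathrm{Dom}(U):=I(U)$; it is connected and contains $0$, being an intersection of intervals containing $0$.
\begin{itemize}
\item The group law comes from lem.~\ref{lem::gamma}~iv) together with \eqref{eq::composition2}. On coordinates,
$$
\mathfrak{g}_U(s_1+s_2)^{\ast}x_i^{(k)}=g_{x_i^{(k)}}(s_1+s_2)=g_{g_{x_i^{(k)}}(s_1)}(s_2),
$$
and this reads exactly as $\mathfrak{g}_V(s_2)\circ\mathfrak{g}_U(s_1)$ with $V=\mathrm{Im}\,\mathfrak{g}_U(s_1)$.
\item To see that $s_2\in I(V)$, we note that the maximal interval of the flow through the point $\mathfrak{g}_U(s_1)(\eta)$ is the interval through $\eta$ translated by $-s_1$. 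This follows from uniqueness of maximal solutions of \eqref{eq::sys_gamma} in lem.~\ref{lem::gamma}~i).
\item Taking $s_2=-s_1$ and using $\mathfrak{g}_U(0)=\mathrm{Id}_{|U}$ gives the inverse axiom ii)~c), with $\mathrm{Dom}(V)=-\mathrm{Dom}(U)$ obtained by the same translation argument.
\item Consequently each $\mathfrak{g}_U(s)$ is a bijection onto its image, with analytic inverse $\mathfrak{g}_V(-s)$. Its image $V$ is open: it is the preimage of an open set under the continuous map $\mathfrak{g}_V(-s)$, since the topology is determined on finite jets by rem.~\ref{rem::topology}.
\end{itemize}

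\textbf{Step 2: the axioms of part i).} We take them in order.
\begin{itemize}
\item \emph{Cover} holds on $\mathfrak{U}$, which is the domain being considered.
\item \emph{Restriction} holds because $g_F$ is defined pointwise in $\eta$, so $\mathfrak{g}_U(s)_{|U'}=\mathfrak{g}_{U'}(s)$ with $s\in I(U)\subset I(U')$.
\item \emph{Neutral element} is $s=0$.
\item \emph{Inverse} and \emph{composition} come from Step~1. For composition, we restrict $\mathfrak{g}_{U}(s_1)$ to $U\cap\mathfrak{g}_U(s_1)^{-1}(\mathrm{Dom}\,\mathfrak{g}_2)$ and then apply ii)~d).
\item For the \emph{local} axiom i)~f), suppose $\mathfrak{g}$ agrees with $\mathfrak{g}_{U_\ell}(s_\ell)$ on each $U_\ell$, with $U$ connected. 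On overlaps we have $\mathfrak{g}_{U_\ell}(s_\ell)=\mathfrak{g}_{U_{\ell'}}(s_{\ell'})$. By Step~1 this gives $\mathfrak{g}(s_\ell-s_{\ell'})=\mathrm{Id}$ on an open set, which forces $s_\ell=s_{\ell'}$, as explained in Step~3.
\item Connectedness then propagates a single value $s$ across all of $U$, and $\mathfrak{g}=\mathfrak{g}_U(s)$.
\end{itemize}

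\textbf{Step 3: existence, local injectivity and unicity (ii)~a), b)), the main obstacle.} The crucial fact is the following: if $\updelta\neq0$ and $\mathfrak{g}_W(t)=\mathrm{Id}$ on an open set $W$, then $t=0$.

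This is where the hypothesis $\updelta\neq 0$ enters. Some coordinate satisfies $\updelta x_i\neq0$ on a dense open subset, and analyticity of $s\mapsto g_{x_i}(s,\eta)$ is what we will exploit. The plan is to restrict to a flow box for the finite dimensional field induced by $\updelta$ on the coordinates $Y$ of lem.~\ref{lem::integrability}. There, by the straightening theorem, the flow is translation in one coordinate, so it has no periodic or fixed points locally. Hence $s\mapsto\mathfrak{g}_W(s)(\eta)$ is locally injective near $0$, which gives ii)~a).

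Possible periodic orbits, which would give a nonzero $t$ with $\mathfrak{g}_W(t)=\mathrm{Id}$, must be excluded or absorbed. I would handle this by shrinking $W$ to a flow box where $I(W)$ is small enough that $t$ is forced to be $0$. This smallness is also consistent with the requirement $\mathrm{Dom}(U)$ open in ii)~a).

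Unicity ii)~b) is then essentially tautological, since every element of $\mathfrak{G}_\updelta$ is by definition some $\mathfrak{g}_U(s)$. The remaining point is that $\mathrm{Dom}(U)\neq\{0\}$ whenever $\mathfrak{g}\neq\mathrm{Id}$: a non-identity element has $s\neq0$ and $s\in I(U)$. I expect the periodicity and flow-box argument, and the care needed over the exact domains $I(U)$, to be the delicate part of the proof.
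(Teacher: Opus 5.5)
Your overall architecture is the paper's. You derive ii)~c),~d) from lem.~\ref{lem::gamma}~iv) and from them i)~c),~e); you read restriction, neutral element and unicity off def.~\ref{def::flow_mapping}; you get local injectivity from $\updelta\neq0$; and you use connectedness for i)~f). Your translation of maximal intervals, which gives $s_2\in I(V)$ and $s_1+s_2\in I(W)$ for compositions, usefully fills in what the paper leaves implicit. Note that it yields $I(V)=I(U)-s$, not $-I(U)$; what ii)~c) actually needs is only $-s\in I(V)$.

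The genuine gap is the ``crucial fact'' of Step~3, which is false. On $\mathbb{T}^2$ the prolongation of $\updelta x_1=x_2$, $\updelta x_2=-x_1$ is an integrable symmetry of order $0$. Its flow is the rotation $g_{x_1}(s)=x_1\cos s+x_2\sin s$, $g_{x_2}(s)=x_2\cos s-x_1\sin s$, so $I(W)=\mathbb{R}$ and $\mathfrak{g}_W(2\pi)=\mathrm{Id}_{|W}$ for every open $W$. Your remedy cannot work either. $I(W)$ is not at your disposal: def.~\ref{def::flow_mapping} fixes it as an intersection of maximal intervals over $W$, so shrinking $W$ can only enlarge it. Consequently your proof of i)~f), which rests on ``$\mathfrak{g}(s_\ell-s_{\ell'})=\mathrm{Id}$ on an open set forces $s_\ell=s_{\ell'}$'', breaks down. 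The flow box legitimately gives only injectivity of $s\mapsto\mathfrak{g}_W(s)$ for $s$ near $0$, which is all the paper draws from $\updelta(\eta)\neq0$. For i)~f) you must absorb periodicity by comparing maps rather than parameters. For instance, $\mathfrak{g}(s_\ell-s_{\ell'})$ depends analytically on finitely many jet coordinates, so the identity theorem propagates the equality; you then still have to check that a single representative $s$ lies in $I(U)$. The paper's maximal-$U_0$ argument for i)~f) also tacitly sets $s=s_{\ell_0}$ on overlaps, so it needs the same repair.

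You also give no argument that $\mathrm{Dom}(U)$ is open for some $U\ni\eta$, as ii)~a) requires; the remark that ``smallness'' is consistent with it is not a proof, for the reason just given. The paper attempts this by compactness. It takes $r\ge e$ with $\pi_r^{-1}\pi_r\mathfrak{U}=\mathfrak{U}$, so that $\epsilon_\pm(\eta)$ depend only on $\pi_r\eta$, chooses $U$ with $\overline{\pi_rU}$ compact, and argues that the endpoints of $I(U)$ are not attained. Be aware, however, that the limit point $\eta_0$ produced there lies in $\overline{U}$, not in $U$. In fact the claim fails as stated. For $\updelta x=x^2$ on $\mathbb{T}^1$ the flow is $x/(1-sx)$, with $\epsilon_+=1/x$ for $x>0$ and $\epsilon_-=1/x$ for $x<0$. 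Then $I(U)$ contains one of its endpoints for every nonempty open $U$; for instance $I(U)=]-\infty,2/3]$ when the projection of $U$ on $x$ is $]1/2,3/2[$. So this clause cannot come from a cleverer choice of $U$; it requires modifying $\mathrm{Dom}(U)$, or the openness requirement, itself.
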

\begin{proof}
To prove ii) a) \textit{Existence}, we first remark that $0\in\mathrm{Dom}(U)$ for any open set $U$ and that $\mathrm{Id}=\mathfrak{g}_U(0)$ is defined on $U$. According to rem.~\ref{rem::topology}, there exists $r\in\mathbb{N}$ such that $\pi_r^{-1}\pi_r\mathfrak{U}=\mathfrak{U}$ and it may be chosen so that $r\ge e$, where $e$ is the maximal order of the $\updelta^k x_i$. With this definition, the interval $]\epsilon_{-}(\eta),\epsilon_{+}(\eta)[$ only depends on $\pi_r\eta$, according to def.~\ref{def::flow_mapping}. 

Any point $\eta\in\mathfrak{U}$ belongs to an open subset $U$ such that the adherence $\overline{\pi_r U}$ in $\pi_r\mathfrak{U}$ is compact. Assume that $\overline{I(U)}=[\hat\epsilon_{-},\hat\epsilon{+}]$ and that $\hat\epsilon_{+}\in I(U)$. There exists a sequence of points $\eta_\ell$, $\ell\in\mathbb{N}$, such that $\epsilon_{+}(\eta_{\ell})$ tends to $\hat\epsilon_{+}$. We can extract a subsequence $\eta_{\ell_\lambda}$ such that $\pi_r(\eta_{\ell_\lambda})$ is convergent in the compact set $\overline{\pi_r U}$ and find $\eta_0\in U$ with $\pi_r(\eta_0)=\lim_{\lambda}\pi_r(\eta_{\ell_\lambda})$. Then, $\epsilon_{+}(\eta_0)=\hat\epsilon_{+}$ and $\hat\epsilon_{+}\notin I(U)$, a contradiction. We may proceed in the same way for $\hat\epsilon_{-}$.

As $\updelta\neq0$, there exist $\eta\in U$, such that $\updelta(\eta)\neq0$, which implies local injectivity. This concludes the proof of ii) a), of which i) a) is an easy consequence.

i) b), i) d) and ii) b) are straightforward consequences of def.~\ref{def::flow_mapping}.

ii) c) and d) are consequences of lem.~\ref{lem::gamma}~iv), that respectively imply i) c) and i) e).

We now need to prove i) f). For any diffiety homeomorphism $\hat{\mathfrak{g}}:U\mapsto V$ such that $U=\bigcup_{\ell\in L}U_\ell$ and $\hat{\mathfrak{g}}_{| U_\ell}=\mathfrak{g}_{U_\ell}(s_\ell)$, assume the result does not stand. Then, there exists an open set $U_0\subset U$, maximal for inclusion, such that $\hat{\mathfrak{g}}_{|U_0}=\mathfrak{g}_{U_0}(s)$ for some $s\in\mathrm{Dom}(U_0)$. As $U$ is connected, one may find $\ell_0$ such that $U_{\ell_0}\cap U_0\neq\emptyset$. Then, $s=s_{\ell_0}\in\mathrm{Dom}(U_0)\cap\mathrm{Dom}(U_{\ell_0})$ and $\hat{\mathfrak{g}}_{|U_0\cup U_{\ell_0}}=\mathfrak{g}_{U_0\cup U_{\ell_0}}(s)$, a contradiction that concludes the proof.
\end{proof}

The existence of Lie--Bäcklund compatible vector fields that are also integrable is not guaranteed in general. We shall go more in depth on this point in the sequel.  

When there are such vector fields, notice that they do not necessarily define a distribution, as example~\ref{ex::not_stable_by_addition} shows.
Therefore the set of Lie-Bäcklund compatible and integrable vector fields on a diffiety $\mathfrak{X}$, that we denote $\mathcal{D}(\mathfrak{X})$, may or not be empty, may or not be a distribution, and in the latter case, it may or not have constant rank, and may or not be involutive.

For each $\updelta \in \mathcal{D}(\mathfrak{X})$, theorem~\ref{thm::pseudo_group_flot} shows that such a derivation defines a pseudogroup of local Lie-Bäcklund automorphisms, that we have denoted $\mathfrak{G}_\updelta$. These pseudogroups, and the way to compute them, are the core subject of this article. 

Finding a vector field $\updelta$ that commutes with $\uptau$ is computationally straightforward. On the other hand testing integrability can be more difficult and proving that there is no nonzero integrable Lie-Bäcklund compatible vector field remains an open problem in the general case. In this work, we only introduce a general method for symmetries of a given order, but it is computationally difficult, so that we limit ourselves to a collection of basic examples that could be solved using simplifications. 

\section{Some theoretical results}\label{sec::}

\subsection{Order of an integrable symmetry and an analog of Sluis -- Rouchon theorem}\label{subsec::Order_Sluis_Rouchon}
In this section we introduce results that allow in some cases to bound the order of an integrable symmetry. We will start with the easy case of one control before considering multi-input systems. We need first a technical lemma.

\begin{lemma}
\label{lem::e_and_flow}
Consider a diffiety $\mathfrak{X}$, which Cartan field is $\uptau$. Let $\updelta$ be a vector field that commutes with $\uptau$ and which is integrable. Then by theorem~\ref{thm::pseudo_group_flot}, $\updelta$ defines a one-parameter pseudogroup of local Lie-B\"acklund automorphisms. Moreover let $e = \max_{i} \max_k \ord_\uptau \, \updelta^k x_i$. Assume that $e=\max_k \ord_\uptau \, \updelta^k x_{i_0}$. Then, we have $e = \max_{s\in I(U)} \ord_\uptau \, g_{x_i}(s)$, where $U$ is any open subset of $\mathfrak{U}$ such that $\overline{U}$ is compact.   
\end{lemma}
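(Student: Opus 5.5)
We prove the two inequalities $\max_{s\in I(U)} \ord_\uptau g_{x_i}(s)\le e$ and $\ge e$ separately. The tool in both directions is the Taylor expansion of the flow in the parameter $s$. By lem.~\ref{lem::gamma}~iii) and the analyticity in lem.~\ref{lem::gamma}~ii), for $\eta$ in $U$ and $s$ small we have
$$
g_{x_i}(s)=\sum_{k\in\mathbb{N}} \frac{s^k}{k!}\,\updelta^k x_i ,
$$
because $\partial_s^k g_{x_i}(0)=\updelta^k x_i$ by induction from \eqref{eq::F_delta}.

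\emph{Upper bound.} By lem.~\ref{lem::integrability}~i) and lem.~\ref{lem::gamma}~vi), $g_{x_i}(s,\eta)=\gamma_{\pi\eta,i,0}(s)$. This is an analytic function of $s$ and of the initial data $\updelta^k x_j(\eta)$, $k<r_j$. Those data are functions of the derivatives in $\Upsilon_e$ only, so each $g_{x_i}(s)$, viewed as a function on $U$, depends only on jets of order $\le e$. Hence $\ord_\uptau g_{x_i}(s)\le e$ for every $s\in I(U)$. We need $\overline{U}$ compact so that $I(U)$ is an open interval containing $0$, exactly as in the existence part of the proof of th.~\ref{thm::pseudo_group_flot}. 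Then $g_{x_i}(s)$ is defined on all of $U$ for all $s\in I(U)$, and the maximum over $s$ makes sense.

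\emph{Lower bound.} Choose $k_0$ with $\ord_\uptau \updelta^{k_0}x_{i_0}=e$, and a derivative $\upsilon_0=x_j^{(e)}$ with $\partial \updelta^{k_0}x_{i_0}/\partial\upsilon_0\not\equiv0$. Differentiating the series term by term, which is legitimate by analyticity, gives
$$
\frac{\partial g_{x_{i_0}}(s)}{\partial\upsilon_0}=\sum_k \frac{s^k}{k!}\frac{\partial \updelta^k x_{i_0}}{\partial \upsilon_0}.
$$
The coefficient of $s^{k_0}$ is not identically zero on $U$, so the left-hand side is not identically zero as a function of $(s,\eta)$. Hence there is some $s$ near $0$, which lies in $I(U)$ since that set is an open interval containing $0$, and some $\eta\in U$ with $\partial g_{x_{i_0}}(s)/\partial\upsilon_0(\eta)\neq0$. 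Therefore $\ord_\uptau g_{x_{i_0}}(s)\ge e$. Together with the upper bound this gives equality, with the maximum attained at $i=i_0$.

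\emph{Main obstacle.} The delicate point is justifying that $g_{x_i}(s)$ depends only on $\Upsilon_e$ uniformly in $s\in I(U)$, and not just for small $s$. For small $s$ this follows from the series. For the whole interval I would use analytic continuation in $s$: for each $\tilde\upsilon\notin\Upsilon_e$, the derivative $\partial g_{x_i}(s)/\partial\tilde\upsilon$ is analytic in $s$ on the connected interval $I(U)$ and vanishes near $0$, so it vanishes identically. The same argument also shows that the nonvanishing found for small $s$ is generic in $s$.
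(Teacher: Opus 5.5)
Your proof is correct and follows the paper's route. Both arguments rest on the identity $\partial_s^k g_{x_{i_0}}(s)|_{s=0}=\updelta^k x_{i_0}$ and on the compactness argument of th.~\ref{thm::pseudo_group_flot}~ii)~a), which makes $I(U)$ a neighbourhood of $0$, so that the $s^{k_0}$ coefficient forces order $e$. Your explicit upper bound, using dependence of the flow on $Y$ only and analytic continuation in $s$ along $I(U)$, makes precise a step the paper leaves implicit.
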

\begin{proof}
The equality $e = \max_i \max_s \ord_\uptau \, g_{x_{i_0}}(s)$ follows from the fact that by definition the successive derivatives $\partial_s^k g_{x_{i_0}}(s)$ are precisely equal to the sequence $\updelta^k x_{i_0}$, $k\in\mathbb{N}$. Proceeding as in the proof of th.~\ref{thm::pseudo_group_flot}~ii)~a), the interior of $I(U)$ is nonempty if $U$ is any open subset of $\mathfrak{U}$ such that $\overline{U}$ is compact, so that we can consider successive derivatives until one is equal to $\updelta^{k_0} x_{i_0}$ and of order $e$, which implies that $g_{x_{i_0}}(s)$ is of order $e$.
\end{proof}

\begin{theorem}\label{thm::ord_one_control}
    Let $\updelta$ be an integrable infinitesimal symmetry of a diffiety defined by a system~\eqref{eq::system} with one control. Then, for all $1\le i\le n$, $\delta x_i$ does not depend on the control $u$ nor any of its derivatives $u^{(k)}$.
\end{theorem}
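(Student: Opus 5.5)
We argue by contradiction. Suppose that some $\updelta x_i$ depends on $u$ or one of its derivatives, and let $d\ge0$ be the largest order of $u$ appearing in the $a_i=\updelta x_i$, $1\le i\le n$. By Lemma~\ref{lem::commutativity} we have $\updelta u^{(k)}=b_k$ with $b_{k+1}=\uptau b_k$. Since $u$ enters the $f_i$ (full control condition with $m=1$), equation~\eqref{eq::commutativity1} gives $\uptau a_i=\updelta f_i=\sum_j a_j\partial_{x_j}f_i+b_0\partial_u f_i$. The left side is affine in $u^{(d+1)}$ with coefficient $\partial a_i/\partial u^{(d)}$. Hence $b_0$ must have order $d+1$ in $u$, with leading coefficient proportional to $\partial a_i/\partial u^{(d)}$. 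Then $b_k=\uptau^k b_0$ has order $d+1+k$, and it is linear in its top derivative $u^{(d+1+k)}$ with a coefficient $c:=\partial b_0/\partial u^{(d+1)}$ that does not depend on $k$.

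The key step is to show that the orders $\ord_\uptau\updelta^\ell x_i$ grow without bound in $\ell$, contradicting integrability (Definition~\ref{def::integrability}). I would track the top derivative along the iterates. Choose $i$ such that $\partial a_i/\partial u^{(d)}\neq0$ generically. Then
\[
\updelta^2 x_i=\updelta a_i=\sum_j a_j\partial_{x_j}a_i+\sum_{k\le d} b_k\,\partial_{u^{(k)}}a_i .
\]
The term $b_d\,\partial_{u^{(d)}}a_i$ has order $2d+1$ and is linear in $u^{(2d+1)}$ with coefficient $c\,\partial_{u^{(d)}}a_i\neq0$, while every other term has order at most $2d$. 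By induction I expect $\updelta^\ell x_i$ to have order $\ell d+(\ell-1)$, with a leading coefficient that is a product of nonvanishing factors. This cannot be bounded, so $e=\infty$.

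The main obstacle is the induction. The leading coefficient must not cancel, and the degenerate case $d=0$ with terms of order $0$ needs care: there $b_0$ has order $1$, and the top order still increases by at least $1$ at each step. To make the induction clean, I would write $\updelta^\ell x_i=P_\ell$ and use that $\updelta$ acting on $u^{(N)}$ produces $b_N$ of order $N+d+1$, linear in $u^{(N+d+1)}$ with coefficient $c$. So if $P_\ell$ has order $N\ge d$ and $\partial P_\ell/\partial u^{(N)}\ne0$, then $\updelta P_\ell$ has order $N+d+1$ with top coefficient $c\,\partial P_\ell/\partial u^{(N)}$. No other term can reach that order: the $a_j\partial_{x_j}$ terms have order at most $\max(N,d)$, and $b_k$ for $k<N$ has order below $N+d+1$. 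This is a triangular, leading-term argument. If $c$ vanishes identically we get a contradiction already at the first step, through the equation for $b_0$. Alternatively, one could phrase the contradiction through Lemma~\ref{lem::e_and_flow}: the flow $g_{x_i}(s)$ would then have unbounded order.
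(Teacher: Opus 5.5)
Your argument is correct, and it takes a different route from the paper's proof of this theorem.

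\textbf{The paper's route.} It first passes to coordinates where $u=\dot x_{i_0}$ for a state variable $x_{i_0}$ whose $\uptau$-derivatives are functionally independent. It runs the order-growth argument, $\ord\updelta^k x_{i_0}=kr$, only for that variable. The remaining state variables are handled separately: their derivatives are functionally dependent, this dependence is transported to the flow $g_{x_{i_1}}(s)$, and Lemma~\ref{lem::e_and_flow} is invoked.

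\textbf{Your route.} You use the commutation relation $\uptau a_i=\updelta f_i$ to push the top $u$-order of any offending $a_i$ into $b_0=\updelta u$. A single leading-term induction on $\updelta^\ell x_i$ itself then gives the explicit order $\ell(d+1)-1$.

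\textbf{Comparison.} Your route treats all state variables uniformly and needs no change of coordinates and no flow. It also sidesteps the rather terse step where functional dependence of the $x_{i_1}^{(k)}$ is carried over to $g_{x_{i_1}}(s)$. It is essentially the mechanism the paper itself uses later for Theorem~\ref{th::one_control}, where the order is transferred to $a_1$ through $\updelta\dot x_{i_0}$. The paper's route, for its part, frames the result through the flow of Lemma~\ref{lem::e_and_flow}. That is the same device behind Corollary~\ref{cor::k_e}, so it fits the Sluis--Rouchon viewpoint of that subsection.

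\textbf{A small wording correction.} The full control condition only guarantees $\partial_u f_j\neq0$ for \emph{some} index $j$. For your chosen $i$, it is the order comparison in $\uptau a_i=\sum_j a_j\partial_{x_j}f_i+b_0\,\partial_u f_i$ that forces $\partial_u f_i\neq0$. The same comparison forces $b_0$ to have order exactly $d+1$. Hence $c=\partial_{u^{(d)}}a_i/\partial_u f_i\neq0$, which is what your induction actually uses.
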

\begin{proof}
    Without loss of generality, we can reduce to the case $u=\dot x_{i_0}$, for any state variable $x_{i_0}$ such that all derivatives $x_{i_0}^{(k)}$, $k\in\mathbb{N}$ are functionally  independent. Assume then that $\ord_{x_{i_0}}\updelta x_{i_0}=r>0$. We would have $\ord\updelta^k x_{i_0}=kr$, which contradicts integrability. Now, if the derivatives $x_{i_1}^{(k)}$, $k\in\mathbb{N}$ are functionally dependent, this is also the case of $g_{x_{i_1}}(s)$, so that $g_{x_{i_1}}(s)$ cannot depend on any strict derivatives of $x_0$ and $\ord\updelta x_1=0$ by lem.~\ref{lem::e_and_flow}.
\end{proof}

The next theorem is inspired by the classical results of Sluis~\cite{Sluis1993} and Rouchon~\cite{Rouchon1994} (see also~\cite{Ollivier1998} for an approach closer to this one). 

\begin{theorem}
\label{thm::rouchon_like}
Consider two diffieties $\mathfrak{X} = X \times (\R^\infty)^m_1$ and $\mathfrak{Y}  = Y \times (\R^\infty)^m_2$. The Cartan fields are given by $\uptau_1 = \sum_{i=1}^{n_1} f_i(\overline{x},\overline{u}) \partial_{x_i} + \sum_{j=1}^{m_1} \sum_{k \in \N} u_j^{(k+1)} \partial_{u_j^{(k)}}$ and $\uptau_2 = \sum_{i=1}^{n_2} g_i(\overline{y},\overline{v})\partial_{y_i} + \sum_{j=1}^{m_2} \sum_{k \in \N} v_j^{(k+1)} \partial_{v_j^{(k)}}$. 
	
As in rem.~\ref{rem::controls} we may assume that $u_i = \dot{x}_{i}$ for $i=1, \ldots, m_1$ and $v_i = \dot{y}_{i}$ for $i=1, \ldots, m_2$. 	
Assume these diffieties are Lie-B\"acklund isomorphic. Therefore we have relations: $x_i = X_i(y, \dot y, \cdots, y_2^{(k_0)})$ for $1 \leq i \leq n$ and $k_0\in\mathbb{N}$ minimal.

Consider the derivation $D \in \mathrm{Der}_\R(\mathcal{O}(\mathfrak{X})[A_1, \cdots, A_{m_1}])$ (where $A_1, \cdots, A_{m_1}$ are indeterminates), such that (i) $D =\sum_{i=1}^{m} A_i \partial_{\dot x_i}$ and we have (ii) $DA_i = 0$. 

If there exist $(i_0,i_1)\in[1,n_1]^2$ such that $\partial_{x_{i_0}^{(k_0)}}X_{i_1}\neq0$, then the algebraic and homogeneous ideal $\mathcal{J}_D$ in the $A_i$ $\mathcal{I} = (D^{\ell} (\dot{x}_i - f_i(\overline{x},\overline{u})))_{1 \leq i \leq n_1,\> \ell \in \N}$ has a non-trivial solution in the algebraic closure of the domain $\mathcal{O}(\mathfrak{X})$.
\end{theorem}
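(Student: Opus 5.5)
We follow the classical Sluis--Rouchon argument: take the highest-order terms of the isomorphism and show that they produce a nonzero solution of the ideal $\mathcal{I}$.

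Let $\Phi:\mathfrak{X}\to\mathfrak{Y}$ and $\Psi:\mathfrak{Y}\to\mathfrak{X}$ be mutually inverse Lie--Bäcklund morphisms. Then $x_i=X_i(y,\dots,y^{(k_0)})$ and, conversely, $y_j=Y_j(x,\dots,x^{(q)})$. Substituting $y=Y(x,\dots)$ into the relations $\dot y_j=g_j(y,\dot y_1,\dots,\dot y_{m_2})$ gives identities in $\mathcal{O}(\mathfrak{X})$. The structure of $\mathfrak{X}$ is given by the relations $\dot x_i=f_i(\overline{x},\overline{u})$ with $u_i=\dot x_i$, so the only free jets are $x_1^{(k)},\dots,x_{m_1}^{(k)}$. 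Everything we need is therefore encoded in the relations $\dot x_i-f_i$ and their $\uptau_1$-prolongations.

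\textbf{Step 1: the symbol of the inverse map.} We linearize the identities $x_i=X_i(Y(x),\uptau_2Y(x),\dots,\uptau_2^{k_0}Y(x))$ with respect to the highest derivatives. Fix $j$ and let $s_j$ be the maximal order with which $Y_j$ depends on the free jets. The top-order part of $\uptau_2^{k_0}Y_j$ then involves $x^{(s_j+k_0)}$, with coefficient $\partial Y_j/\partial x^{(s_j)}$ after the substitutions $u=\dot x$.

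The hypothesis says that $\partial_{y_{i_0}^{(k_0)}}X_{i_1}\neq0$ for some pair $(i_0,i_1)$; we read the statement's $x_{i_0}^{(k_0)}$ as $y_{i_0}^{(k_0)}$. So the right-hand side formally contains jets of order $s+k_0$, where $s$ is the largest $s_j$ among the indices $j$ entering $X_{i_1}$ at order $k_0$. The left-hand side $x_{i_1}$ has order $0$. When $k_0\ge1$, or $s\ge1$, this is an order drop, and the leading coefficient must vanish:
\[
\sum_{j}\frac{\partial X_{i_1}}{\partial y_j^{(k_0)}}\,\sigma_j=0,\qquad \sigma_j:=\sum_{\ell}\frac{\partial Y_j}{\partial x_\ell^{(s)}}\,\xi_\ell^{\,\cdot},
\]
where $\xi$ stands for a symbolic variable.

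\textbf{Step 2: translating to $D$.} Next we identify the symbol with the action of $D$. The derivation $D=\sum A_i\partial_{\dot x_i}$ with $DA_i=0$ is exactly the operator that takes the part of an expression of degree $\ell$ in the highest jets and evaluates it on the direction $A$. Applying $D^\ell$ to $\dot x_i-f_i$ and requiring the result to vanish is the condition that $A$ be a ``characteristic direction'' annihilated by all the leading symbols of the system equations. The order drop found in Step 1 shows that the homogeneous polynomial system $D^\ell(\dot x_i-f_i)=0$ in $A$ cannot force $A=0$. Concretely, we would take $A_i:=\partial Y_{j}/\partial x_i^{(s)}$, or more precisely the tuple of leading coefficients obtained from a $j$ with $\partial_{y_j^{(k_0)}}X_{i_1}\ne0$. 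This $A$ is nonzero because $s_j$ is attained. The identity of Step 1, together with all its $\uptau$-prolongations, then says that every element of $\mathcal{I}$ vanishes at this $A$, after passing to the algebraic closure of the fraction field of $\mathcal{O}(\mathfrak{X})$. Homogeneity of $\mathcal{J}_D$ follows because $D^\ell$ of each equation is homogeneous of degree $\ell$ in $A$.

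\textbf{Main obstacle.} The delicate point is Step 1: rigorously extracting the vanishing of the leading-order coefficient. The difficulty is that the substitution introduces possible cancellations between different $j$, and the dependent variables $x_{m_1+1},\dots$ have to be eliminated through the $f_i$. We would first handle this by a minimality argument on $k_0$: if every candidate leading form were nonzero, we could derive an order strictly greater than $0$ for $x_{i_1}$, a contradiction. We would then use Hilbert's Nullstellensatz to turn ``the leading forms have a common nontrivial zero'' into the claim that $\mathcal{J}_D$ is a proper homogeneous ideal, i.e. not containing a power of $(A_1,\dots,A_{m_1})$.
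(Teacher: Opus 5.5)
There is a genuine gap. The vector you propose in Step 2 is the wrong object, and your claim that every element of $\mathcal{I}$ vanishes at it is asserted rather than derived.

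A zero of $\mathcal{J}_D$ is a direction in the $\dot x$-fibre along which each $\dot x_i-f_i$ vanishes to all orders. A row $(\partial Y_j/\partial x_\ell^{(s)})_\ell$ of the symbol of the map $\mathfrak{X}\to\mathfrak{Y}$ has no reason to be such a direction. Your Step 1 gives a linear relation between the symbols of $X$ and $Y$, and that cannot produce the nonlinear conditions $D^\ell(\dot x_i-f_i)=0$.

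Concretely, take the Rouchon system $\dot x_3=\dot x_1\dot x_2$ with $D=A_1\partial_{\dot x_1}+A_2\partial_{\dot x_2}+A_3\partial_{\dot x_3}$, as in the paper's worked examples. Take the flat outputs $y_1=x_3-x_1\dot x_2$ and $y_2=x_2$. Then $X_1=-\dot y_1/\ddot y_2$, $X_2=y_2$, $X_3=y_1-\dot y_1\dot y_2/\ddot y_2$, with $k_0=2$ and $i_0=2$.

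\begin{itemize}
\item Your recipe gives $j=2$, $Y_2=x_2$, $s=0$, hence $A=(0,1,0)$. But then $D(\dot x_3-\dot x_1\dot x_2)=-\dot x_1\neq0$, so $A$ is not a zero. The same happens if $D$ only runs over $\dot x_1,\dot x_2$.
\item Step 1 fails too. The identity $X_1(Y(x),\dots)=x_1$ holds because the $\ddot x_2$ coming from $\ddot Y_2$ cancels against the one in $\dot Y_1=-x_1\ddot x_2$. Your displayed identity keeps only the $j$ entering at order $k_0$. It would therefore force $\partial X_1/\partial\ddot y_2=0$, contradicting the hypothesis.
\end{itemize}

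The missing idea is to stay on $\mathfrak{Y}$ and differentiate the direct map $X$, not the inverse $Y$, with respect to the top jet. Put $\hat A_i:=\partial X_i/\partial y_{i_0}^{(k_0)}$, read in $\mathcal{O}(\mathfrak{X})$ through the isomorphism.

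\begin{itemize}
\item $X$ does not involve $y_{i_0}^{(k_0+1)}$, and $\dot X_i=\sum_j\partial_{y_j^{(k_0)}}X_i\,y_j^{(k_0+1)}+(\text{lower order})$.
\item By the chain rule, $\partial_{y_{i_0}^{(k_0+1)}}[F(X,\dot X)]=(\hat DF)(X,\dot X)$ for every $F$, where $\hat D=\sum_i\hat A_i\partial_{\dot x_i}$.
\item $\hat A$ is itself independent of $y_{i_0}^{(k_0+1)}$; this plays the role of $DA_i=0$.
\item Iterating, $\hat D^\ell(\dot x_i-f_i)$ pulls back to $\partial^\ell_{y_{i_0}^{(k_0+1)}}\bigl(\dot X_i-f_i(X,\dot X)\bigr)$, which is $0$ because the system holds identically on $\mathfrak{Y}$.
\item Nontriviality of $\hat A$ is exactly the hypothesis $\partial_{y_{i_0}^{(k_0)}}X_{i_1}\neq0$.
\end{itemize}

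This is the paper's argument. It needs no inverse map, no order-drop bookkeeping and no Nullstellensatz, since the zero is exhibited explicitly. In the example one gets $\hat A=-\frac{x_1}{\ddot x_2}(1,0,\dot x_2)$. This satisfies $A_3=A_1\dot x_2+A_2\dot x_1$ and $A_1A_2=0$, as required.
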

\begin{proof}
We can specialize $A_i$ to $\hat A_i = \frac{\partial X_i}{\partial y_{i_0}^{(k_0+1)}}$, considering this function that belongs to $\mathcal{O}(\mathfrak{Y})$, as a function of $\mathcal{O}(\mathfrak{X})$ using the isomorphism. 

Moreover,  with $\hat D:=\sum_{i=1}^{m_1} \hat A_i\partial_{\dot x_i}$, we have $\hat{D} \hat A_i = 0$, since $X_i$ is of order $k>0$ in  $y_{i_0}$, so that $\hat A_i = \frac{\partial X_i}{\partial y_{n+j_0}^{(k)}}$ does not depend on $y_{n+j_0}^{(k+1)}$. 

We remark that, for any $1\le i\le n_1$, $\dot{x}_i - f_i(x, \dot x)$ is identically $0$, when considered as a function of $\mathcal{O}(\mathfrak{Y})$, using the isomorphism, and that for any $F(X,\dot X)$, we have: $\hat D F(X,\dot X)=\partial_{y_{i_0}^{(k_0+1)}}F(x,\dot x)$, as $\dot X_i=\sum_{j=1}^{m_2}\partial_{y_{j}^{(k_0)}}X_i y_{j}^{(k_0+1)}+$ some terms of strictly lower order. So, for any $k\in\mathbb{N}$ and any $1\le i\le n_1$, $\hat D^k(\dot{x}_i - f_i(x,\dot x))=\partial_{y_{i_0}^{(k_0+1)}}^k(\dot{X}_i - f_i(X,\dot X))=0$ and the ideal $\mathcal{J}_D$ has at least the non-trivial solution $\hat A$.
\end{proof}

\begin{corollary}\label{cor::k_e}
Consider a single diffiety $\mathfrak{X}$ with the Cartan field $\uptau$. Let $\updelta$ be a Lie-B\"acklund compatible and integrable vector field on $\mathfrak{X}$ of iterated order $e$. If $e > 0$, the ideal $\mathcal{J}_D$ has nontrivial solutions.  More precisely, if $e=\ord\updelta^{k_0}x_{i_0}$, then there exists a solution $\hat A$ such that $\hat A_{i_0}\neq0$.
\end{corollary}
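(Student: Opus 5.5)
The plan is to derive the corollary from Theorem~\ref{thm::rouchon_like}, applied with $\mathfrak{Y}=\mathfrak{X}$ (same Cartan field $\uptau$) and with the isomorphism supplied by the flow of $\updelta$. By Theorem~\ref{thm::pseudo_group_flot} and Theorem~\ref{th::diff_morphism}, for an open $U$ with compact closure and $s\in I(U)$, the map $\mathfrak{g}_U(s)$ is a local Lie--B\"acklund isomorphism onto its image. In coordinates it reads $x_i\mapsto g_{x_i}(s)$, with inverse $\mathfrak{g}_V(-s)$ by Definition~\ref{def::pseudogroup}~ii)~c). We therefore take $X_i:=g_{x_i}(s)$ as the functions expressing the $x$ of one copy in terms of the $y$ of the other copy, which is again $\mathfrak{X}$. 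We work locally, which suffices because the conclusion concerns generic solutions of the homogeneous ideal over the algebraic closure of $\mathcal{O}(\mathfrak{X})$.

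The key step is to make the order of this isomorphism positive, so that $k_0>0$. Assume $e>0$ and $e=\ord\updelta^{k_0}x_{i_0}$. By Lemma~\ref{lem::e_and_flow}, $\max_{s\in I(U)}\ord_\uptau g_{x_{i_0}}(s)=e$. We then argue more precisely, using analyticity in $s$. Since $\partial_s^k g_{x_{i_0}}(s)|_{s=0}=\updelta^k x_{i_0}$ and $\updelta^{k_0}x_{i_0}$ depends on some $x_{j}^{(e)}$, the function $s\mapsto \partial_{x_j^{(e)}}g_{x_{i_0}}(s)$ is analytic with a nonzero Taylor coefficient. Hence it is nonzero for all $s$ in a dense subset of $I(U)$, in particular for some small $s\neq0$. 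Also, $g_{x_{i_0}}(s)$ involves no derivative of order above $e$, because its Taylor coefficients $\updelta^k x_{i_0}$ do not. Fixing such an $s$, the isomorphism $X=g_x(s)$ has order exactly $e>0$, and the nonvanishing derivative of $X_{i_0}$ with respect to $x_j^{(e)}$ gives the hypothesis of Theorem~\ref{thm::rouchon_like}.

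We then apply Theorem~\ref{thm::rouchon_like}. Its proof constructs the explicit solution $\hat A_i=\partial X_i/\partial y_{j}^{(k_0)}$, up to the index shift coming from $u_i=\dot x_i$, and checks that all $\hat D^\ell(\dot x_i-f_i)$ vanish. This gives a nontrivial solution of $\mathcal{J}_D$. For the refined claim we need the component $\hat A_{i_0}=\partial X_{i_0}/\partial y_j^{(e)}$ to be nonzero, which is exactly what the choice of $s$ and $j$ guarantees. If $i_0>m$, so that $x_{i_0}$ is a state rather than a control-type variable, we have to check that the indexing still places $i_0$ among the $A$-coordinates. I expect that reduction through Remark~\ref{rem::controls} handles this, possibly after renumbering.

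The main obstacle is the order bookkeeping. The $A_i$ are indexed by the controls $\dot x_i$, while $\hat A_i$ is a derivative with respect to $y_j^{(k_0+1)}$ or $y_j^{(k_0)}$ depending on the convention. So one must check carefully that $\ord X=e$ in the sense of the theorem and that the nonvanishing component is indeed the one indexed by $i_0$. A secondary point is choosing $s\neq0$ generically so that no accidental cancellation lowers the order; analyticity in $s$ (Lemma~\ref{lem::gamma}~ii)) settles this.
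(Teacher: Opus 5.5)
Your proposal is correct and is essentially the paper's own argument: the flow map $\mathfrak{g}_U(s)\in\mathfrak{G}_{\updelta}$, given by $X_i:=g_{x_i}(s)$, is a local Lie--B\"acklund automorphism of order $e$ by Lemma~\ref{lem::e_and_flow}, and Theorem~\ref{thm::rouchon_like} applied to it gives $\hat A$ with $\hat A_{i_0}=\partial X_{i_0}/\partial y_j^{(e)}\neq0$. Your analyticity argument for picking $s\neq0$ just makes explicit what the paper leaves to that lemma, and your worry about $i_0>m$ is settled, as in the paper's worked examples, by letting $D$ range over all $\dot x_i$, $1\le i\le n$, since then $\hat A_{i_0}=\hat D\dot x_{i_0}=\partial\dot X_{i_0}/\partial y_j^{(e+1)}=\partial X_{i_0}/\partial y_j^{(e)}$ with no renumbering needed.
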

\begin{proof}
The equality between the iterated order $e$ and the order $g_{x_{i_0}}(s)$ is a direct consequence of lemma~\ref{lem::e_and_flow}. We only have then to remark that $\mathfrak{g}_U\in\mathfrak{G}_{\updelta}$, for any open subset $U\subset\mathfrak{U}$ such that $\overline{U}$ is compact is defined by $x_i=g_{x_{i_0}}(s)$ to apply theorem~\ref{thm::rouchon_like}.  
\end{proof}

\begin{example}
    Consider the equation $P=0$ with $P:=\dot x_3-\dot x_1\dot x_2$. We have $D:=\sum_{i=1}^3 A_i\partial_{\dot{x}_i}$, so that $D^2P=2 A_1A_2=0$, meaning that $A_1$ or $A_2$ must be $0$. Assume that $\updelta$ is an integrable infinitesimal symmetry of order $k>0$ of the diffiety defined by this equation, this corollary implies that $\ord\updelta x_1$ and $\ord\updelta x_2$ cannot both be equal to $k$.
\end{example}

\begin{example}\label{ex::2_2}
    Consider the equation $P=0$ with $P:=\dot x_3-\dot x_1^2\dot x_2^2$. We have again $D^4P=A_1^2A_2^2=0$, meaning that $A_1$ or $A_2$ must be $0$. Assume $A_1=0$, then $D^2P= -2 A_2^2 \dot{x}_1^2 =0$, so that $A_2$ is also $0$. (By symmetry, we also have $A_1=0$ assuming $A_2=0$). So, any integrable infinitesimal symmetry of the diffiety defined by $P$ must be of order $0$.
\end{example}

\subsection{PDE structure defined by an infinitesimal symmetry}\label{subsec::PDE}

A new diffiety structure $\hat V$ is then defined by considering the two commuting derivations: $\tau$ and $\updelta$. Using the coordinates $x_i$, for $1\le i\le n$ and $x_i^{(k)}$, for $1\le i\le m$ and $k\in\mathbb{N}^{\ast}$,  one just have to consider the expression of $\updelta$ given by
\begin{equation}\label{eq::delta}
\updelta = \sum_{i=1}^{m
}\sum_{k\in\mathbb{N}} a_{i}^{(k)}\partial_{x_i^{(k)}} + \sum_{i=m+1}^n a_i\partial_{x_i}.
\end{equation}

So, the diffiety $\hat V$ is the subdiffiety of $\mathbb{T}_{\updelta,\tau}^n$ defined by the differential ideal $(\updelta x_i-a_i,\>1\le i\le n;\> \tau x_i-f_i,\> m<i\le n)$ in the ring $\mathcal{O}(\mathbb{T}_{\updelta,\tau})$ of analytic functions on open subsets of $\mathbb{T}$ containing $\hat V$ and depending on a finite number of derivatives of the $x_i$ for the two derivations $\updelta$ and $\tau$. 

We will need to extend the notion of characteristic set~\cite{Boulier2009}, coming from differential algebra~\cite{Ritt1950}, to the analytic case, using classical results about admissible orderings and Gröbner or standard bases in rings of power series. See \textit{e.g.} Galligo~\cite{Galligo1979} or Becker~\cite{Becker1990}. We will limit here to a theoretical approach. Most significant examples are algebraic or could be easily reduced to the algebraic case, for which we can rely on algorithms for computing characteristic sets~\cite{Boulier2009}.

We consider a diffiety that is an extension of $\R_{t_1, \ldots, t_r}^r$, defined as a subdiffiety of the jet space $\mathrm{J}(\mathbb{R},\mathbb{R}^n):=\mathbb{R}\times\mathbb{T}^n$ (ex.~\ref{ex::trivial} and~\ref{ex::jet}) by a differential ideal $\mathcal{I}$ in the ring $\mathcal{R}:=\mathcal{O}(\mathrm{J}(\mathbb{R},\mathbb{R}^n))$ in some neighborhood of a point $\eta$. With our assumptions, $\mathcal{I}\cap\mathcal{O}(\R_{t_1, \ldots, t_r}^r)=0$.

\begin{definition}\label{def::char_set}

We denote by $\Theta$ the commutative monoid generated by $\Delta=\{\updelta_1, \ldots, \updelta_r\}$ and $\Upsilon:=\{x_1, \ldots, x_n\}\time\Theta$ the set of derivatives. An admissible ordering $\prec$ on $\Upsilon$ is such that for all $(\delta,\upsilon)\in\Delta\times\Upsilon$, $\delta\upsilon\succ\upsilon$ and for all $(\delta,\upsilon_1,\upsilon_2)\in\Delta\times\Upsilon^2$, $\delta\upsilon_1\prec\delta\upsilon_2$ is equivalent to $\upsilon_1\prec\epsilon_2$.

The \emph{main derivative} $\upsilon_P$ of $P\in\mathcal{R}$ is the greatest derivative appearing in $R$, \textit{i.e.} such that $\partial_{\upsilon_P}P$ is nonzero.

We consider then $P$ as a power series in $\upsilon-\eta_{\upsilon_P}$, where $\eta_{\upsilon_P}$ is the value of $\upsilon$ at point $\eta$, with coefficients in $\mathcal{R}$, not depending on $\upsilon_{P}$: $\sum_{k\in\N} c_k\upsilon_P^k$. The degree $d_P$ is the smallest index $k$ such that $c_k\neq0$. The rank of $P$ is $\mathrm{rank}P:=\upsilon_P^{d_P}$. We extend $\prec$ to $\mathcal{R}$ by comparing ranks: $P_1\prec P_2$ if $\upsilon_{P_1}\prec\upsilon_{P_2}$ or if $\upsilon_{P_1}=\upsilon_{P_2}$  and $d_{P_1}<d_{P_2}$. 

The \emph{separant} of $P$ is $S_P:=\partial_{\upsilon_P}P$ and the \emph{initial} of $P$ is the coefficient of $\upsilon_P^{d_P}$, denoted by $I_P$.

A \emph{chain} $\mathcal{A}$ of $\mathcal{I}$ is a subset $\{A_1, \ldots, A_q\}$ of $\mathcal{I}$, such that for any $A\in\mathcal{A}$ $I_A\notin\mathcal{I}$, for any couple $(A_i,A_j)\in \mathcal{A}^2$, $\upsilon_{A_i}$ is not equal to or a derivative of $\upsilon_{A_j}$. Assuming that the $A_i\in\mathcal{A}$ appear in increasing order, we extend the ordering $\prec$ to the set of chains in the following way: Let $\mathcal{A}_i$, for $i=1,2$ be two chains $\mathcal{A}_i=\{A_{i,1}, \ldots, A_{i,q_1}\}$, we have $\mathcal{A_1}\prec\mathcal{A}_2$ if there exists $1\le j\le\min(q_1,q_2)$ such that $\mathrm{rank}A_{1,\ell}=\mathrm{rank}A_{2,\ell}$ for any $1\le \ell<j$ and $\mathrm{rank}A_{1,j}<\mathrm{rank}A_{2,j}$ or if $q_1>q_2$ and 
$\mathrm{rank}A_{1,\ell}=\mathrm{rank}A_{2,\ell}$ for any $1\le \ell\le q_2$. 

By Kolchin~\cite[prop.~3 p.~81]{Kolchin1973}, $\prec$ is a well order on chains. A \emph{characteristic set} of $\mathcal{I}$ is a chain of $\mathcal{I}$ of minimal rank.
\end{definition}

\begin{proposition}\label{prop::W_gen} If $\mathcal{I}$ is a prime ideal which is equal to the ideal of functions in $\mathcal{O}(J(\R^r,\R^n))$ that vanish on all zeros of $\mathcal{I}$, then for any admissible ordering $\prec$, we have the two following properties.

    i) For any characteristic set $\mathcal{A}$ for $\mathcal{I}$ with respect to $\prec$ and any $A\in\mathcal{A}$, we have $S_A\notin\mathcal{I}$.

    ii) For any diffiety $V$ locally defined defined by the ideal $\mathcal{I}$, there exists a dense open set $W\subset V$ such that any point $\eta\in W$ admits a characteristic set $\mathcal{A}$ for $\prec$ containing only elements $A\in\mathcal{A}$ of the form $\upsilon_A-R$, where $R$ does not depend on $\upsilon_A$.
\end{proposition}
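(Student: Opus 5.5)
For i), I will follow the classical argument of differential algebra (Kolchin, Ritt), transported to the analytic setting. Let $\mathcal{A}$ be a characteristic set of $\mathcal{I}$ for $\prec$ and suppose that some $A\in\mathcal{A}$ has $S_A\in\mathcal{I}$. The separant $S_A=\partial_{\upsilon_A}P$ has main derivative at most $\upsilon_A$. If $\upsilon_{S_A}=\upsilon_A$, its degree in $\upsilon_A-\eta_{\upsilon_A}$ is $d_A-1$. Otherwise $\upsilon_{S_A}\prec\upsilon_A$.

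In either case I replace $A$ by $S_A$, or by a suitable element of $\mathcal{I}$ obtained from it, in $\mathcal{A}$. This should give a chain of strictly lower rank, contradicting minimality. The chain conditions are preserved: the main derivative does not grow, and the initial of the new element is not in $\mathcal{I}$ (if it were, I would pass to the next nonzero coefficient). One subtlety remains: if $S_A$ has no coefficient outside $\mathcal{I}$, then I use that $\mathcal{I}$ is prime and equal to its vanishing ideal. In that case $S_A$ vanishes on the zero set of $\mathcal{I}$ identically in $\upsilon_A$, so $A$ is locally constant in $\upsilon_A$ on $V$. Then $A-A|_{\upsilon_A=\eta_{\upsilon_A}}$ lies in $\mathcal{I}$, and the lower-ranked part gives again a smaller chain. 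I expect this real-analytic step to be the main obstacle. Here "vanishing on all zeros" is exactly what replaces the Nullstellensatz, and primality is what guarantees that $I_A\notin\mathcal{I}$ survives the substitution.

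For ii), I use i): on the dense open set $W_0=V\setminus\bigcup_{A\in\mathcal{A}}\{S_A=0\}$, each $A$ satisfies $\partial_{\upsilon_A}A(\eta)\neq0$. This set is dense because $S_A\notin\mathcal{I}$ and $V$ is irreducible, by primality and analyticity. At any $\eta\in W_0$, the implicit function theorem (as in the proof of lem.~\ref{lem::integrability}) gives $A=0\iff\upsilon_A=R_A$, with $R_A$ analytic near $\eta$ and independent of $\upsilon_A$. Moreover $A=U\cdot(\upsilon_A-R_A)$ with $U$ a unit near $\eta$, which follows from Weierstrass preparation in the degree-one case.

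Replacing each $A$ by $\upsilon_A-R_A$ keeps the elements in $\mathcal{I}$, since the germ ideal is saturated by units. It also keeps the same main derivatives, gives degree $1$ and initial $1\notin\mathcal{I}$. The rank does not increase, since $d=1\le d_A$. So the new set is still a chain of minimal rank, hence a characteristic set of the required form.

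The last point is that the characteristic set should be chosen at $\eta$, since ranks are defined via expansions at $\eta$. I would handle this by noting that the set of main derivatives of a characteristic set is determined by the prime ideal and the ordering, so a single choice works on a dense open $W$.
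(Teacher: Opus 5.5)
Your proposal follows essentially the same route as the paper. For i) you replace $A$ by its separant $S_A$ and contradict the minimality of the characteristic set, using the hypothesis that $\mathcal{I}$ is its own vanishing ideal only to dispose of a degenerate case; for ii) you apply the implicit function theorem on the open set where the separants of a fixed characteristic set do not vanish. Your Weierstrass-preparation argument for $\upsilon_A-R_A\in\mathcal{I}$ and your remarks on density and on the base point $\eta$ make explicit what the paper only asserts, and your ``subtle'' subcase cannot actually occur when $d_A\ge1$, since the coefficient of $(\upsilon_A-\eta_{\upsilon_A})^{d_A-1}$ in $S_A$ is $d_A I_A\notin\mathcal{I}$.
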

\begin{proof}
    i) Assume that $\mathrm{Sep}A\notin\mathcal{I}$. If $d_A>1$, then $d_{S_A}=d_A-1$ and one would get a strictly smaller autoreduced subset of $\mathcal{I}$ by replacing $A$ by $\mathrm{Sep}A$, in $\mathcal{A}$, which would contradict the minimality of $\mathcal{A}$. If $d_A=1$, then we have $I_P=0$ at all points of the set of zeros of $\mathcal{I}$. This is impossible, as by the definition of chains, $I_P\notin mathcal{I}$ and according to our hypotheses all functions that vanish on the set of zeros of $\mathcal{I}$ be long to $\mathcal{I}$.

    ii) At any point $\eta$ where $\mathrm{Sep}A$ does not vanish, for any characteristic set $\mathcal A$ defined at some other point $\hat\eta$, one may apply the implicit function theorem and get a $P$ in $\mathcal{I}$ with $\upsilon_P=\upsilon_A$ of the form $\upsilon-R$, where $R$ does not depend on $\upsilon_A$. So, on the open set where all separants of $\mathcal{A}$ do not vanish, there exists a characteristic set of the requested form.
\end{proof}

If the differential ideal $\mathcal{I}$ is generated by equations associated to~\eqref{eq::sysbis} and 
\begin{equation}\label{eq::sys_delta}
    \updelta x_i - a_i(x),
\end{equation}
where $a_i$ is differential in $\uptau$ but does not depend on any partial derivative involving $\updelta$, then $\mathcal{I}$ is such that any expression vanishing on the zeros of $\mathcal{I}$ belongs to $\mathcal{I}$ and we can freely apply the conclusions of prop.~\ref{prop::W_gen}.

\subsection{Structure of integrable vector fields}\label{subsec::structure}

After those preliminaries, we can go back to the case of an ordinary diffiety $V_\uptau$ that we turn into a partial diffiety $V_{\uptau,\updelta}$, where $\updelta$ is an infinitesimal symmetry, meaning that it commutes with $\uptau$. On may remark that the system~\eqref{eq::sysbis} defining the diffiety, together with the equations~\eqref{eq::delta} defining $\updelta$ form a characteristic set of the ideal defining $V_{\uptau,\updelta}$ for any admissible ordering $\prec_\uptau$ such that $\theta_1 x_i\prec_\uptau \theta_2 x_i$ for any $1\le i\le n$, if $\theta_1<\theta 2$ for the lexicographic ordering with $\uptau>\updelta$. This means that we write $\theta_i=\uptau^{\alpha_i}\updelta^{\beta_i}$ and say that $\theta_1<\theta_2$ if $\alpha_1<\alpha_2$ or if $\alpha_1=\alpha_2$ and $\beta_1<\beta_2$. 

We will need to work with a different ordering $\prec$.

By prop.~\ref{prop::W_gen}, we may assume that we are working at some point $\eta$ where all functions in the characteristic set $\mathcal{A}$ for $\prec_\updelta$ are linear in their main derivatives. We may assume that for any $1\le i\le n$, the $h_i$ elements of $\mathcal{A}$ of which the leading derivative is a derivative of $x_i$ are of the form $A_{i,k}$, with $\upsilon_{A_{i,k}}=\updelta^{\beta_{i,k}}\uptau^{\alpha_{i,k}}x_i$. We assume that the $\beta_{i,k}$ are decreasing with $k$, and so that the $\alpha_{i,k}$ are increasing.

\begin{proposition}\label{prop::coord_delta}
    With the above hypotheses, the set $\mathrm{Stair}\mathcal{A}$ of derivatives that are irreducible by $\mathcal{A}$ form a set of independent coordinates for $V_{\uptau,\updelta}$ in a neighborhood of the point $\eta$.
\end{proposition}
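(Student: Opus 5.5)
The proof has two halves: every derivative can be rewritten locally as an analytic function of $\mathrm{Stair}\mathcal{A}$ (generation), and these derivatives satisfy no relation (independence). Both rely on the form of $\mathcal{A}$ given by prop.~\ref{prop::W_gen}~ii). By that result, near $\eta$ each $A\in\mathcal{A}$ reads $\upsilon_A-R_A$, where $R_A$ does not depend on $\upsilon_A$, and all other derivatives occurring in $R_A$ are $\prec\upsilon_A$.

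\emph{Generation.} I argue by well-founded (Noetherian) induction on $\prec$, which is a well order on $\Upsilon$ since it is admissible on a finitely generated monoid (Dickson's lemma). Let $\upsilon=\updelta^\beta\uptau^\alpha x_i$ be a derivative not in $\mathrm{Stair}\mathcal{A}$. Then it is a derivative $\theta\upsilon_{A}$ of some main derivative, with $A\in\mathcal{A}$. Applying $\theta$ to $A$ gives an element of $\mathcal{I}$ of the form
\[
\theta\upsilon_A-\theta R_A .
\]
Since $A$ is linear in $\upsilon_A$, this element is again linear in its leader $\theta\upsilon_A$ with coefficient $1$. By admissibility, every other derivative appearing in $\theta R_A$ is $\prec\theta\upsilon_A$. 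The induction hypothesis then expresses each of those derivatives as an analytic function of finitely many elements of $\mathrm{Stair}\mathcal{A}$, near $\eta$, and hence so is $\upsilon$. This shows that $\mathcal{O}(V_{\uptau,\updelta})$ near $\eta$ is generated by functions of $\mathrm{Stair}\mathcal{A}$.

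\emph{Independence.} This is the main obstacle. Suppose some nonzero analytic $F$ depending only on stair derivatives lies in $\mathcal{I}$. I would first reduce $F$ with respect to $\mathcal{A}$ to a function $\bar F$ that is reduced, and then use the minimality of the characteristic set. A nonzero reduced element of $\mathcal{I}$ could be adjoined to $\mathcal{A}$, or substituted for one of its elements, to produce a chain of lower rank, contradicting the definition of $\mathcal{A}$ as a chain of minimal rank.

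The delicate point is the analogue of Rosenfeld's lemma. One must make sure that $\mathcal{I}$ is generated, away from separants, by $\mathcal{A}$ and its derivatives, i.e.\ that reduction to $0$ by $\mathcal{A}$ characterizes $\mathcal{I}$. Here I will use the coherence of $\mathcal{A}$: the cross-derivative conditions obtained by applying $\updelta^{\beta_{i,k}-\beta_{i,k+1}}$ and $\uptau^{\alpha_{i,k+1}-\alpha_{i,k}}$ to consecutive $A_{i,k}$ must reduce to zero. This follows because $\mathcal{A}$ is a characteristic set of the prime ideal $\mathcal{I}$, which has the vanishing property stated after prop.~\ref{prop::W_gen}, and it holds in the solved form (all separants equal to $1$).

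With coherence available, I then reduce to a Cauchy--Kovalevskaya-type existence statement. The system in solved form is passive, so for any analytic initial data assigned to the stair derivatives near their values at $\eta$ there is an analytic zero of $\mathcal{I}$ realizing those data. This is the analytic counterpart of Riquier's existence theorem, and I would invoke it via the classical Riquier--Janet theory for orthonomic passive systems, using that the separants are $1$. If some $F\in\mathcal{I}$ depended only on stair derivatives, it would have to vanish for arbitrary values of those derivatives, so $F=0$. This proves the stair derivatives are functionally independent, and together with generation they form local coordinates on $V_{\uptau,\updelta}$.
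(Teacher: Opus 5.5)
Your generation step and the first paragraph of your independence step are the paper's proof. Every non-stair derivative is the leader of some $\theta A$, and a relation among stair derivatives would contradict the minimality of $\mathcal{A}$. Your well-founded induction on $\prec$ usefully makes explicit something the paper leaves implicit, namely that $\theta R_A$ may itself contain non-stair derivatives.

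That minimality paragraph already finishes the proof, and the Rosenfeld-type statement you single out as the delicate point is not needed. A nonzero $F\in\mathcal{I}$ depending only on stair derivatives is already reduced with respect to $\mathcal{A}$. So no reduction takes place, and no description of $\mathcal{I}$ in terms of $\mathcal{A}$ enters. The only care required is to choose such an $F$ of minimal rank. Then its initial, which again involves only stair derivatives and has lower rank, is not in $\mathcal{I}$. Hence $\{A\in\mathcal{A}\mid \mathrm{rank}A<\mathrm{rank}F\}\cup\{F\}$ is a chain of lower rank than $\mathcal{A}$, a contradiction. Coherence of $\mathcal{A}$, and the local equality of $\mathcal{I}$ with the ideal generated by the derivatives of $\mathcal{A}$, are consequences of this same argument (apply it to the remainder of an element of $\mathcal{I}$), not prerequisites for it.

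The Riquier detour should be deleted, because as stated it is false here. The ordering is a $\updelta$-ordering, lexicographic with $\updelta>\uptau$. Riquier's existence theorem assumes that every independent variable has first cote $1$, which excludes such rankings. Concretely, take the non-integrable symmetry of ex.~\ref{ex::tunnel1}. Its solved system is $\updelta x=\uptau^2x$, i.e.\ the heat equation $u_s=u_{tt}$, with stair $\{\uptau^kx\mid k\in\mathbb{N}\}$. Kovalevskaya's data $u(0,t)=1/(1-t)$ give the formal solution $\sum_k s^k\partial_t^{2k}u(0,t)/k!$, whose coefficients at $t=0$ are $(2k)!/k!$, so it diverges for every $s\neq0$. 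Thus analytic data on the stair need not extend to an analytic zero. If you want an argument through zeros, use points of the jet space instead: prescribe values of the stair coordinates and extend them through the solved forms, which involves no convergence. This is, however, redundant with the minimality argument.
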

\begin{proof}
    Assume that there is a relation $P$ between derivatives in this set. Then, $P\in\mathcal{I}(V)$ and is irreducible by $\mathcal{A}$, so that we can construct a subset of $\mathcal{I}(V)$ smaller that $\mathcal{A}$. A contradiction to the minimality of $\mathcal{A}$. So this set is independent. On the other hand, this is a maximal independent set, as any derivative $\upsilon\in\Upsilon\setminus\mathrm{Stair}\mathcal{A}$ is  elementarily reducible by some derivative $\theta A$ of some $A\in\mathcal{A}$, so that it satifies the relation $\theta A(\upsilon, \ldots) =0$.
\end{proof}

We first define $e_i$ to be $\beta_{i,1}$ and $f_i$ to be $\alpha_{i,h_i}$ and write $A_{i,k}=\updelta^{\beta_{i,k}}\tau^{\alpha_{i,k}}x_i-F_{i,k}(\mathrm{Stair}_{\mathcal{A}})$, for $1\le i\le n$ and $1\le k\le h_i$. Using those coordinates and the conventions $\alpha_{i,0}=0$, $\alpha_{i,h_i+1}=\infty$, $\beta_{i,0}=\infty$ and $\beta_{i,h_i+1}=0$, one may now express $\tau$ and $\updelta$ in the following way.:
\begin{equation}\label{eq::deltabis}
\updelta=\sum_{i=1}^{n}\left(
\sum_{\kappa=0}^{h_i}\sum_{\mu=\alpha_{i,\kappa}}^{\alpha_{i,\kappa+1}-1} 
\left(
\uptau^{\mu-\alpha_{i,\kappa}} F_{i,\kappa}\partial_{\uptau^{\mu}\updelta^{\beta_{i,\kappa}-1}x_i}+
\sum_{\lambda=0}^{\beta_{i,\kappa}-2} \uptau^{\mu}\updelta^{\lambda+1}x_i\partial{\uptau^{\mu}\updelta^{\lambda}x_i}
\right)
\right)
\end{equation}
and
\begin{equation}\label{eq::taubis}
\uptau=\sum_{i=1}^{n}\left(
\sum_{\kappa=1}^{h_i+1}\sum_{\nu=\beta_{i,\kappa}}^{\beta_{i,\kappa-1}-1} 
\left(
\updelta^{\nu-\beta_{i,\kappa}} F_{i,\kappa}\partial_{\uptau^{\alpha_{i,\kappa}-1}\updelta^{\nu}x_i}+
\sum_{\ell=0}^{\alpha_{i,\kappa}-2} \uptau^{\ell+1}\updelta^{\nu}x_i\partial{\uptau^{\ell}\updelta^{\nu}x_i}
\right)
\right).
\end{equation}
This last expression defines a morphism from $V$ defined using the ordering $\prec_\updelta$ to the previously used coordinates using $\prec_\uptau$. 

\begin{remark}
    One must be careful that $\updelta$ appears in the expression of $\uptau$ that appears itself in that of $\updelta$. One may check that it is consistent, as the $\updelta$ (resp.~$\uptau$) derivatives appearing in the expression of $\uptau\upsilon$ (resp.~$\updelta\upsilon$), for any derivative $\upsilon$ are strictly smaller than $\upsilon$ for the chosen ordering, and admissible orderings are known to be well-orders. See Cox \textit{et al.} for details~\cite{Cox1997}.
\end{remark}

In the set $\mathrm{Stair}\mathcal{A}$ one may distinguish two main classes of infinite subsets.

\begin{definition}\label{def::chimney_tunnel}
    The subset of derivatives $\uptau^k \updelta^{\ell}x_i$$1\le i\le n$, $k\in\N$ and $0\le\ell<\beta_{i,h_i}$ is the \emph{chimney} of $\mathrm{Stair}(\mathcal{A})$. 

    The subset of derivatives $\uptau^k \updelta^{\ell}x_i$, for $1\le i\le n$, $\ell\in\N$ and $0\le k<\alpha_{i,1}$ is the \emph{tunnel} of $\mathrm{Stair}(\mathcal{A})$. See fig.~\ref{fig::chimney_tunnel}.
\end{definition}

\begin{figure}[H]
    \centering
    \includegraphics[width=0.40\linewidth]{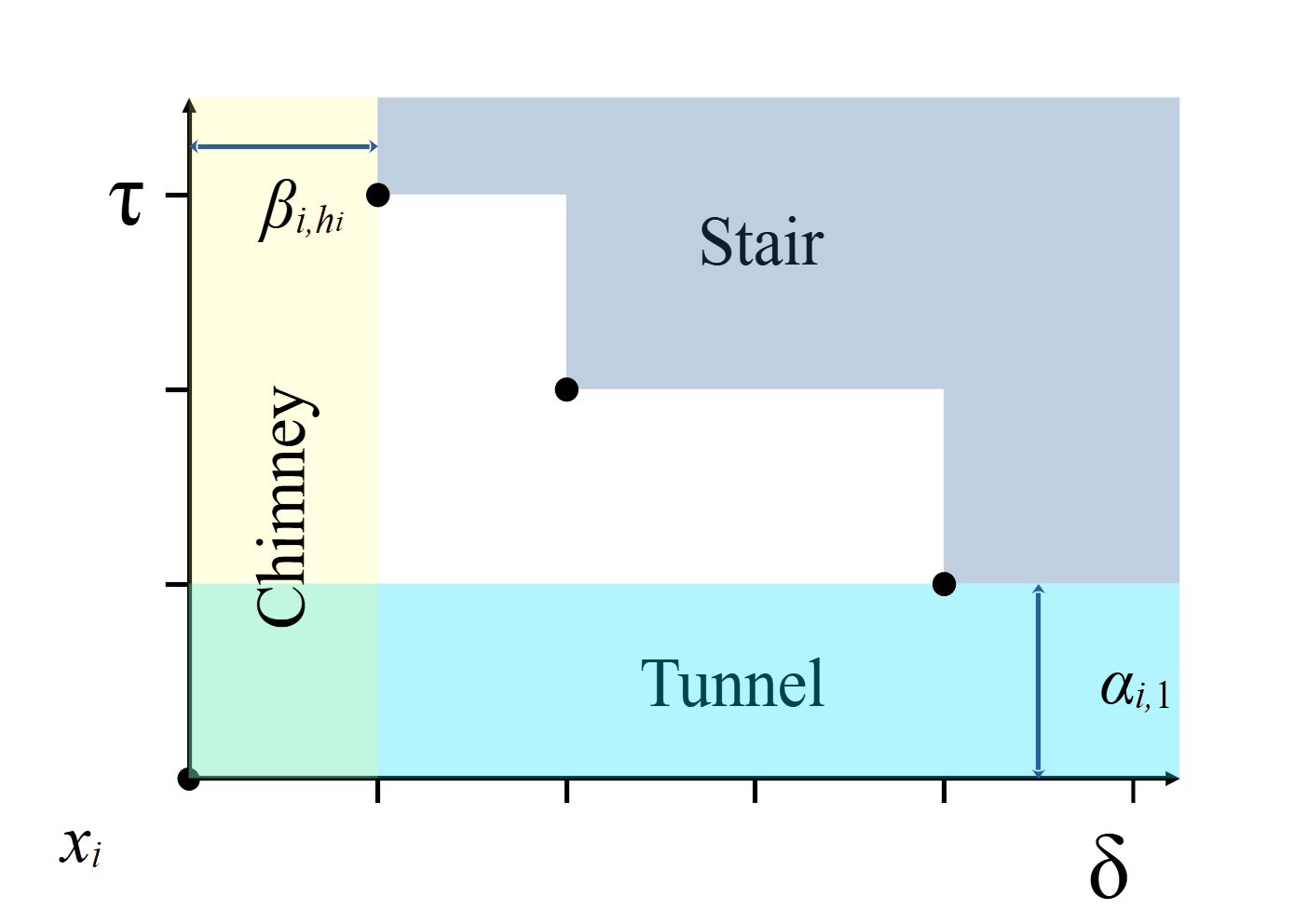}
    \caption{Chimney and tunnel for derivatives of $x_i$; main derivatives of elements of $\mathcal{A}$ are indicated with {\large$\bullet$}.}
    \label{fig::chimney_tunnel}
\end{figure}

Using an admissible ordering $\prec$ and considering a characteristic set $\mathcal{A}$ for this ordering, one may express iterated derivatives with respect to $\uptau$ in the coordinate functions of $\mathrm{stair}(\mathcal{A})$. Using those coordinates, the order of $\uptau^k x_i$ may be different from $k$. 

\begin{example}\label{ex::tau_order}
    Consider in $\mathbb{T}^2$, $\updelta=\sum_{k\in\mathbb{N}} x_1^{(k)}\partial_{x_1^{(k)}}+\sum_{k\in\mathbb{N}} x_i^{(k+2)}\partial_{x_2^{(k)}}$. For a $\updelta$-ordering $\prec$ with all derivatives of $x_2$ smaller than all derivatives of $x_1$, a characteristic set of $\mathbb{T}_{\uptau,\updelta}^2$ is $\{\uptau^2 x_1 -\updelta x_2, \updelta x_1 - x_1, \updelta^2 x_2 -\updelta x_2\}$. In this representation, the order of $\uptau^2 x_1=\updelta x_2$ is $0$.
\end{example}

The following lemma will be useful.

\begin{lemma}\label{lem::tau_order}
Let $\mathcal{A}$ be a characteristic set with respect to an arbitrary ordering.
    We denote by $\ord_{\mathcal{A},\uptau}$ the order with respect to the coordinates of $\mathrm{stair}(\mathcal{A})$, assuming that all functions of $\mathcal{A}$ are linear with respect to their leading derivatives and can be written $A=\upsilon_A - R_A$.

    We have then $\ord_{\mathcal{A},\uptau} \uptau^k x_i\le k+\max_{A\in\mathcal{A}} \ord_\uptau R_A-1$, for $1\le i\le n$.
\end{lemma}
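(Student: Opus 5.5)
We argue by induction on $k$, by tracking how a single application of $\uptau$ acts on the coordinates of $\mathrm{Stair}(\mathcal{A})$, written as in \eqref{eq::taubis}. Put $M:=\max_{A\in\mathcal{A}}\ord_\uptau R_A$. Here $\ord_{\mathcal{A},\uptau}$ of a function of the stair coordinates is the largest $\ell$ such that some coordinate $\uptau^{\ell}\updelta^{\nu}x_j$ actually occurs in it. The claim is then that $\uptau^k x_i$ involves only stair coordinates of $\uptau$-order at most $k+M-1$. For $k=0$ this is immediate: either $x_i$ is itself a stair coordinate, of order $0$, or it is the main derivative of some $A\in\mathcal{A}$ with $\upsilon_A=x_i$, in which case $x_i=R_A$. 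In both cases the bound $M-1$ is met, at least once we handle the degenerate case $M=0$ by convention or by a small separate check.

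For the inductive step, take $F$ a function of stair coordinates with $\ord_{\mathcal{A},\uptau}F\le N$. By the chain rule,
$$\uptau F=\sum_{\upsilon\in\mathrm{Stair}(\mathcal{A})}\frac{\partial F}{\partial\upsilon}\,\uptau\upsilon .$$
The first factor $\partial F/\partial\upsilon$ keeps order $\le N$. For the second factor, \eqref{eq::taubis} gives two possibilities, so we split into two cases.

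\emph{Case 1.} The coordinate $\upsilon=\uptau^{\ell}\updelta^{\nu}x_j$ has $\uptau\upsilon$ still in the stair, which corresponds to the terms $\uptau^{\ell+1}\updelta^{\nu}x_j\partial_{\uptau^\ell\updelta^\nu x_j}$. Then the order rises by exactly one, so it is at most $N+1$.

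\emph{Case 2.} The coordinate $\upsilon=\uptau^{\alpha_{j,\kappa}-1}\updelta^{\nu}x_j$ sits on the boundary of the staircase. Then $\uptau\upsilon=\updelta^{\nu-\beta_{j,\kappa}}R_{A_{j,\kappa}}$, which has to be rewritten in stair coordinates. The key sub-claim is that applying powers of $\updelta$ to $R_A$ and reducing by $\mathcal{A}$ never increases the $\uptau$-order beyond $M$. Heuristically this holds because $\updelta$ acts along the $\updelta$-direction of the staircase, as in \eqref{eq::deltabis}, whose coefficients are $\uptau^{\mu-\alpha}F$ terms. Hence Case 2 contributes order at most $\max(N,M)$.

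Combining the two cases gives $\ord_{\mathcal{A},\uptau}\uptau F\le\max(N+1,M)$. Starting from order $\le M-1$ at $k=0$, induction then yields $\ord_{\mathcal{A},\uptau}\uptau^k x_i\le \max(k+M-1,M)$, which is $\le k+M-1$ for $k\ge1$.

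The main obstacle is the sub-claim in Case 2. The expressions \eqref{eq::deltabis} contain $\uptau^{\mu-\alpha_{i,\kappa}}F_{i,\kappa}$, so $\updelta$ can itself raise the $\uptau$-order, and the orders of $\updelta$ and $\uptau$ are intertwined. To deal with this, I would run a Noetherian induction along the admissible ordering, which is a well-order as noted after \eqref{eq::taubis}. The joint statement I would try to prove is: every derivative $\uptau^a\updelta^b x_j$, once reduced, has $\uptau$-order at most $a+M-1$, or at most $M$ when $a=0$. This statement would be proved simultaneously for both derivations, using that each reduction step replaces a derivative by strictly smaller ones. If that bound turns out to be too strong, I would fall back on restricting to the chimney, where $\updelta$-exponents are bounded and the $\uptau$-steps are pure shifts.
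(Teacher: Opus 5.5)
Your Case~1/Case~2 split is the same idea as the paper's one-line proof. Via \eqref{eq::taubis}, $\uptau$ either shifts a stair coordinate, raising the order by one, or crosses a leading derivative $\upsilon_A$, where the result must be replaced by a $\updelta$-derivative of $R_A$. However, both points you leave open are places where the argument actually fails.

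\emph{The base case.} You claim $\ord_{\mathcal{A},\uptau}x_i\le M-1$. But if $x_i=\upsilon_A$, then $x_i=R_A$ can have order exactly $M$. And if $x_i$ is a stair coordinate, it has order $0>M-1$ when $M=0$. This is not a degenerate case that a convention can settle. For $\updelta=\sum_k x^{(k)}\partial_{x^{(k)}}$ on $\mathbb{T}^1$, every admissible ordering gives $\mathcal{A}=\{\updelta x-x\}$, so $M=0$, while $\uptau^k x$ is a stair coordinate of order $k>k-1$. So the constant $M-1$ cannot be reached at all. Starting from $N_0\le M$, your recursion $N_{k+1}\le\max(N_k+1,M)$ could give at best $k+M$.

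\emph{The Case~2 sub-claim.} This is the more serious gap. The claim that the reduced $\updelta^{\nu-\beta}R_A$ has $\uptau$-order at most $M$ is false for the arbitrary orderings the statement allows. On $\mathbb{T}^2$, take $\updelta x_1=\dot x_1-x_2$ and $\updelta x_2=x_2'''$. Use the admissible ordering that puts all derivatives of $x_1$ above those of $x_2$, comparing $\uptau$-exponents first on $x_1$ and $\updelta$-exponents first on $x_2$. Then:
\begin{itemize}
\item $\mathcal{A}=\{\uptau x_1-\updelta x_1-x_2,\ \updelta x_2-\uptau^3x_2\}$, so $M=3$;
\item $\mathrm{Stair}(\mathcal{A})=\{\updelta^\nu x_1\}\cup\{\uptau^kx_2\}$;
\item the coordinate $\updelta^\nu x_1$ falls exactly in your Case~2, and $\uptau\updelta^\nu x_1=\updelta^{\nu+1}x_1+\uptau^{3\nu}x_2$ has order $3\nu$.
\end{itemize}
Consequently $\uptau^kx_1=\updelta^kx_1+\sum_{j=0}^{k-1}\uptau^{k-1+2j}x_2$ has order $3k-3$. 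This exceeds $k+M-1=k+2$ for $k\ge3$, and it is not bounded by $k+C$ for any constant $C$. The hypothesis of your proposed joint Noetherian induction fails as well, since $\updelta^\nu x_2$ reduces to $\uptau^{3\nu}x_2$, which has unbounded order although its $\uptau$-exponent is $0$.

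The paper's sketch has the same hole. It claims that once the order reaches $\max_A\ord_\uptau\upsilon_A$ (here $=1$), each application of $\uptau$ adds exactly one. In this example the order instead jumps from $3$ to $6$ between $k=2$ and $k=3$.

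A correct version must restrict the ordering, and it should use $M$ rather than $M-1$. For instance, if derivatives are compared first by their $\uptau$-exponent, reductions never raise the $\uptau$-exponent, and one gets $\ord_{\mathcal{A},\uptau}\uptau^kx_i\le k$ directly. For Theorem~\ref{th::tunnel}, what is actually needed is a bound of the form $k+C$, and that bound must be proved for the specific ordering used there.
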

\begin{proof}
    It is enough to remark that, using \eqref{eq::taubis}, if $\ord_{\mathcal{A},\uptau} \uptau^k x_i<\max_{A\in\mathcal{A}} \ord_\uptau \upsilon_A$, then $\ord_{\mathcal{A},\uptau} \uptau^{k+1} x_i\le \max_{A\in\mathcal{A}} \ord_\uptau R_A$ and if $\ord_{\mathcal{A},\uptau} \uptau^k x_i\ge\max_{A\in\mathcal{A}} \ord_\uptau \upsilon_A$, then $\ord_{\mathcal{A},\uptau} \uptau^{k+1} x_i=\ord_{\mathcal{A},\uptau} \uptau^k x_i +1$.
\end{proof}

We need now to focus on a particular class of orderings.

\begin{definition}\label{def::order_delta}
    We say that an admissible ordering $\prec$ is a $\delta$-ordering
    if it is such that $\theta_1 x_i\prec\theta_2 x_i$ for any $1\le
    i\le n$, if $\theta_1<\theta_2$ for the lexicographic ordering
    with $\updelta>\uptau$, \textit{i.e.} such that
    $\theta_1=\updelta^{\beta_1}\uptau^{\alpha_1}<\theta_2=\updelta^{\beta_2}\uptau^{\alpha_2}$
    if $\beta_1<\beta_2$ or if $\beta_1=\beta_2$ and
    $\alpha_1<\alpha_2$.  
\end{definition}

We can now prove the following proposition.

\begin{theorem}\label{th::tunnel}
Still using the same definitions, we call $\hat c:=\sum_{i=1}^{n}\alpha_{i,1}$ the tunnel number and  $\hat t:=\sum_{i=1}^{n}\beta_{i,h_i}$ the chimney number.

i) For any ordering $\prec$, the chimney number is at most $m$.

ii) Using a $\updelta$-ordering $\prec_\updelta$, the following three propositions are equivalent:

{\parindent =1cm

a) The derivation $\updelta$ is integrable; 

b) The tunnel number is equal to $0$;

c) The chimney number is equal to $m$.

}    
\end{theorem}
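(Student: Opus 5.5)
I would treat both parts as statements about how the staircase $\mathrm{Stair}\mathcal{A}$ fills the two directions $\uptau$ and $\updelta$. The basic tool is prop.~\ref{prop::coord_delta}: the irreducible derivatives are independent coordinates of $V_{\uptau,\updelta}$ near $\eta$. The other input is that $V_{\uptau,\updelta}$ is, as a $\uptau$-diffiety, just $V$ (the $\updelta$-derivatives are functions on $V$). So the number of functionally independent $\uptau$-chains in the coordinates must match the differential dimension $m$ of $V$.

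For i), fix $i$ and look at the chimney of $x_i$: the derivatives $\uptau^k\updelta^\ell x_i$ with $k\in\N$ and $0\le\ell<\beta_{i,h_i}$. By prop.~\ref{prop::coord_delta} these are functionally independent. So the chimney supplies $\hat t$ infinite $\uptau$-sequences $(\uptau^k y)_{k\in\N}$, with $y=\updelta^\ell x_i$, all of them independent. Each such $y$ is a function on $V$, of finite order in the original coordinates of rem.~\ref{rem::controls}, say at most $e_0$. Then $\{\uptau^k y \mid k\le K\}$ lies in the span of $\Upsilon_{K+e_0}$, which has cardinality $n+m(K+e_0+1)$. Comparing counts, $\hat t(K+1)\le n+m(K+e_0+1)$ for all $K$, and letting $K\to\infty$ gives $\hat t\le m$. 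This is the same rank count as in lem.~\ref{lem::integrability}.

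For ii), with a $\updelta$-ordering, I would first prove b)$\Leftrightarrow$c) by a counting argument. The transcendence count in the $\uptau$-direction must be exactly $m$. The original coordinates $x_1^{(k)},\dots,x_m^{(k)}$ are expressible in $\mathrm{Stair}\mathcal{A}$, and lem.~\ref{lem::tau_order} bounds their $\mathcal{A}$-order by $k+c$. Hence, over growing $\uptau$-order $K$, the number of stair elements of $\uptau$-order at most $K$ must grow at least like $mK$. The chimney contributes $\hat t K$, and the tunnel contributes only a bounded number of $\uptau$-levels, namely $\hat c$ rows of height $\alpha_{i,1}$. However, the tunnel is infinite in the $\updelta$-direction.

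The harder direction needs the following. If $\hat c>0$, then some $x_i$ has $\alpha_{i,1}\ge1$, so $x_i,\updelta x_i,\updelta^2x_i,\dots$ are all independent coordinates. Then the $\updelta^\ell x_i$ are infinitely many functions in a space where, along the chimney direction, only $\hat t$ independent series are available. I would argue that a nonempty tunnel forces the $\updelta$-iterates to escape the chimney, which drops $\hat t$ below $m$. Conversely, if $\hat t<m$, some row is missing, and the resulting count deficit must be absorbed by an infinite tunnel.

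For a)$\Leftrightarrow$b), the direction b)$\Rightarrow$a) should be direct. If every $\alpha_{i,1}=0$, each $x_i$ has a relation $\updelta^{\beta_{i,1}}x_i=F_{i,1}$ with $F_{i,1}$ in stair coordinates. These coordinates consist only of chimney elements and finitely many others, so by the $\updelta$-ordering $F_{i,1}$ involves only lower $\updelta$-powers. Iterating, every $\updelta^\ell x_i$ is a function of the finitely many $\uptau^k\updelta^\lambda x_j$ with $\lambda<\beta_{j,1}$ and $k$ bounded by the $\uptau$-orders appearing in $\mathcal{A}$. Rewriting these via \eqref{eq::taubis} in original coordinates then gives a uniform bound $e$, which is integrability in the sense of def.~\ref{def::integrability}. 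For a)$\Rightarrow$b), lem.~\ref{lem::integrability} gives relations $\updelta^{r_i}x_i=\phi_i(Y)$ with $Y$ finite and involving no $\uptau$-derivatives. For the $\updelta$-ordering these elements have rank $\updelta^{r_i}x_i$, that is $\alpha=0$. By minimality of the characteristic set, each $x_i$ then has an element of $\mathcal{A}$ with $\alpha_{i,1}=0$, so $\hat c=0$.

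The main obstacle is the counting argument linking the tunnel and chimney numbers. It requires showing that the stair growth rate in $\uptau$ is exactly $m$, using the change of coordinates \eqref{eq::taubis} both ways.
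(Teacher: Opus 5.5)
Your part i) is the paper's count. Your sketch of b)$\Rightarrow$c) is also the paper's argument once written out: with an empty tunnel, the part of $\mathrm{Stair}(\mathcal{A})$ outside the chimney is finite, of cardinality $b$ say, and lem.~\ref{lem::tau_order} gives $rm+n\le\hat t(r+a)+b$, hence $\hat t\ge m$.

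The genuine gap is c)$\Rightarrow$b), which you leave as ``the main obstacle''. Saying that a nonempty tunnel ``forces the $\updelta$-iterates to escape the chimney'' is not an argument. Counting stair elements by their $\uptau$-level cannot supply one either: a tunnel puts infinitely many coordinates on each of its levels, so it never contradicts a lower bound. The missing idea is to count by \emph{original} order, as in i). Suppose $\hat t=m$, and let $\hat e$ bound the orders of the finitely many $\updelta^\ell x_i$ with $\ell<\beta_{i,h_i}$. Then the $m(r+1)$ chimney coordinates $\uptau^k\updelta^\ell x_i$ with $k\le r$ have order at most $\hat e+r$. All stair coordinates are independent (prop.~\ref{prop::coord_delta}), so the number $b_r$ of non-chimney stair coordinates of order at most $\hat e+r$ satisfies $m(r+1)+b_r\le(\hat e+r)m+n$, i.e.\ $b_r\le(\hat e-1)m+n$ for every $r$. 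Hence only finitely many stair coordinates lie outside the chimney. But $\alpha_{i,1}>0$ would put all $\updelta^\ell x_i$ with $\ell\ge\beta_{i,h_i}$ in the stair outside the chimney. A bound uniform in $r$ is needed because tunnel coordinates can have unbounded order, e.g.\ $\updelta^\ell x=x^{(2\ell)}$.

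In b)$\Rightarrow$a), your claim that $k$ stays bounded under iteration is unjustified, and this is exactly where the ordering matters. Reducing $\uptau^k\updelta^{\beta_{j,1}}x_j$ by $A_{j,1}$ gives $\uptau^kF_{j,1}$, so each iteration can add $\ord_\uptau F_{j,1}$ to the order. With def.~\ref{def::order_delta} read literally ($\updelta$-exponents compared first), the step is false. On $\mathbb{T}^1$ with $\updelta x=\ddot x$, the characteristic set $\mathcal{A}=\{\updelta x-\uptau^2x\}$ has tunnel number $0$ and chimney number $1=m$, yet $\updelta^\ell x=x^{(2\ell)}$.

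What you need is the ordering actually used in ex.~\ref{ex::tunnel1}, where the leader is $\uptau^2x$: every pure derivative $\updelta^kx_j$ is placed below every derivative involving $\uptau$. Then an element of $\mathcal{A}$ with $\alpha=0$ involves only pure $\updelta$-derivatives. So tunnel number $0$ means the pure $\updelta$-part of the stair is finite, every $\updelta^\ell x_i$ is a function of it, and the orders are bounded.

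Your a)$\Rightarrow$b) also needs a small repair. The relation from lem.~\ref{lem::integrability} mixes all the $\updelta^kx_j$, so its leader need not be $\updelta^{r_i}x_i$. It is simpler to note that $\alpha_{i,1}>0$ puts every $\updelta^\ell x_i$ in the stair. By prop.~\ref{prop::coord_delta} they would then be independent, contradicting their bounded order.
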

\begin{proof} i) Let $\hat e$ be the maximal order of the $\updelta^k x_i$, for $0\le k<b_{i,h_i}$ and all $1\le i\le n$. There are $\hat c(r+1)$ independent functions $\uptau^k \updelta^{\ell}x_i$ for $0\le k\le r$ and $0\le\ell<\beta_{i,h_i}$ in the chimney, that are of order at most $\hat e+r$ in $\uptau$, so depend on at most $(\hat e+r)m+n$ $\uptau$-derivatives of the $x_i$, which imply 
\begin{equation}\label{eq::c_m}
    \hat c(r+1)\le (\hat e+r)m+n,
\end{equation}
so that $\hat c\le m$.

    ii) The equivalence of a) and b) is a straightforward consequence of definition~\ref{def::integrability}. Indeed, as their ordering is bounded, the derivatives $\updelta^k x_i$, for all $1\le i\le n$, $k\in\mathbb{N}$ cannot be independent, which is a) $\Leftrightarrow$ b). Reciprocally, if they are not independent, there exists $k_0$ such that $\updelta^{k_0} x_i$ is a function of the $\updelta^k x_i$, for $k<k_0$ and $\max_{k\in\mathbb{N}}\mathrm{ord}\updelta^k x_i=\max_{k<k_0}\mathrm{ord}\updelta^k x_i$ and so is bounded.

    By i), we know that $\hat c\le m$. Let $b$ be cardinal of the subset $S_2$ of elements of $\mathrm{stair}(\mathcal{A})$ not in the chimney. Let us first prove that if $\hat c=m$, then $b<\infty$, which means c) $\Rightarrow$ b). We proceed like for \eqref{eq::c_m} in the proof of i), denoting by $b_r$ the number of independent functions of order at most $\hat e+r$ in $S_2$. We have 
    \begin{equation}\label{eq::c_m2}
    m(r+1)+b_r\le (\hat e+r)m+n\>\Rightarrow\> b_r\le (\hat e-1)m+n,
\end{equation}
for all $r\in\mathbb{N}$, so that $b\le (\hat e-1)m+n$.
    To prove that b) $\Rightarrow$ c), we need prove that $b<\infty$ $\Rightarrow$ $\hat c=m$. By lem.~\ref{lem::tau_order}, the $rm+n$ independent $\tau$-derivatives of order at most $r$ are functions of at most $\hat c (r+a)+b$ independent functions, where $a:=\max_{A\in\mathcal{A}} \ord_\uptau R_A-1$. We have then 
    $$
    rm+n\le \hat c (r+a)+b,
    $$
    so that $\hat c\ge m$.
\end{proof}

\begin{example}\label{ex::tunnel1} We consider the infinitesimal symmetry $\updelta=\sum_{k\in\mathbb{N}} x^{(k+2)}\partial_{x^{(k)}}$ of $\mathbb{T}^1$. It is not integrable, as its chimney number is $0$ and its tunnel number is $2$. Indeed, for $\prec_{\updelta}$ its characteristic set is $\tau^2x=\updelta x$. See fig.~\ref{fig::tunnel1}.

\begin{figure}[H]
    \centering
    \includegraphics[width=0.25\linewidth]{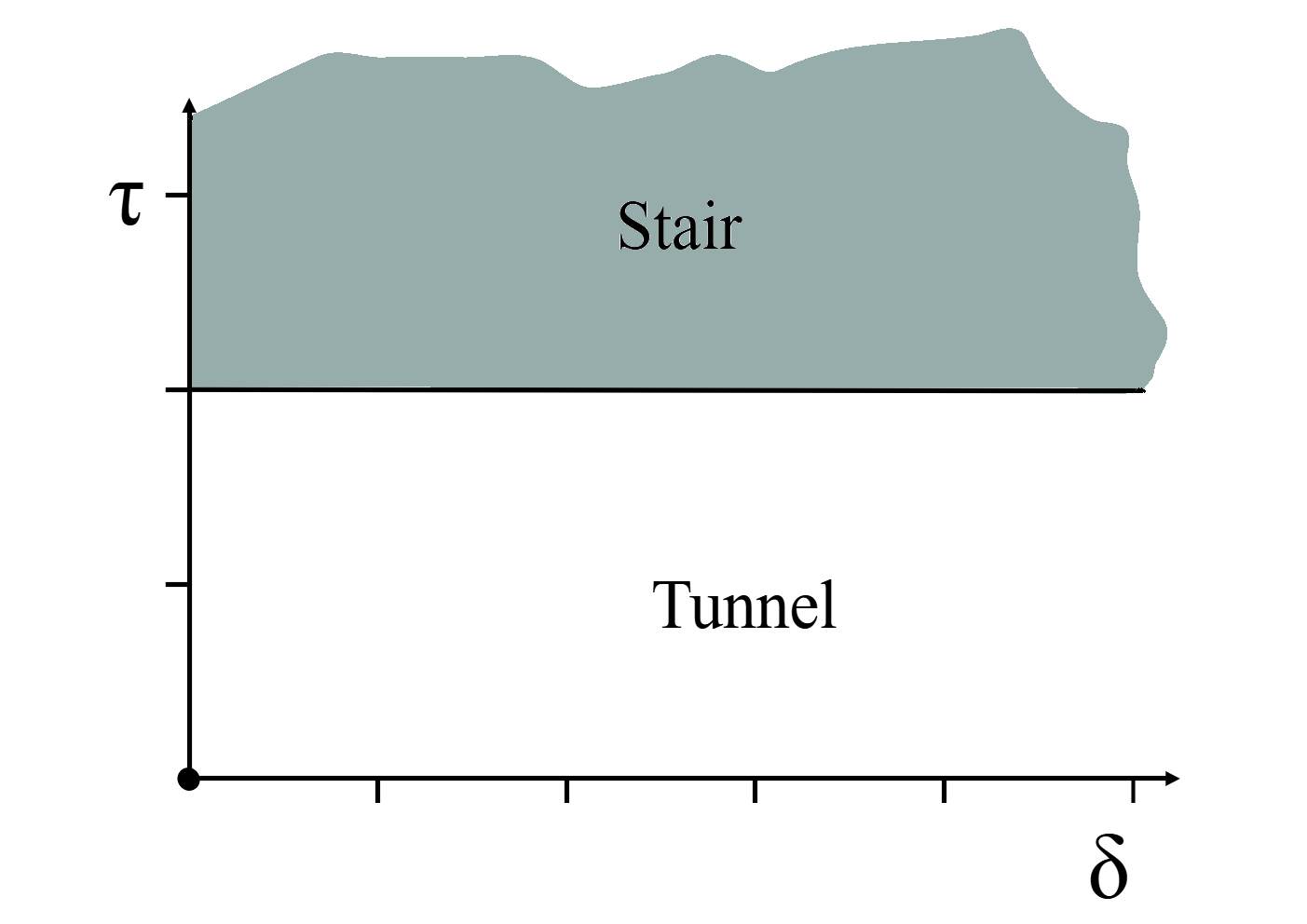}
    \caption{Tunnel and stair from ex.~\ref{ex::tunnel1}}
    \label{fig::tunnel1}
\end{figure}
\end{example}

\begin{example}\label{ex::chimney} Taking the infinitesimal symmetry defined by $\updelta x_1 = x_1+x_2''$ and $\updelta x_2= x_2$, a characteristic set for a $\delta$-ordering with all derivatives of $x_2$  greater that all derivatives of $x_1$ is $\{\updelta^2 x_1=2\updelta x_1 - x_1, \updelta x_2=x_2, \uptau^2 x_2=\updelta x_1 - x_1\}$, so that the tunnel number is $0$ and the chimney number is $2$: $\updelta$ is integrable.  
See fig.~\ref{fig::chimney}.
\end{example}

\begin{figure}[H]
    \centering
    \includegraphics[width=0.25\linewidth]{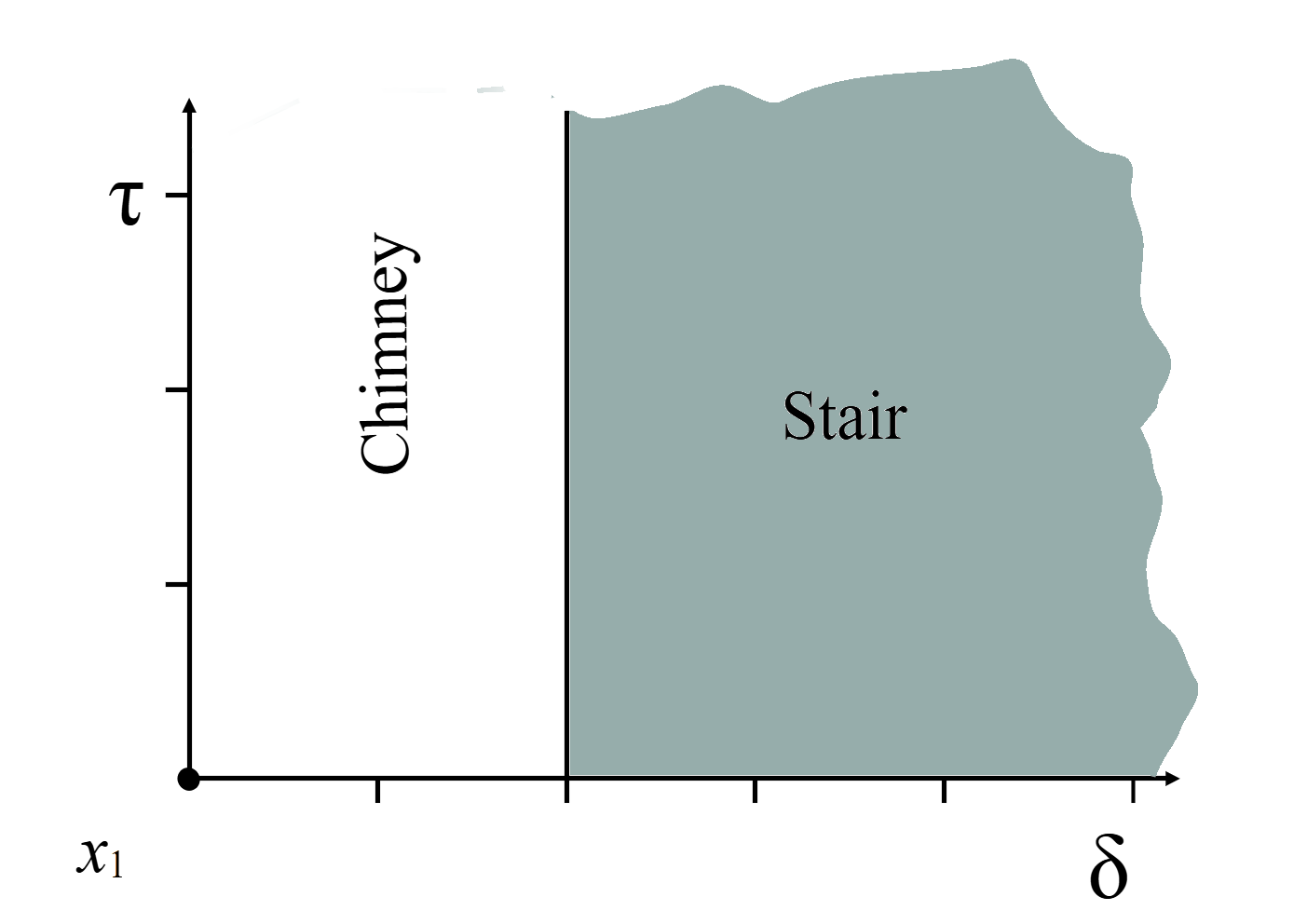}\hskip 1cm
    \includegraphics[width=0.25\linewidth]{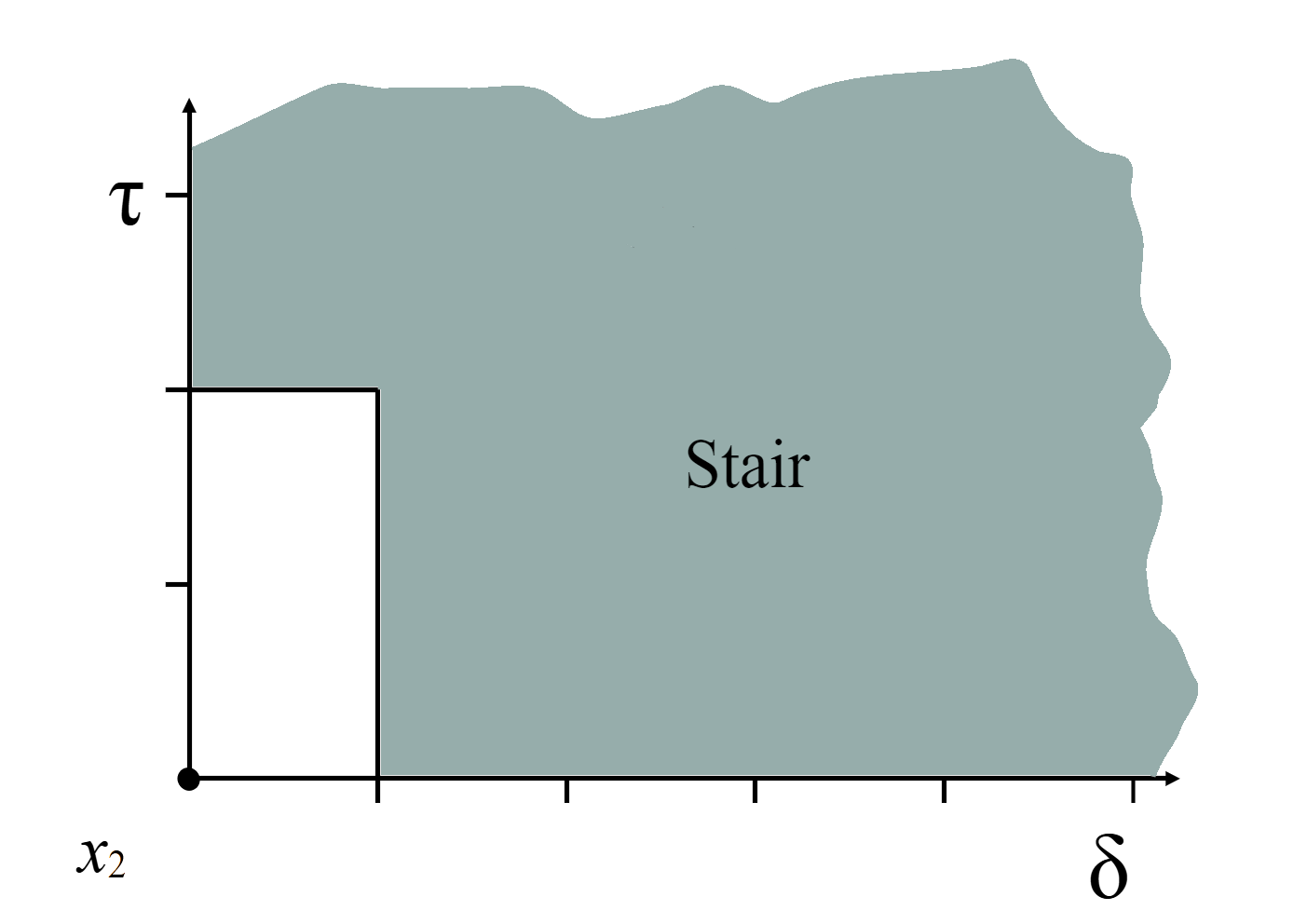}
    \caption{Chimney and stair from ex.~\ref{ex::chimney}}
    \label{fig::chimney}
\end{figure}

Using the next theorem, we will be able to bound the orders $r_i$ in lem.~\ref{lem::integrability}. For its proof, we will need to use Jacobi's bound. See \cite{FO-BS-07} for the diffiety setting and \cite{Ollivier2023} for more details in the algebraic case.

\begin{theorem}\label{th::bound_ord}
Using the same notations as above, assume that $\updelta$ is an integrable symmetry such that $\updelta x_i=a_i$ and that $\max_{i=1}^{n}\ord_\uptau a_i=\varpi$. Then, for all $1\le i\le n$, we have 
$$
\left(\sum_{i=1}^{n}\beta_{i,1}\right)\le\frac{n(n+1)}{2}\varpi+n-m.
$$    
\end{theorem}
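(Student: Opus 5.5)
We use a $\updelta$-ordering $\prec_\updelta$ and the characteristic set $\mathcal{A}$ of the ideal defining $V_{\uptau,\updelta}$. Since $\updelta$ is integrable, th.~\ref{th::tunnel}~ii) gives tunnel number $0$ and chimney number $m$. So every $x_i$ has a pure $\updelta$-element $A_{i,1}$ with main derivative $\updelta^{\beta_{i,1}}x_i$, i.e. $\alpha_{i,1}=0$. The quantity $\sum_i\beta_{i,1}$ counts the pure $\updelta$-derivatives $\updelta^k x_i$, $0\le k<\beta_{i,1}$, in $\mathrm{Stair}(\mathcal{A})$. By prop.~\ref{prop::coord_delta} these are functionally independent, so $\sum_i\beta_{i,1}$ equals the transcendence degree, i.e. the generic Jacobian rank $\rho$ of the family $\{\updelta^k x_i\}$ as functions of $\uptau$-coordinates on $V$. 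This matches the integers $r_i$ of lem.~\ref{lem::integrability}. The task is therefore to bound this rank.

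The natural way is to bound the $\uptau$-orders of the $\updelta^k x_i$ and count variables, as in \eqref{eq::c_m}. A crude bound $\ord\updelta^k x_i\le k\varpi$ is useless, because $k$ is unbounded. Instead we will use Jacobi's bound, as the text announces, following \cite{FO-BS-07,Ollivier2023}. Consider the system in the $n$ unknowns $x_i$ with the two derivations $\uptau,\updelta$. Eliminating $\uptau$ means looking at the ODE system in $\updelta$ satisfied by the $x_i$, where the coefficients involve $\uptau$-derivatives. Dually, we regard the relations $\updelta^{k}x_i=\updelta^{k-1}a_i$ as an ordinary differential system in $\uptau$, relating the new unknowns $y_{i,k}:=\updelta^k x_i$ for $k\le n$.

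The key intermediate claim is the following. The independent family $\{\updelta^k x_i\}$ stabilizes by stage $n$, in the sense that once some $\updelta^{k}x_i$ becomes dependent on lower ones, no new independence appears for $x_i$. The $\uptau$-order matrix of the system $y_{i,k}-\updelta^{k-1}a_i$ has entries bounded by $\varpi$ per application of $\updelta$. Jacobi's bound on such a triangular order matrix yields $\ord_\uptau\updelta^k x_i\le \min(k,n)\varpi$ up to reduction modulo $\mathcal{A}$. Summing over the staircase then gives $\sum_{k=1}^{n}k\varpi=\frac{n(n+1)}2\varpi$ for the contribution of derivatives of order $\ge 1$. The remaining count involves order-$0$ functions on the state space. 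These depend only on $x_{m+1},\dots,x_n$ and $x_1,\dots,x_m$, but the $m$ chimney directions already absorb the free control parameters, which accounts for the term $n-m$.

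Concretely, the steps are as follows. First, fix $\prec_\updelta$ and apply th.~\ref{th::tunnel}. Second, identify $\sum\beta_{i,1}$ with the rank $\rho$ using prop.~\ref{prop::coord_delta}. Third, bound $\rho$ by the Jacobi number of the order matrix $(\ord_{x_j}a_i)$, iterated, following the argument of th.~\ref{th::tunnel}~i) to convert orders into a count of independent $\uptau$-jets. Fourth, subtract the $m$ dimensions lost to the chimney, using eq.~\eqref{eq::c_m2}. The main obstacle is the third step: showing that the relevant Jacobi bound is $\frac{n(n+1)}2\varpi$ rather than something growing with $k$. I would first try to prove that the $i$-th newly independent pure $\updelta$-derivative raises the maximal order by at most $\varpi$. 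The reason is that each new element of the staircase must involve a strictly larger $\uptau$-jet, otherwise it would be a function of the previous ones; there are at most $n$ such stages. The residual dimension count $n-m$ would then come from the $n+m\cdot 0$ base coordinates minus the $m$ chimney columns.
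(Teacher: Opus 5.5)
\paragraph{Review.} There is a genuine gap at the heart of your plan, in the third step. You do not prove that the family $\{\updelta^k x_i\}$ ``stabilizes by stage $n$'', nor that Jacobi's bound gives $\ord_\uptau\updelta^k x_i\le\min(k,n)\varpi$. The heuristic you offer for these claims is false. A newly independent $\updelta^k x_i$ need not involve a strictly larger $\uptau$-jet, and nothing limits the number of new independences to $n$.

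For example, take the tame (hence integrable) symmetry of $\mathbb{T}^2$ given by $\updelta x_1=x_1^2$, $\updelta x_2=x_1^{(\varpi)}$. Then $\updelta^k x_2=(k-1)!\,\uptau^{\varpi}(x_1^k)$. So $x_2,\updelta x_2,\dots,\updelta^{\varpi}x_2$ are independent, and all but the first have $\uptau$-order exactly $\varpi$.

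Even granting your order bound, counting coordinates would only give $\rho\le mn\varpi+n$. The passage to $\sum_{k=1}^n k\varpi$ by ``summing over the staircase'' treats $\uptau$-orders as if they were numbers of independent functions. The term $n-m$ is asserted rather than derived.

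A side remark: $\sum_i\beta_{i,1}=\rho$ is false in general. In the same example, ranking all derivatives of $x_2$ above those of $x_1$ gives the characteristic set $\{\updelta x_1-x_1^2,\ \updelta x_2-\uptau^{\varpi}x_1\}$, so $\sum_i\beta_{i,1}=2<\rho=\varpi+2$. Only the inequality $\sum_i\beta_{i,1}\le\rho$ holds, but that direction is all you need.

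\paragraph{Missing idea.} You never need to control $\updelta^k x_i$ for large $k$. Integrability makes the chimney number exactly $m$, so only the $m$ chimney generators $\updelta^k x_i$, $0\le k<\beta_{i,h_i}$, matter. For these $k<m$, so the trivial bound $\ord_\uptau\updelta^k x_i\le k\varpi$ is already finite.

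The paper treats these generators as independent right-hand sides. It forms the square $\uptau$-system in $x_1,\dots,x_n$ consisting of the $m$ equations $F_{i,k}(x)=\updelta^k x_i$ together with the $n-m$ equations of \eqref{eq::sysbis}. This system is quasi-regular because the chimney and all its $\uptau$-derivatives are independent. Jacobi's bound then caps its order by $\varpi\sum_{\mathrm{chimney}}k+n-m$. Every stair derivative outside the chimney is a function of chimney derivatives and of these initial data. Independence of $\mathrm{Stair}(\mathcal{A})$ therefore bounds their number by that order.

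Your own jet count would also close the gap if refined. In \eqref{eq::c_m2}, count the chimney elements of order $\le N$ as $\sum_{\mathrm{chimney}}(N+1-\ord_\uptau\updelta^k x_i)$ instead of $m(N+1)$. This gives the same bound on the non-chimney part directly.

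Finally, you must add the $m$ pure $\updelta$-derivatives lying in the chimney. This yields $\sum_i\beta_{i,1}\le\binom{m}{2}\varpi+n$, which implies the stated inequality when $\varpi\ge1$. For $\varpi=0$ the inequality as stated fails: $\updelta x_i=x_i$ on $\mathbb{T}^n$ gives $\sum_i\beta_{i,1}=n>n-m=0$.
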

\begin{proof} By th.~\ref{th::tunnel}, if $\updelta$ is integrable, then the chimney number $\sum_{i=1}^{n}\beta_{i,h_i}$ is equal to $m$. This means that the $\updelta^k x_i$ for $1\le i\le n$ and $0\le k<\beta_{i,h_i}$ and all their $\uptau$ derivatives are functionally independent. Let $F_{i,k}$ be the expression of $\updelta^k x_i$ as a function of $\uptau$ derivatives of the $x_i$. We consider the $\uptau$ differential system 
$$
F_{i,k}=\updelta^k x_i,\> 1\le i\le n,\> 0\le k<\beta_{i,h_i},
$$   
completed with the initial system~\eqref{eq::sysbis}, as a system in the variables $x_i$. This system defines the graph of the application $\uptau^\kappa \updelta^k x_i\mapsto \uptau^\kappa F_{i,k}$, so it generates a prime ideal. In the neighborhood of any point in a dense open set, there exists a characteristic set $\mathcal{B}=\{x_i^{(r_i)}-G_i|1\le i\le m\}\cup\{\dot x_i-f_i\}$ of the ideal generated by this system, where $G$ depends of the $\uptau$ derivatives of the $\updelta^k x_i$ for $1\le i\le n$ and $0\le k<\beta_{i,h_i}$ and of the $\uptau^{\kappa}x_i$, with $0\le \kappa<r_i$, the remaining elements corresponding to~\eqref{eq::sysbis}. The functional independence of the $\updelta^k x_i$ and all their $\uptau$ derivatives implies that this system is quasi-regular~\cite[def.~4.1 (0.1)]{FO-BS-07}. So, using Jacobi's bound~\cite[th.~4.3 (0.3)]{FO-BS-07}\footnote{Due to lack of knowledge, we cannot use the full power of Jacobi's bound and need to retreat to a simpler consequence of it, which is closer to Ritt's ``differential analog of Bézout's bound''~\cite[p.~135]{Ritt1950}.}, its order which is $n-m+\sum_{i=1}^m r_i$ is at most the sum of the orders of the equations, \textit{i.e.} at most $n(n+1)\varpi/2+n-m$. Indeed, $\ord_{\uptau}F_{i,k}\le k\varpi$. This means that $\sum_{i=1}^n r_i\le n(n+1)\varpi/2+n-m$.

Recursively replacing $x_i^{(r_i+\ell)}$ by $G_i^{(\ell)}$, we may express the derivatives $\upsilon$ in the set $E:=\mathrm{Stair}(\mathcal{A})\setminus\mathrm{Chimney}(\mathcal{A})$, as functions $Q_{\upsilon}$ of the derivatives in $\mathrm{Chimney}(\mathcal{A})$ and of the $x_i^{(\kappa)}$, for $0\le\kappa<r_i$: $Q_{\upsilon}=\upsilon$. Assume that $\sharp E>\sum_{i=1}^n r_i$, then we can eliminate them and get a function in $\mathrm{I}$ with a leading derivative in $\mathrm{Stair}(\mathcal{A})$, which would contradict the minimality of $\mathcal{A}$.

This implies that 
$$
\sum_{i=1}^{n}\beta_{i,1}\le\sum_{i=1}^n r_i\le\frac{n(n+1)}{2}\varpi+n-m.
$$
\end{proof}

This theorem allows to express integrability using partial differential equations.

\begin{corollary}\label{cor::int_sys}
    A symmetry defined by $\updelta x_i=a_i$, where the $a_i$ depend on derivatives obtained by iterations of $\uptau$ up to order at most $\varpi$, is integrable iff for all $1\le i\le n$, the $\updelta^k x_i$, $0\le k\le n-m+n(n+1)\varpi/2$ are functionally dependent, expressed as functions of the $\Upsilon_0:=\{\uptau^{\kappa} x_j,\>1\le j\le n,\> 0\le\kappa\le q(q+1)\varpi/2\}$, with $q=n-m+x(n+1)\varpi/2$. This is expressed by the polynomial PDE system
    {
    \begin{equation}\label{eq::integ_cond}
        \left|\frac{\partial\uptau^\kappa x_i}{\partial_\upsilon}\Big| 
        \renewcommand{\arraystretch}{0.6}
        \begin{array}{l}\scriptscriptstyle 
            1\le i\le n,\\
            \scriptscriptstyle 0\le k\le n-m+x(n+1)\varpi/2 \\
             \scriptscriptstyle \upsilon\in U
        \end{array}
        \right|=0,\> 1\le i\le n,\> U\subset\Upsilon_0,\> \sharp U=n-m+x(n+1)\varpi/2+1
    \end{equation}}
\end{corollary}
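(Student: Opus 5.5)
The plan is to prove the two implications separately, with th.~\ref{th::bound_ord} supplying the quantitative bound that makes the criterion finite. Throughout I set $q:=n-m+n(n+1)\varpi/2$. I read the ``$x(n+1)$'' in the statement as $n(n+1)$. I also read the displayed minors as Jacobian minors of the functions $\updelta^k x_i$ (not $\uptau^\kappa x_i$) with respect to the $\upsilon\in U$.

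\emph{Orders of the iterates.} A straightforward induction shows $\ord_\uptau\updelta^k x_i\le k\varpi$. Indeed, $\updelta$ raises the $\uptau$-order of a function by at most $\varpi$: by \eqref{eq::delta}, $\updelta x_j^{(\kappa)}=a_j^{(\kappa)}$ has order at most $\kappa+\varpi$. Hence for $0\le k\le q$ all the $\updelta^k x_i$ are functions on the finite-dimensional space of jets of order at most $q\varpi\le q(q+1)\varpi/2$. So it is legitimate to express them in the coordinates $\Upsilon_0$.

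\emph{Necessity.} Assume $\updelta$ is integrable. Take a $\updelta$-ordering $\prec_\updelta$ in which, for fixed $i$, all derivatives of $x_i$ involving a higher power of $\updelta$ come later. With the notations of subsection~\ref{subsec::structure}, th.~\ref{th::bound_ord} gives $\beta_{i,1}\le\sum_j\beta_{j,1}\le q$ for each $i$. The element $A_{i,1}$ of the characteristic set has leading derivative $\updelta^{\beta_{i,1}}\uptau^{\alpha_{i,1}}x_i$. By th.~\ref{th::tunnel}~ii) the tunnel number vanishes, so $\alpha_{i,1}=0$. Thus $\updelta^{\beta_{i,1}}x_i$ is a function of derivatives in $\mathrm{Stair}\mathcal{A}$ that are $\prec_\updelta$-smaller. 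Rewriting both sides back in the $\uptau$-coordinates via \eqref{eq::taubis} yields a nontrivial functional relation among $\updelta^0x_i,\ldots,\updelta^{q}x_i$ and finitely many $\uptau$-jets. Functional dependence is then expressed by the vanishing of all maximal-size minors of the Jacobian with respect to $\Upsilon_0$, which is exactly \eqref{eq::integ_cond}.

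\emph{Sufficiency.} Assume that for each $i$ the $\updelta^kx_i$, $0\le k\le q$, are functionally dependent. On a dense open set, the rank of the Jacobian is then constant. Arguing as in the proof of lem.~\ref{lem::integrability}, I choose $k_0\le q$ minimal with $\updelta^{k_0}x_i=\phi(\updelta^kx_i,\,k<k_0)$ locally. Applying $\updelta$ and using that $\updelta$ is a derivation, every $\updelta^{k}x_i$ with $k\ge k_0$ becomes a function of $\updelta^0x_i,\ldots,\updelta^{k_0-1}x_i$. Hence $\max_k\ord_\uptau\updelta^kx_i\le (k_0-1)\varpi<\infty$ for every $i$. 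The same bound holds for the controls, since by rem.~\ref{rem::controls} we have $u_i=\dot x_i$ and $\updelta u_i=\uptau\updelta x_i$. This is definition~\ref{def::integrability}.

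\emph{Main obstacle.} The delicate step is necessity: the dependence in the $\updelta$-ordering must be turned into a genuine dependence among the $\updelta^kx_i$ for the fixed index $i$ alone, with the other unknowns eliminated. I would handle this as in the end of the proof of th.~\ref{th::bound_ord}. The chimney elements together with their $\uptau$-derivatives are independent, which leaves only finitely many other stair coordinates. A counting argument then forces a relation involving only the $\updelta^kx_i$, $k\le q$. The size bound $q(q+1)\varpi/2$ on $\Upsilon_0$ is generous and needs no sharp estimate.
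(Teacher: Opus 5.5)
The paper gives no separate proof of this corollary. It presents it as an immediate consequence of th.~\ref{th::bound_ord}, and that is also your route. Your sufficiency direction is correct; it does not even use the bound $q$. The necessity direction, however, has a genuine gap at exactly the step you flag.

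From $\beta_{i,1}\le q$ and $\alpha_{i,1}=0$ you only get $\updelta^{\beta_{i,1}}x_i=F_{i,1}$. Here $F_{i,1}$ may involve stair coordinates $\uptau^\alpha\updelta^\lambda x_i$ with $\alpha>0$ and derivatives of the other $x_j$. A "relation among the $\updelta^k x_i$ and finitely many $\uptau$-jets" is vacuous, since every $\updelta^k x_i$ is trivially a function of $\uptau$-jets.

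What must be shown is that the functional rank $\rho_i$ of the whole orbit $\{\updelta^k x_i\}_{k\in\mathbb{N}}$ is at most $q$. By your own sufficiency argument, this is equivalent to the dependence of the first $q+1$ iterates. But $\beta_{i,1}$ does not control $\rho_i$. Take $\mathbb{T}^2$ with $\updelta x_1=x_1$, $\updelta x_2=\dot x_1$, which is integrable with $\varpi=1$. Use a $\updelta$-ordering with all derivatives of $x_1$ below those of $x_2$. The characteristic set is $\{\updelta x_1-x_1,\ \updelta x_2-\uptau x_1\}$, so $\beta_{2,1}=1$. Yet $x_2$ and $\updelta x_2=\dot x_1$ are independent, so $\rho_2=2>\beta_{2,1}$.

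Your counting repair does not close this either. Th.~\ref{th::bound_ord} bounds $\sum_j\beta_{j,1}$, and its proof bounds $\sharp(\mathrm{Stair}\,\mathcal{A}\setminus\mathrm{Chimney})$. The iterates $\updelta^k x_i$, however, may depend on arbitrarily many chimney coordinates $\uptau^\alpha\updelta^\ell x_j$; in the example, $\updelta x_2=\uptau x_1$ is itself a chimney coordinate. Nothing in that argument bounds how many of them the orbit reaches. You would need a genuinely new estimate, for instance a bound in terms of $n$, $m$, $\varpi$ on the iterated order $e$, or on the number of chimney coordinates the $\updelta^k x_i$ involve. The paper supplies no such estimate.

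Note also that the bound cannot simply be quoted in degenerate cases. On $\mathbb{T}^1$, $\updelta x=x$ is integrable with $\varpi=0$, so $q=0$, yet $\{x\}$ is not functionally dependent. The same example has $\beta_{1,1}=1$, which already violates th.~\ref{th::bound_ord}. So any correct proof of necessity must use $\varpi\ge1$ somewhere.
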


We conclude this subsection with a special result in the case $m=1$, \textit{i.e.} with a single control.

\begin{theorem}\label{th::one_control}
    If $m=1$ and $\updelta$ is an integrable symmetry iff $\ord\updelta=0$.
\end{theorem}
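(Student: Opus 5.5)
We have to prove both directions: if $m=1$, an infinitesimal symmetry $\updelta$ is integrable iff $\ord\updelta=0$, i.e. iff every $a_i=\updelta x_i$ depends only on the state variables $x_1,\ldots,x_n$. We work in the normal form of rem.~\ref{rem::controls}, with $u=\dot x_1$, so that the coordinates are $x_1^{(k)}$, $k\in\mathbb{N}$, and $x_2,\ldots,x_n$.

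For the direction ``integrable $\Rightarrow$ order $0$'', we invoke th.~\ref{thm::ord_one_control} directly. It states that no $\updelta x_i$ depends on $u$ or any $u^{(k)}$. Since $u=\dot x_1$, this means each $a_i$ is a function of $x_1,\ldots,x_n$ only. Hence $\ord_\uptau a_i=0$ for all $i$, i.e. $\ord\updelta=0$. If more justification is wanted, the core of th.~\ref{thm::ord_one_control} is this. If $\ord_{x_1}a_1=r>0$ with $a_1$ linear in its top derivative (generically, via the separant), then $\ord\updelta^k x_1=kr$ grows without bound, contradicting def.~\ref{def::integrability}. The remaining $x_i$ are then handled through lem.~\ref{lem::e_and_flow}.

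For the converse ``order $0$ $\Rightarrow$ integrable'', suppose every $a_i$ depends only on $x=(x_1,\ldots,x_n)$. Then $\updelta$ restricted to the state variables is the finite-dimensional vector field $\sum_i a_i(x)\partial_{x_i}$. Its iterates $\updelta^k x_i$ are therefore functions of $x$ alone, so $\ord_\uptau\updelta^k x_i=0$ for all $k$ and $i$. It remains to treat the control coordinates. By rem.~\ref{rem::controls}, the coefficient of $\partial_{x_1^{(k)}}$ is $a_1^{(k)}$, so $\updelta u=\updelta\dot x_1=\uptau a_1$. We must bound $\ord_\uptau\updelta^k u$. Commutation of $\updelta$ and $\uptau$ gives $\updelta^k u=\uptau(\updelta^k x_1)$. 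Since $\updelta^k x_1$ is a function of $x$ alone, its $\uptau$-derivative involves only $x$ and $\dot x_1=u$ (the $\dot x_i$ for $i>1$ equal $f_i(x,u)$). So $\ord_\uptau\updelta^k u\le 1$, or $0$ when counted in the control $u$. Hence the iterated order $e$ is finite and $\updelta$ is integrable.

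The only delicate point is the first direction. We rely on th.~\ref{thm::ord_one_control}, and its argument for the non-flat variables $x_i$, $i>1$, passes through lem.~\ref{lem::e_and_flow} and the flow. If that feels insufficient, an alternative is th.~\ref{th::tunnel}. With $m=1$, integrability forces the chimney number to be $1$, so the chimney consists of the $\uptau$-derivatives of a single $x_{i}$ with $\beta_{i,h_i}=1$. Combining this with the growth $\ord\updelta^k x_1=kr$ and the counting inequality \eqref{eq::c_m2} should rule out $\varpi>0$. I expect this counting step to be the main obstacle.
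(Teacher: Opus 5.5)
Your proof is correct, but for ``integrable $\Rightarrow$ $\ord\updelta=0$'' you take a different route from the paper. You reduce it to th.~\ref{thm::ord_one_control}, which handles the non-control state variables through the flow $g_{x_i}(s)$ and lem.~\ref{lem::e_and_flow}. The paper's proof is self-contained and purely algebraic. If some $a_{i_0}$ has maximal order $e>0$, consider the commutation relation
\[
\uptau a_{i_0}=\updelta\dot x_{i_0}=\sum_{i=1}^n\partial_{x_i}f_{i_0}\,a_i+\partial_{\dot x_1}f_{i_0}\,\uptau a_1 .
\]
Its left side has order $e+1$, while $\sum_i\partial_{x_i}f_{i_0}\,a_i$ has order at most $e$, so $\ord a_1=e$. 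A recurrence then gives $\ord\updelta^k x_1=ke$, which contradicts integrability. This transfer step is exactly what your ``core'' sketch leaves implicit. It shows that $r=\ord a_1$ is the maximal order among the $a_i$, so the other $a_i$ can neither mask the growth in $x_1$ nor have positive order while $a_1$ has order $0$. Your sketch also does not need a genericity or separant assumption: for $j\ge 1$, $\uptau^j a_1$ is automatically linear in its top derivative with coefficient $\partial_{x_1^{(e)}}a_1$, so the coefficient of $x_1^{(ke)}$ in $\updelta^k x_1$ is $(\partial_{x_1^{(e)}}a_1)^k$. Citing th.~\ref{thm::ord_one_control} makes your proof shorter, but it inherits that theorem's less transparent flow-based argument. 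The paper's commutation argument avoids flows entirely and makes your tentative detour through th.~\ref{th::tunnel} and the counting inequality unnecessary. For the converse you follow the paper. Your extra check that $\updelta^k u=\uptau(\updelta^k x_1)$ has order at most $1$ is correct; the paper leaves it implicit.
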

\begin{proof}
If $\ord\updelta=0$, then for any $k\in\mathbb{N}$ and any $1\le i\le n$, $\ord\updelta^k x_i=0$, so that $\updelta$  is integrable.

    Without loss of generality, assume that $u_1=x_1$ and that $\ord \updelta x_{i_0}=e>0$, as in \eqref{eq::sysbis}. Then $\updelta \dot x_{i_0}=\sum_{i=1}^n \partial_{x_i} f_{i_0} \updelta x_i+\partial_{\dot x_1} f_{i_0} \updelta \dot x_1$, so that $\ord\updelta x_1=e$. A simple recurrence shows that $\ord \updelta^k x_1=ke$, a strictly increasing sequence, so that $\updelta$ is not integrable: a contradiction.
\end{proof}

\subsection{Tame symmetries}\label{subsec::tame}

The following proposition shows how to build integrable infinitesimal symmetries of trivial diffieties.

\begin{proposition}\label{prop::structure}
We consider the trivial diffiety $\mathbb{T}^n$, as defined in ex.~\ref{ex::trivial}. For any partition $\{x_1, \ldots, x_n\}=\bigcup_{\ell=1}^s\mathcal{X}_{\ell}$ and any functions $G_i$, $1\le i\le n$ such that, if $x_i\in\mathcal{X}_j$, $G_i$ only depends on variables in $\left(\bigcup_{k=1}^j\mathcal{X}_k\right)$ and $G_i$ of order $0$ in all variables in $\mathcal{X}_j$  the infinitesimal symmetry of $\mathbb{T}^n$ defined by
$$
\updelta= \sum_{i=1}^{n} \sum_{k\in\mathbb{N}}\uptau^{k}G_i\partial_{x_i^{(k)}}
$$
is integrable.
\end{proposition}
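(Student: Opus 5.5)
We first check that $\updelta$ is an infinitesimal symmetry, and then bound the $\uptau$-orders of all iterates $\updelta^k x_i$ by induction on the block index $j$, so that definition~\ref{def::integrability} applies.

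\emph{Symmetry.} On $\mathbb{T}^n$ there are no state equations, so all $x_i$ play the role of controls. By lem.~\ref{lem::commutativity} (with $b_{i,k}=\uptau^k G_i$), the only condition is $b_{i,k+1}=\uptau b_{i,k}$, which holds by construction. So $\updelta$ commutes with $\uptau$, and $\updelta\uptau^k x_i=\uptau^k\updelta x_i=\uptau^k G_i$ for all $k$.

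\emph{Finite-dimensional reduction for each block.} Let $e_j$ be the maximal $\uptau$-order of the $G_i$ with $x_i\in\mathcal{X}_j$; this order is taken with respect to variables in earlier blocks only, since each $G_i$ is of order $0$ in the variables of its own block. Consider the finite set of coordinates
$$
Z_j:=\mathcal{X}_j\cup\{x_i^{(k)}\,|\,x_i\in\mathcal{X}_\ell,\ \ell<j,\ 0\le k\le N_\ell\},
$$
where the $N_\ell$ are chosen recursively: $N_\ell$ is at least the maximum of $e_p$ over the later blocks $p>\ell$, plus whatever the earlier steps of the recursion require. We claim that $\updelta$ maps functions of $\bigcup_{\ell\le j}Z_\ell$ into functions of the same finite set. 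For $x_i\in\mathcal{X}_j$, $\updelta x_i=G_i$ depends only on $\mathcal{X}_j$ at order $0$ and on earlier blocks up to order $e_j$. For an earlier-block derivative $x_i^{(k)}$ with $x_i\in\mathcal{X}_\ell$, $\updelta x_i^{(k)}=\uptau^k G_i$ has order at most $k+e_\ell$ in blocks before $\ell$, and order at most $k$ in block $\ell$ itself, because $G_i$ has order $0$ there. This shows that the truncation is not closed at a fixed order for a naive choice of $N_\ell$, so the $N_\ell$ must be chosen to absorb the growth.

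\emph{Main obstacle: closing the recursion on $N_\ell$.} The recursion has to go from the first block upward. For block $1$, $G_i$ depends only on $\mathcal{X}_1$ at order $0$, so $\updelta\uptau^k x_i=\uptau^k G_i$ has order exactly $k$ in $\mathcal{X}_1$. Hence the jets of order at most $N$ in $\mathcal{X}_1$ are stable under $\updelta$ for every $N$: this is the prolongation of the finite-dimensional vector field $\sum G_i\partial_{x_i}$ on $\mathbb{R}^{\sharp\mathcal{X}_1}$. Inductively, assume that for each order $N$, the jets of order $\le N$ of blocks $<j$ are $\updelta$-stable. For $x_i\in\mathcal{X}_j$, the iterates $\updelta^k x_i$ are then functions of $\mathcal{X}_j$ (order $0$) and of jets of order $\le e_j$ of earlier blocks, since that set is stable. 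Thus $\ord_\uptau\updelta^k x_i\le e_j$ for all $k$. The same argument gives, for each order $N$, stability of the jets of order $\le N$ of blocks $\le j$: the order-$N$ jets of block $j$ involve earlier jets of order $\le N+e_j$, which form a stable set. So the correct statement to prove is "for each $N$, the jets of order $\le N$ in blocks $\le j$ lie in a finite $\updelta$-stable set of jets of order $\le N+\sum_{\ell\le j}e_\ell$", proved by induction on $j$.

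\emph{Conclusion.} Applying this with $N=0$ gives $\ord_\uptau\updelta^k x_i\le\sum_\ell e_\ell$ for all $i$ and $k$. The iterated order $e$ is therefore finite, and $\updelta$ is integrable.
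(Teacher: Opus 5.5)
Your argument is correct, but it takes a different route from the paper. The paper settles the statement in one line via Theorem~\ref{th::tunnel}. For a $\updelta$-ordering in which all derivatives of $\mathcal{X}_{j_1}$ lie below those of $\mathcal{X}_{j_2}$ when $j_1<j_2$, the set $\{\updelta x_i-G_i\}$ is already a characteristic set with leaders $\updelta x_i$. Hence all $\alpha_{i,1}=0$, the tunnel number is $0$, and the implication b)$\Rightarrow$a) of that theorem gives integrability.

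You instead bound the iterated order by hand. You use finite $\updelta$-stable sets of jets built block by block, $S_1(N)=\mathcal{X}_1^{[\le N]}$ and $S_j(N)=\mathcal{X}_j^{[\le N]}\cup S_{j-1}(N+e_j)$, which yields the explicit bound $e\le\sum_\ell e_\ell$.

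The paper's route is shorter and places tame symmetries in the chimney/tunnel picture. Yours is elementary, self-contained and quantitative. For this triangular situation it also supplies the argument that the implication b)$\Rightarrow$a) needs but that the paper only sketches. The paper's proof of Theorem~\ref{th::tunnel} assumes a relation expressing some $\updelta^{k_0}x_i$ through the $\updelta^k x_i$, $k<k_0$, alone, whereas here $G_i$ involves $\uptau$-jets of earlier blocks. With Definition~\ref{def::order_delta} read literally, tunnel number $0$ is not even sufficient: for $\updelta=\uptau$ on $\mathbb{T}^1$ the characteristic set is $\{\updelta x-\uptau x\}$ with leader $\updelta x$, yet $\updelta^k x=x^{(k)}$.

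Do clean up the write-up, though. Your provisional inductive hypothesis is that the jets of order $\le N$ of blocks $<j$ are themselves $\updelta$-stable, and from it you derive the bound $\ord_\uptau\updelta^k x_i\le e_j$. Both are false once two earlier blocks are chained. Take singleton blocks with $G_1=0$, $G_2=\dot x_1$, $G_3=\dot x_2$: then $\updelta\dot x_2=\ddot x_1$ and $\updelta^2 x_3=\ddot x_1$, of order $2>e_3=1$.

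Only the corrected statement you formulate afterwards is right: jets of order $\le N$ of blocks $\le j$ lie in the finite stable set $S_j(N)$, of order at most $N+\sum_{\ell\le j}e_\ell$. State and prove that directly, and drop the $Z_j$/$N_\ell$ paragraph.
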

\begin{proof}
   This is a simple consequence of th.~\ref{th::tunnel}, using a $\updelta$-ordering such that all derivatives of $\mathcal{X}_{j_1}$ are smaller than all derivatives $\mathcal{X}_{j_2}$ if $j_1<j_2$. With such an ordering, $\updelta x_i=G_i$ is a characteristic set, so that the tunnel number is $0$ and $\updelta$ is integrable. 
\end{proof}

\begin{definition}
\label{def::tame}
We call \emph{tame} all integrable symmetries of this kind in a given system of flat coordinates.
\end{definition}

One may notice that the definition of tame integrable symmetries is very close to the definition of differential Jonquières automorphisms of $\mathbb{T}^n$, that maps $(x_1, \ldots, x_n)$ to
\begin{equation}\label{eq::Jonquieres}
    (F_1(x_1), F_2(x_1,x_2), \ldots, F_n(x_1, \ldots, x_{n-1}, x_n)),
\end{equation}
the $F_i$ being of order $0$ in $x_i$, for all $1\le i\le n$. It has been proved that not all local automorphisms can be decomposed using Jonquières morphisms and permutations~\cite{Ollivier1999}. Not all integrable fields are tame in the usual coordinates, nor any order $0$ change of coordinates, as shown by the following examples~\ref{ex::wild1} and \ref{ex::wild2}.

To test if a given integrable symmetry is tame, we may use the following easy criterion.
\begin{proposition}\label{prop::tame}
If there exists coordinates $y_i=F_i(x)$, with $\mathrm{ord}F_i=0$, such that $\updelta$ is tame in the coordinates $y$, then, the Lie algebra $L:=\mathrm{Lie}\langle[\updelta,\partial_{x_i^{(k)}}]|1\le i\le n;\> k>0\rangle<n$ is such that $\mathrm{dim} L<n$.
\end{proposition}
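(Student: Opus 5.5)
The plan is to work entirely in the coordinates $y$ in which $\updelta$ is tame, and to show that the brackets $[\updelta,\partial_{x_i^{(k)}}]$, $k>0$, and hence the whole Lie algebra $L$ they generate, only involve fewer than $n$ of the dependent variables $y_1,\dots,y_n$. That is the sense of $\dim L<n$ I will aim for. Since the statement does not fix the notion of dimension, I will first count, for each vector field of $L$, the indices $i$ such that some $\partial_{y_i^{(k)}}$ carries a nonzero coefficient. The plan then has three steps.

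\emph{Step 1: change of coordinates.} Because $y_j=F_j(x)$ is of order $0$, we have $y_j^{(\ell)}=\sum_i \partial_{x_i}F_j\,x_i^{(\ell)}+(\hbox{terms of order }<\ell)$, with an invertible Jacobian $(\partial_{x_i}F_j)$. The chain rule gives $\partial_{x_i^{(k)}}=\sum_j\sum_{\ell\ge k}\partial_{x_i^{(k)}}(y_j^{(\ell)})\,\partial_{y_j^{(\ell)}}$, and this expression is triangular in the order, with invertible diagonal blocks. Hence the $\mathcal{O}$-module spanned by $\{\partial_{x_i^{(k)}}\mid k\ge1\}$ equals the one spanned by $\{\partial_{y_j^{(k)}}\mid k\ge1\}$. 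By bilinearity of the bracket and the Leibniz rule, it is enough to control $[\updelta,\partial_{y_j^{(\ell)}}]$ for $\ell\ge1$, together with the terms $\partial_{y_j^{(\ell)}}(\hbox{coefficient})\cdot\updelta$ that the Leibniz rule produces.

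\emph{Step 2: compute the brackets.} Write $\updelta=\sum_{i,k}\uptau^kG_i\,\partial_{y_i^{(k)}}$. Using the identity $\partial_{y_j^{(\ell)}}\uptau^k=\sum_{p}\binom{k}{p}\uptau^{k-p}\partial_{y_j^{(\ell-p)}}$ (obtained from $[\partial_{y_j^{(\ell)}},\uptau]=\partial_{y_j^{(\ell-1)}}$), I will get an explicit formula for $[\updelta,\partial_{y_j^{(\ell)}}]=-\sum_{i,k}\partial_{y_j^{(\ell)}}(\uptau^kG_i)\,\partial_{y_i^{(k)}}$. Tameness says that if $y_i\in\mathcal{X}_s$ (the last block), then $G_i$ depends on the variables of $\mathcal{X}_s$ only at order $0$. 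Moreover, no $G$ depends on any derivative of a variable of $\mathcal{X}_s$ except $G_i$ for $i\in\mathcal{X}_s$ at order $0$.

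\emph{Step 3: conclude.} Consequently, for $y_j\in\mathcal{X}_s$ and $\ell\ge1$, the bracket $[\updelta,\partial_{y_j^{(\ell)}}]$ reduces to $-\partial_{y_j^{(\ell-1)}}$-type terms. These terms already lie in the span of $\partial_{y_j^{(k)}}$ with $y_j\in\mathcal{X}_s$, so they add no new direction. For $y_j$ in earlier blocks, only the coefficients in front of $\partial_{y_i^{(k)}}$ with $i$ in later blocks can be nonzero. The structure is therefore block-triangular: the generators, and by an induction on bracket length all elements of $L$, lie in a subalgebra in which at least one block contributes no independent direction at the level of the order-$0$ coordinates. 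For the induction I will use that the brackets of $\partial_{y^{(k)}}$-type fields with triangular coefficients stay triangular. This yields $\dim L<n$.

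The main obstacle is Step 3, specifically pinning down the precise notion of $\dim L$ so that the triangularity genuinely produces a strict inequality. The set $\{\partial_{y^{(k)}}\}_{k\ge1}$ itself is infinite dimensional, so the dimension must be measured modulo the trivial $\partial_{y_j^{(k)}}$ directions, or on the span projected to the order-$0$ coordinates. I will first try the projection onto the span of the $\partial_{y_i}$ and check that the block $\mathcal{X}_1$ (on which the $G_i$ depend only at order $0$ and only on $\mathcal{X}_1$) never contributes. If that fails, I will try the last block $\mathcal{X}_s$ instead.
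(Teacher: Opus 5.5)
Your final plan (measure $\dim L$ as the rank of $L$ on the order-$0$ coordinates, and show that the first block $\mathcal{X}_1$ never contributes) is correct. It is exactly the paper's argument: every element of $L$ kills the first-block coordinates, so $L$ ``cannot be of full rank''.

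You can reach it without Steps 1--3. For $y_i\in\mathcal{X}_1$ and $k>0$,
\[ [\updelta,\partial_{x_j^{(k)}}]\,y_i=\updelta\bigl(\partial_{x_j^{(k)}}y_i\bigr)-\partial_{x_j^{(k)}}G_i=0, \]
because $y_i$ is of order $0$ in $x$, and $G_i$ is an order-$0$ function of first-block coordinates only. Since $[X,Y]y=X(Yy)-Y(Xy)$, the fields annihilating a given function form a Lie subalgebra. Hence all of $L$ kills the $\sharp\mathcal{X}_1\ge1$ independent functions $y_i$, $y_i\in\mathcal{X}_1$. This replaces your loosely formulated ``triangularity is preserved'' induction. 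Also, $(\hat\delta y_i)_i$ is obtained from $(\hat\delta x_i)_i$ by the invertible Jacobian of $F$, so the rank is the same whether you measure it in $x$ or in $y$.

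Several intermediate claims are wrong and should be dropped:
\begin{itemize}
\item In Step 1 the Leibniz term is $\updelta(c)\,\partial_{y_j^{(\ell)}}$ with $\ell\ge1$, which kills every order-$0$ function. It is not $\partial_{y_j^{(\ell)}}(c)\,\updelta$. A term proportional to $\updelta$ would ruin the argument, since $\updelta y_i=G_i\neq0$ in general.
\item In Step 3, for $y_j\in\mathcal{X}_s$ one gets $[\updelta,\partial_{y_j^{(\ell)}}]=-\sum_{y_i\in\mathcal{X}_s}\sum_{k\ge\ell}\binom{k}{\ell}\uptau^{k-\ell}(\partial_{y_j}G_i)\,\partial_{y_i^{(k)}}$. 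These are not ``$\partial_{y_j^{(\ell-1)}}$-type'' terms.
\item For earlier blocks, same-block components at positive order do appear. For instance, $G_1=y_1^2$ gives $[\updelta,\partial_{\dot y_1}]$ a component $-2y_1\partial_{\dot y_1}$. For this reason your first notion of dimension (indices carrying a nonzero coefficient at some order) fails.
\item The fallback to $\mathcal{X}_s$ would fail too. $G_i$ for $y_i\in\mathcal{X}_s$ may depend on derivatives of earlier blocks: $G_2=\dot y_1$ gives $[\updelta,\partial_{\dot y_1}]$ the order-$0$ component $-\partial_{y_2}$.
\end{itemize}
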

\begin{proof}
    All coordinates functions $y$ in $\mathcal{Y}$ are such that $\hat\delta y=0$ for all $\hat\delta\in L$, so that $L$ cannot be of full rank.
\end{proof}
Using this criterion, simple computations allow to test that some symmetries in the following examples are not tame in the chosen system of coordinates.

\begin{example}\label{ex::wild1}
    We will use the Rouchon involution~\cite{Ollivier1998}
\begin{equation}\label{eq::Rouchon}
    (x_1,x_2)\mapsto \left(y_1=\frac{\dot x_2}{\ddot x_1},y_2=\left(\frac{\dot x_2}{\ddot x_1}\right)'-\frac{\dot x_1\dot x_2}{\ddot x_1}+x_2\right),
\end{equation}
which is known not to admit any decomposition in a sequence of permutations and Jonquières automorphisms.

The integrable symmetry $\partial_{x_1}$ (resp.~$\partial_{x_2}$) is expressed by $y_1\partial_{y_2}$ (resp.~$\partial_{y_2}$) in the $y$ coordinates. So both are tame in both coordinate systems. However, the integrable symmetry $\sum_{k\in\mathbb{N}}x_1^{(k+1)}\partial_{x_2^{(k)}}$, which is tame, in the $x$ coordinates is expressed by 
$$
\sum_{k\in\mathbb{N}} \left(\frac{1}{\left(\frac{1}{\ddot y_1}\right)''}\right)^{(k)}\partial_{y_1^{(k)}}+
\sum_{k\in\mathbb{N}} \left(
\frac{y_2'}{\ddot y_1}-\frac{\left(\left(\frac{\dot y_2}{\ddot y_1}\right)'''\right)^{2}\frac{\dot y_2}{\ddot y_1}}
{\left(\left(\frac{\dot y_2}{\ddot y_1}\right)''\right)^{2}}
\right)^{(k)}\partial_{y_2^{(k)}},
$$
which is not, in the $y$ coordinates.
\end{example}

\begin{example}\label{ex::wild2}
Consider the infinitesimal symmetry $\updelta$ of $\mathbb{T}^2$ defined by $\updelta x_1=x_1-x_1^{(4)}-x_{2}^{(6)}$ and $\updelta x_2= x_2+\ddot x_1+x_2^{(4)}$. In the coordinates $y_1=x_1+\ddot x_2$, $y_2=x_2$, $\updelta y_1=y_1$ and $\updelta y_2=y_2+\ddot y_1$, so that $\updelta$ is tame and integrable. Moreover, this change of variable is Jonquières. But there is no order $0$ change of coordinates in which it is tame as there is no non constant function $F(x)$  such that $\mathrm{ord}\updelta F=0$. \end{example}

One may wonder if all integrable infinitesimal symmetries become tame using a change of variables of order great enough.
\bigskip

It is easily seen that the set of infinitesimal symmetries is stable by addition and Lie bracket. 
But, the set of \emph{integrable} symmetries has no obvious algebraic structure. As the following example shows, it is not stable by addition or Lie bracket.

\begin{example}
\label{ex::not_stable_by_addition}
    Consider the following fields $\updelta_1:=\sum_{k}x_2^{(k+1)}\partial_{x_1^{(k)}}$ and
    $\updelta_2:=\sum_{k}x_1^{(k+1)}\partial_{x_2^{(k)}}$ on the diffiety $\mathbb{T}^2$. They are both integrable infinitesimal symmetries, but $\updelta_1+\updelta_2$ is not as $(\updelta_1+\updelta_2)x_1=x_2'$, $(\updelta_1+\updelta_2)^2x_1=x_1''$, $(\updelta_1+\updelta_2)^3x_1=x_2^{(3)}$, $(\updelta_1+\updelta_2)^4x_1=x_1^{(4)}$, \dots

    Their Lie bracket is $\updelta:=\sum_{k}x_1^{(k+2)}\partial_{x_1^{(k)}}-\sum_{k}x_2^{(k+2)}\partial_{x_2^{(k)}}$, so that $\updelta^i x_1=x_1^{(2i)}$ and $\updelta$ is not integrable.
\end{example}

\subsection{Computing Lie--B\"acklund Compatible Integrable Vector Fields}
\label{sec::computations}

In this subsection, we review the basic computation tools at our disposal to look for integrable symmetries. Two distinct problems must be solved by translating them into systems of partial differential equations: we give two different ways of looking for infinitesimal symmetries: a direct approach or using a parametrization, and then investigate how to express integrability.

\subsubsection{Solving Directly The System}
\label{subsubsec::GB}

The direct approach consists only in solving directly the system~\eqref{eq::commutativity1} in which we incorporate the constraints that functions depend on the derivative of the commands up to a finite order $N$. In order to apply a computational method, one has to know $N$ or at least an upper bound of it, if such a bound is available. We refer to~\ref{susubsec::int_cond} bellow for this topic. Considering flat systems, integrable symmetries of any order can exist when $m>1$.

In order to handle systems of linear differential
equations, one may rely on Gröbner bases computations. The basic idea
is to choose an ordering $\prec$ on derivatives, compatible with the
derivation, that is such that $\upsilon_{1}\prec\upsilon_{2}$ is
equivalent to $\upsilon_{1}'\prec\upsilon_{2}'$. A Gröbner basis is a set
of generators of a module, the leading derivatives of which generates
the monoideal of leading derivatives. It is computed by reducing
equations, using Euclidean division and using the following
``completion process''.

For any couple of equations
$\upsilon_{1}-R_{1}$ and $\upsilon_{2}-R_{2}$, where $\upsilon_{1}$
and $\upsilon_{2}$ are leading derivatives, if they are derivatives of
the same variables, let $\theta_{1}$ and $\theta_{2}$ be products of
derivatives of minimal orders such that
$\theta_{1}\upsilon_{1}=\theta_{2}\upsilon_{2}$, we complete the system
with the reduction of $\theta_{1}R_{1}-\theta_{2}R_{2}$ if it is non
zero. The completion process stops when all such ``S-polynomials'' are
reduced to $0$~\cite[th.~2.7]{Moeller1992}. An implementation is available in the Gröbner package of Maple.

In the nonlinear case, one may use characteristic sets
computation~\cite{Boulier2009}. An implementation is available in the
Diffalg package of Maple.

In the sequel, we illustrate through several examples that this kind of calculation can be made with a step by step heuristics, which consists in explicit differential constraints that reduce the dependency of $\updelta$ on the dieffiety variables. 

\subsubsection{Parametrization of The Vector Fields}
\label{sec::parametrization}

In this section, we will show that the vector fields we are looking for can be parametrized by functions that are first integrals of a certain distribution. 

We define an extended system with the dynamics defined by $\uptau_2 a_i = \updelta f_i$ for $1 \leq i \leq n$, with $\updelta = \sum_{i=1}^n a_i \partial_{x_i} + \sum_{j=1}^m \sum_{k=0}^\infty b_{j,k} \partial_{u_j^{(k)}}$ is the vector field we are looking for. 

This system is associated to the following Cartan field:
\begin{equation}\label{eq::tau2bis}
\hat{\uptau} = \uptau + \uptau_2:=\sum_{i=1}^n \left (  \left ( \sum_{h=1}^n \frac{\partial f_i}{\partial x_h} a_h \right )   + \sum_{\ell=1}^m \frac{\partial f_i}{\partial u_{\ell}} b_{\ell,0}  \right )\partial_{a_{i}} + \sum_{j=1}^m \sum_{k=0}^\infty b_{j,k+1} \partial_{b_{j,k}}.
\end{equation}

Note that this system is equivalent to~\eqref{eq::linearized} and defines an isomorphic diffiety extension. 
 
We gave an abstract proof of th.~\ref{th::lin_access}, showing that it defines a flat extension. In \cite[th.~5]{KLO-20}, inspired by Jakubczyk and Respondek~\cite{Jakubczyk1980}, we have shown that the flat outputs of a similar system may be chosen to be solution of linear PDEs. As the linearized system is precisely linear, we are able here to give a complete algorithm for computing flat outputs, relying on the following proposition. 

\begin{proposition}\label{prop::algo}
Using the notation of prop.~\ref{prop::accessibility} and $\uptau_2$ as in \eqref{eq::tau2bis}, we define 
$$
\updelta_{k}=\langle\hat\uptau_2^\ell\partial_{b_{j,0}}|1\le i\le m, 0\le\ell\le k\rangle.
$$

  i) The $\updelta_k$ are involutive.

  ii) If $\updelta_{k+1}=\updelta_k$, then $\updelta_{k+\ell}=\updelta_k$ for all $\ell\in\mathbb{N}$.

  iii) We have $\updelta_{n+1}=\updelta_n$.
  
  iv) Assuming that the system~\eqref{eq::system} is strongly accessible, we further have: $\dim\updelta_n=n+m$. 

  We define $r_k=\dim\updelta_k-\dim\updelta_{k+1}$ and $s_k=s_k-s_{k+1}$. Let $q_{\ell}$, $1\le\ell\le h$, be the increasing indices for which $s_q\neq0$. We can then define\footnote{For the readers familiar to Young diagrams, the $p_j$ form a partition of $n$ conjugate to the $s_i$.} $p_{r_{q_{\ell}-1}+1}$, \dots, $p_{r_{q_{\ell}}}$ to be equal to $q_{\ell}$, for $1\le\ell\le h$.

  We define $k_i$ to be such that $r_{k_i-1}\le i\le r_{k_i}$.
  For $\ell$ from $1$ to $h$, let $\zeta_{r_{q_{\ell}-1}+1}$, $\zeta_{r_{q_{\ell}}}$ be $s_{q_{\ell}}$ independent solutions of $\updelta_{p-1}$, chosen to also be independent of the vector space $\langle\zeta_{i}^{(k)}$, for $\lambda<\ell$, $r_{q_{\ell}-1}<i\le r_{q_{\ell}}$ and $0\le k\le q_{\ell}-k_i\rangle$. 

  v) One may choose the $\zeta_i$ to be linear combinations of the $a_i$ with coefficient analytic functions in the $x_i$ and $u_{j}^{(k)}$ defined in a dense open subset of $V$.

  vi) For any $1\le i\le n$, let $k_i$ be the integer such that $r_{k_i-1}<i\le r_{k_i}$. The vector space $\langle\zeta_{i}^{(k)}|1\le\lambda\le r_1, 0\le k\le k_i\rangle$  has dimension $n+m$.

  vii) The $z_i$ form a basis of the module defined by $\uptau_2$ and also a flat output of the diffiety it defines.
\end{proposition}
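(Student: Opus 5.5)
The plan is to treat the linearized system $\uptau_2 a_i=\updelta f_i$ as a linear control system with states $a_i$ and controls $b_{j,0}$, and to run the classical Brunovsky / Jakubczyk--Respondek construction on the flag of distributions generated by $\hat\uptau_2^\ell\partial_{b_{j,0}}$. All coefficients are functions of $(x,u^{(k)})$, and the vector fields are linear in the fibre variables $a,b$.

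For i), note that $\hat\uptau_2^\ell\partial_{b_{j,0}}$ has coefficients depending only on the base variables and not on the $a_i,b_{j,k}$. So these fields are ``constant'' along the fibres of the linear extension. Brackets of two such fields, both of the form $\sum c(x,u)\partial_{a_i}+\sum d(x,u)\partial_{b_{j,k}}$, vanish identically, because these fields do not differentiate base variables. Hence each $\updelta_k$ is spanned by commuting fields and is trivially involutive. This is simpler than Lemma~\ref{lem::involutive}, which I will still invoke as the general backup.

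For ii), I use the Jacobi-like recurrence: $\hat\uptau_2^{k+2}\partial_{b_{j,0}}=[\hat\uptau_2^{k+1}\partial_{b_{j,0}},\uptau_2]$. If $\hat\uptau_2^{k+1}\partial_{b_{j,0}}=\sum_{\ell\le k} c_{\ell}\hat\uptau_2^\ell\partial_{b_{j',0}}$ with coefficients $c_\ell$ in the base, then bracketing with $\uptau_2$ gives $\sum (c_\ell\hat\uptau_2^{\ell+1}\partial_{b_{j',0}} - (\uptau c_\ell)\hat\uptau_2^{\ell}\partial_{b_{j',0}})$, which lies in $\updelta_{k+1}=\updelta_k$. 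Induction then concludes. For iii), the dimensions of the $\updelta_k$ are nondecreasing and bounded by $n+m$ on the relevant fibre coordinates $\partial_{a_i},\partial_{b_{j,0}}$ modulo higher $b_{j,k}$. Here I must check that projecting away $\partial_{b_{j,k}}$ for $k\ge1$ is harmless, as in \eqref{eq::Gamma}. A strictly increasing chain stabilizes within $n+1$ steps, and ii) then gives $\updelta_{n+1}=\updelta_n$. For iv), I combine Lemma~\ref{lem::transfer} with Proposition~\ref{prop::accessibility}: strong accessibility means that the Lie algebra generated by the $\hat\uptau^k\partial_{u_j}$ has dimension $n+m$. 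Lemma~\ref{lem::transfer} transports this to the $\hat\uptau_2^k\partial_{b_{j,0}}$, and since these already commute, the Lie algebra equals the span, which is $\updelta_n$.

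For v)--vii), I pick the $\zeta_i$ as first integrals of $\updelta_{p-1}$ that are linear in the $a_i$. Since the annihilator of a distribution spanned by base-coefficient fields in fibre directions is itself described by linear forms with base-dependent coefficients, rank considerations on a dense open set give the $s_{q_\ell}$ independent linear combinations, which is v). For vi), I run the usual Brunovsky counting argument. A first integral of $\updelta_{q-1}$ has its $\uptau_2$-derivatives up to order $q-1$ annihilated successively by the smaller distributions, while the $q$-th derivative pairs nondegenerately with the new directions. The dimensions $s_k$ then produce exactly $\sum_i (k_i+1)$ independent functions, and the Young-diagram conjugacy gives the total $n+m$. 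Finally, vii) follows because the $\zeta_i^{(k)}$ span all $a_i$ and the $b_{j,0}$ linearly, and hence all $b_{j,k}$ after differentiation. The $\zeta_i$ therefore generate the module freely, and by the definition of flat extension (Theorem~\ref{th::lin_access}) they form a flat output.

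The main obstacle I expect is vi): getting the index bookkeeping with $r_k$, $s_k$, $p_j$, $k_i$ right, and proving that the independence chosen level by level survives differentiation. I would first try to prove it by pairing $\langle \mathrm{d}\zeta_i^{(k)},\hat\uptau_2^{\ell}\partial_{b_{j,0}}\rangle$ into a block triangular matrix with invertible antidiagonal blocks.
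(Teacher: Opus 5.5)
Your arguments for i)--iii) and v) are sound and essentially the paper's. For i), the point is exactly that the $\hat\uptau_2^\ell\partial_{b_{j,0}}$ are fibre fields with coefficients in $\mathcal{O}(V)$, hence pairwise commuting.

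The genuine gap is in iv). Lemma~\ref{lem::transfer} is an $\mathcal{O}(V)$-linear correspondence $\updelta\mapsto\updelta_2$ that intertwines the iterated-bracket operators. It therefore identifies the span $\langle\hat\uptau^k\partial_{u_j}\mid k\ge0\rangle$ with $\updelta_n$. It does \emph{not} transport brackets among these fields. The original fields differentiate $x$ and $u$ and in general do not commute. For instance, for $\dot x_1=u_1$, $\dot x_2=u_2$, $\dot x_3=x_1u_2-x_2u_1$ one has $[\hat\uptau\partial_{u_1},\hat\uptau\partial_{u_2}]=2\partial_{x_3}$, whereas their images $\partial_{a_1}-x_2\partial_{a_3}$ and $\partial_{a_2}+x_1\partial_{a_3}$ commute.

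So your chain only yields $\dim\updelta_n=\dim\langle\hat\uptau^k\partial_{u_j}\mid k\ge0\rangle\le\dim\mathrm{Lie}\langle\hat\uptau^k\partial_{u_j}\rangle=n+m$. The commutativity of the images says nothing about the reverse inequality. What is missing is that, on the original diffiety, iterated brackets of the $\hat\uptau^k\partial_{u_j}$ add no new directions at generic points. This is exactly Lemma~\ref{lem::involutive}: $\Gamma$ is involutive. Since brackets of fields in $\langle\partial_{x_i},\partial_{u_j}\rangle$ stay in that module, the Lie algebra lies in $\Gamma\cap\langle\partial_{x_i},\partial_{u_j}\rangle=\langle\hat\uptau^k\partial_{u_j}\mid k\ge0\rangle$. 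Only then does Lemma~\ref{lem::transfer} give $\dim\updelta_n=n+m$. This is the argument in the proof of Theorem~\ref{th::lin_access}, on which the paper's one-line proof of iv) rests. You invoked Lemma~\ref{lem::involutive} for i), where it plays no role, instead of for iv), where it is indispensable.

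For vi) you take a different route from the paper. The paper argues by contradiction: it takes the largest level $\lambda_0$ at which a linear relation among the $\zeta_i^{(k)}$ occurs. From that relation it extracts a combination $\xi$ that would be an extra independent solution of $\updelta_{\lambda_0+1}\zeta=0$, contradicting the count of such solutions. Your block-triangular pairing is the classical Brunovsk\'y argument and is viable, but as written it is only announced.

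What makes it work is the identity $X(\hat\uptau\zeta)=\hat\uptau(X\zeta)+[X,\hat\uptau]\,\zeta$, with $\hat\uptau=\uptau+\uptau_2$ as in \eqref{eq::tau2bis}. By induction it shows that if $\zeta$ annihilates $\updelta_{q-1}$, then $\zeta^{(k)}$ annihilates $\updelta_{q-1-k}$ for $k<q$. It also shows that $(\hat\uptau_2^{q-k}\partial_{b_{j,0}})\zeta^{(k)}=(\hat\uptau_2^{q}\partial_{b_{j,0}})\zeta$ for $k\le q$. You would still have to deduce the invertibility of the antidiagonal blocks from how the $\zeta_i$ are selected at each level, namely their independence modulo derivatives of the earlier ones.
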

\begin{proof} We may first notice that the full control condition\footnote{The Jacobian matrix $\left ( \frac{\partial f_i}{\partial u_k} \right)_{ik}$ has full rank $m$.}, we have $r_1=m$.

i) As  \eqref{eq::linearized} is a linear system, the coefficients of the
    derivations appearing in the $\updelta_k$ only depend on analytic 
    functions of the $x_i$ and $u_j^{(k)}$ and so are constant for the derivations $\partial_{a_i}$.

    ii) If $\updelta_{k+1}=\updelta_k$, then $\hat\uptau_2^{k+1}\partial_{b_{j,0}}\in\updelta_k$, for $1\le i\le m, 0\le\ell\le k$, so that $\hat\uptau_2^{k+2}\partial_{b_{j,0}}\in\updelta_{k+1}$ and $\updelta_{k+2}\subset\updelta_k$. An easy recurrence shows then that $\updelta_{k+\ell}=\updelta_k$ for all $\ell\in\mathbb{N}$.

iii) As $\langle\partial_{b_{j,0}}|1\le j\le m\rangle\subset\updelta_k\subset\langle\partial_{b_{j,0}}|1\le j\le m\rangle+\langle\partial_{a_i}|1\le i\le n\rangle$, we need have $\updelta_{k+1}=\updelta_k$ for $k\le n$. The result is then a consequence of ii).

iv) This is a consequence of prop.~\ref{prop::accessibility}.

    v) As \eqref{eq::linearized} is a linear system, the coefficients of the
    derivations appearing in the $\updelta_k$ only depend on analytic 
    functions of the $x_i$ and $u_j^{(k)}$, \textit{i.e.} functions of the ring $
    \mathcal{O}(V)$. So finding those solutions amounts to solving a 
    linear system in the $a_i$ over the field of fractions $\mathcal{F}$ of $\mathcal{O}(V)$. So, the $\zeta_i$ are defined in the dense open set where their coefficients are analytic.

    vi) We prove that, for all $1\le\lambda\le h$, the family $\langle\zeta_{i}^{(k_i)}$, for $1\le i\le n$ and $1\le\lambda\le k\le h$, is independent. This is true by hypothesis for $\lambda=h$. Assume it is false for some smaller value and let $\lambda_0$ be the greatest value for which it is dependent. They form $\sum_{k=\lambda}^h r_k=n - \dim\updelta_{\lambda-1}$  solutions of $\updelta_{\lambda-1}\zeta=0$, so a maximal family $Z_\lambda$ of solutions when independent.
    
    Assuming that a non trivial relation exists for $\lambda_0$
    $$
    \sum_{i=1}{r_{\lambda_0}}\sum_{k=0}^{k_i-\lambda_0}c_{i,k}\zeta_i^{(k)}=0
    $$ 
    as the $\zeta_i$, for $r_{\lambda_0}<i\le r_{\lambda_0}$ are assumed to be independent of the vector space $\langle\zeta_{i}^{(k)}$, for $\lambda<\ell$, $r_{q_{\ell}-1}<i\le r_{q_{\ell}}$ and $0\le k\le q_{\ell}-k_i\rangle$, this implies the existence of a relation
    $$
    \sum_{i=1}{r_{\lambda_0-1}}\sum_{k=0}^{k_i-\lambda_0}c_{i,k}\zeta_i^{(k)}=0
    $$
    Let
    $$
    \xi:=\sum_{i=1}^{r_{\lambda_{0}-1}}c_{i,k_i-\lambda_0}\zeta_{i}^{(\lambda_0-k_i-1)},
    $$
    it is non zero, as $\lambda_0$ is maximal. This implies that
    $$\dot{\xi} \in\langle\zeta_{i}^{(k)}| 1\le i\le r_{\lambda_{0}-1},0\le k< k_i-\lambda_0-\rangle,
    $$
    so that $\xi$, which is independent of $Z_{\lambda_0+2}$ would be an extra solution of $\updelta_{\lambda_0+1}\zeta=0$, a contradiction.

    vii) By vi), the $b_{i,0}$ can be expressed as linear combinations of the $\zeta_i$ and their derivatives, with coefficient in the field generated by $\mathcal{O}(V)$. So all points in the dense open sets where these coefficient are analytic are flat.
\end{proof}

\begin{example}
    To better understand the notations, one may place the $\hat\uptau_2^{k}\partial_{b_{j,0}}$ and $\zeta_i^{(k)}$ in a Young tableau as for the following system in Brunovský form:
    \begin{equation}
    \begin{aligned}
         \dot{z}_{i,k} &=z_{i,k-1}\> \hbox{for }\> 0<k\le k_i\\
         \ddot{z}_{i,1} &=u_{i}.
    \end{aligned}
    \end{equation}

    \ytableausetup{centertableaux, boxsize=3em}
\hbox to \hsize{\begin{ytableau}
  \partial_{b_{1,0}}  &\partial_{z_{1,3}}  &\partial_{z_{1,2}} &\partial_{z_{1,1}}&\partial_{z_{1,0}}\\
  \partial_{b_{2,0}}&\partial_{z_{2,2}}&\partial_{z_{2,1}}&\partial_{z_{2,0}}\\
  \partial_{b_{3,0}}&\partial_{z_{3,2}}&\partial_{z_{3,1}}&\partial_{z_{3,0}}\\
  \partial_{b_{4,0}}&\partial_{z_{4,0}}
\end{ytableau}
\hss
\begin{ytableau}
  \zeta_1^{(4)}&\zeta_1^{(3)}&\ddot\zeta_1&\dot\zeta_1&\zeta_1\\
  \zeta_2^{(3)}&\ddot\zeta_2&\dot\zeta_2&\zeta_2\\
  \zeta_3^{(3)}&\ddot\zeta_3&\dot\zeta_3&\zeta_2\\
  \dot\zeta_4&\zeta_4
\end{ytableau} \hss}

For this system, we have $r_1=4$, $r_2=r_3=3$ and $r_4=1$, $s_1=1$, $s_2=0$, $s_3=2$ and $s_4=1$. In the same way, $p_1=4$, $p_2=p_3=4$ and $p_4=1$, $k_1=4$, $k_2=k_3=3$ and $k_4=1$. 

The system being in Brunovský from, the $\zeta_i$ may be chosen to be the $z_{i,0}$.

\end{example}

We can now sketch an algorithm in the following way. We assume that the $f_i$ are rational functions.

\begin{algorithm}\label{algo::basis}
\textbf{Input} A system $\dot{x}_i=f_i$.

\textbf{Output} A basis of the module defined by the linearized system associated to the field $\uptau_2$~\ref{eq::tau2bis}.

\textbf{Step 1.} Construct $\uptau_2$;

\textbf{Step 2.} \textbf{for} $k$ \textbf{from} $0$ to $n$, construct $\updelta_k$;

\textbf{Step 3.} Compute the $r_i$, $s_i$, $p_j$ $h$, $q_{\ell}$ and $k_i$.

\textbf{Step 4.} Let $\zeta=\emptyset$. \textbf{for} $k$ \textbf{from} $h$ to $1$, compute a maximal set of independent solutions of $\updelta_k \zeta=0$ and extract a maximal subset $Z_2$ of solutions independent of the derivatives of $Z$; Let $Z:=Z\cup Z_2$.

\textbf{Step 5.} Compute the derivatives $\zeta_{i}^{(k)}(a,b)$, for $1\le i\le r_{\lambda_{0}-1}$ and $0\le k< k_i-\lambda_0$. Solve this system to get expressions $a_i=A_i(Z)$ and $b_{j,0}=B_j(Z)$.

\textbf{Return} $Z_2$ and the parametrization $a_i=A_i(Z)$ and $b_{j,0}=B_j(Z)$.
\end{algorithm}

The main contribution to the asymptotic complexity is step~4, with a sequence of at most $n$ resolution of linear systems of size $n$, that requires at most $n^4$ operations on the fraction field of $\mathcal{0}(V)$ using naive algorithm. But, in the general case, using dense representation we cannot avoid an exponential growth of the coefficients due to successive applications of $\uptau_2$. Anyway, hand computations remain manageable for small systems.

\subsubsection{Integrability conditions}\label{susubsec::int_cond}

We need to distinguish two main cases. When $m=1$, we know by th.~\ref{th::one_control}, that any integrable symmetry is of order $0$. This situation will be investigated in section~\ref{sec::single}. 

When $m>1$, two cases may arise. First, we could be able to bound the order of $\updelta$. The only tool at our disposal for this is th.~\ref{thm::rouchon_like}, when the ideal $\mathcal{J}_D$ admits no non trivial solution. This will be considered with the first examples of section.~\ref{sec::multi}. 

When no bound is available, we need to restrict our investigations to integrable symmetries of a given order. Then, in principle, th.~\ref{th::bound_ord} allows us to restrict the integrability test to a finite number of iterations. In practice, computations would be untractable and we prefer to solve examples in the last part of sec.~\ref{sec::multi} using tricks, or changes of variables defined by flat outputs. Indeed, examples at our disposal are all flat.

\section{Examples and Computations for Single-Input Systems}\label{sec::single}

\subsection{A first example. The simplest nonflat system}\label{subsec::ex_nonflat}
In this section, we consider the following example:
$
\dot{x} = \dot{y}^2, 
$
endowed with the following Cartan field: $\uptau = \dot{y}^2 \partial_{x} + \sum_{k=0}^\infty y^{(k+1)} \partial_{y^{(k)}}$. 

Notice that this system is Lie--B\"acklund equivalent to the system:
\begin{equation}
\label{eq::explicit_form_ex1}
\left \{ \begin{array}{rcl}
\dot{x} & = & u^2 \\
\dot{y} & = & u
\end{array} \right.
\end{equation}

Let us look for a vector field $\updelta = a \partial_x + \sum_{k=0}^\infty b_k \partial_{y^{(k)}}$. According to the second commutativity condition~\eqref{eq::commutativity2}, we have $b_{k} = \uptau^k b_0$. For simplicity's sake, we will write $b$ for $b_0$. We want to show that both $a$ and $b$ depend only on the state variables $x$ and $y$. This can be done by several means: (i) direct proof, (ii) relying on theorem~\ref{thm::rouchon_like} or eventually (iii) relying on theorem~\ref{th::one_control}. 

Let us start with the direct approach.

\begin{lemma}
For this vector field to be integrable, it is necessary and sufficient that $a$ and $b = b_0$ depend on $x$ and $y$ only.     
\end{lemma}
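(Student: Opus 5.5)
We prove sufficiency first, then necessity by a direct order-growth argument in the spirit of th.~\ref{th::one_control}, without appealing to it.

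\textbf{Sufficiency.} Suppose $a$ and $b$ depend only on $x$ and $y$. Then $\updelta x=a(x,y)$ and $\updelta y=b(x,y)$. By induction, every $\updelta^k x$ and every $\updelta^k y$ is a function of $x$ and $y$ only, because $\updelta$ maps $\mathcal{O}$ functions of $(x,y)$ to functions of $(x,y)$. So the iterated order is $e=0<\infty$ and $\updelta$ is integrable by def.~\ref{def::integrability}.

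We still have to check that such a $\updelta$ can be a symmetry at all. Condition \eqref{eq::commutativity1} reads $\updelta(\dot y^2)=2\dot y\,\uptau b=\uptau a$. Expanding both sides gives
\[
2\dot y(b_x\dot y^2+b_y\dot y)=a_x\dot y^2+a_y\dot y.
\]
This is a polynomial identity in $\dot y$, which gives $b_x=0$ and $a_x=2b_y$, $a_y=0$. Sufficiency only needs integrability, so these constraints are not needed for it; they are recorded here only for later use.

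\textbf{Necessity.} Assume $\updelta$ is an integrable symmetry. Set $r=\ord_\uptau b$, meaning the order in $y$; note that $x$ has no independent derivatives. Suppose first that $r\ge 1$. Write $\upsilon=y^{(r)}$ and assume $\partial_\upsilon b\neq0$ on a dense open set. Condition \eqref{eq::commutativity1} gives $\uptau a=2\dot y\,\uptau b$. The right-hand side is affine in $y^{(r+1)}$ with coefficient $2\dot y\,\partial_\upsilon b$. Hence $a$ is of order exactly $r$ in $y$, with $\partial_{y^{(r)}}a=2\dot y\,\partial_{y^{(r)}}b$; this uses $\dot y\neq0$ on a dense set.

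Next compute $\updelta^2 y=\updelta b=a\,\partial_x b+\sum_{k\le r}\uptau^k b\,\partial_{y^{(k)}}b$. The dominant term is $\uptau^r b\,\partial_{y^{(r)}}b$, which is affine in $y^{(2r)}$ with coefficient $(\partial_{y^{(r)}}b)^2\neq0$. All remaining terms have order at most $2r-1$ when $r\ge1$. Inductively, $\updelta^k y$ has order $kr$, with leading coefficient a power of $\partial_{y^{(r)}}b$. This sequence of orders is unbounded, which contradicts integrability. Therefore $r=0$, that is, $b=b(x,y)$.

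It remains to handle $a$. With $b=b(x,y)$, the right-hand side $2\dot y\,\uptau b=2\dot y(b_x\dot y^2+b_y\dot y)$ depends only on $x$, $y$, $\dot y$. If $a$ had order $s\ge1$, then $\uptau a$ would contain $y^{(s+1)}$ linearly with a nonzero coefficient, while the right-hand side has order at most $1$. This forces $s=0$, so $a=a(x,y)$.

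The main obstacle is the order bookkeeping in the inductive step for $\updelta^k y$. One must show that the leading coefficient never vanishes and that the $a\,\partial_x b$ contribution cannot raise or cancel the top order. The plan is to prove by induction that $\updelta^k y=(\partial_{y^{(r)}}b)^{k}\,y^{(kr)}\cdot c_k+(\text{lower order})$ with $c_k$ nonvanishing generically. This uses the fact that $a$ has order $r$ and that $\partial_x b$ has order at most $r$, so the $a\,\partial_x b$ term reaches order at most $(k-1)r+r$ only through lower-degree contributions.
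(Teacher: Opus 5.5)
Your proof is correct, and its core coincides with the paper's. If $b$ has order $r\ge1$, the term $\uptau^r b\,\partial_{y^{(r)}}b$ forces $\ord\updelta^k y=kr$, which contradicts integrability.

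The difference is in how you treat $a$. The paper lumps $a$ and $b$ together and says that ``the same argument'' disposes of $a$. No growth argument can do that. Once $b=b(x,y)$, one has $\ord\updelta F\le\max(\ord a,\ord F)$ for every $F$, so the field is integrable whatever $a$ is. For instance, $a=\dot y$, $b=0$ gives $\updelta^2x=0$. Such an $a$ is excluded only by the symmetry condition $\uptau a=2\dot y\,\uptau b$.

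You use exactly that condition, twice:
\begin{itemize}
\item to get $\ord a=r$ when $r\ge1$, which guarantees that the term $a\,\partial_x$ never reaches the top order in the induction (a point the paper passes over);
\item to force $\ord a=0$ once $b=b(x,y)$.
\end{itemize}
So your version is the complete one. You also make the sufficiency direction explicit, which the paper leaves implicit.

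Your closing paragraph overstates the remaining difficulty. The order of a product is at most the maximum of the orders, not their sum. Hence $a\,\partial_x(\updelta^{k-1}y)$ has order at most $\max(r,(k-1)r)<kr$ for $k\ge2$. The induction then gives exactly $\partial_{y^{(kr)}}\updelta^k y=(\partial_{y^{(r)}}b)^k$. This is nonzero on a dense open set, because analytic functions on a connected domain form an integral domain. No extra factor $c_k$ is needed.
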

\begin{proof}
Relying on the system equation, $\dot{x} = \dot{y}^2$, one can parametrize the diffiety with $x,y,\dot{y}, \ddot{y}, ....$. Now if $a$ or $b$ depends on $x,y, \cdots y^{(r)}$, with $r \geq 1$, then
$\updelta b = a \partial_x b + \sum_{k=0}^r b_{k} \partial_{y^{(k)}} b$. But on the other hand, $b_r = \uptau^r b$, so that it depends on $y^{(2r)}$. Recursively $\updelta^p b$ depends on $y^{(pr)}$. Therefore the sequence $b,\updelta b, \updelta^2 b, ...$ is functionally independent,
contradicting the integrability of $\updelta$. The same argument shows that $a$ also has to depend on $x$ and $y$ only. 
\end{proof}

Now let us show how theorem~\ref{thm::rouchon_like} can be applied. Consider the operator $D=A \partial_{\dot{x}} + B \partial_{\dot{y}}$. Aplying this operator twice to equation $\dot{x} = \dot{y}^2$ yields: $A = 2 \dot{y} B$ and $2B = 0$, so that $D$ actually vanish identically. Then theorem~\ref{thm::rouchon_like} and corollary~\ref{cor::k_e} imply that any integrable symmetry can only depend on the variable $x$ and $y$ at order $0$. 

Finally one can also directly apply theorem~\ref{th::one_control} to get the same conclusion.

Now let us check the conditions under which $\updelta$ commutes with $\uptau$. First we have:
$$
\begin{array}{l}
\updelta \uptau x = \updelta \dot{y}^2 = 2 \dot{y} \updelta \dot{y} = 2 \dot{y} \dot{b}  = 2 \dot{y} \uptau b = 2 \dot{y} (\dot{y}^2 \partial_x b + \dot{y} \partial_y b)\\
\uptau \updelta  x = \uptau a = \dot{y}^2 \partial_x a + \dot{y} \partial_y a
\end{array}
$$

Therefore we have the following equation:
$$
\dot{y}^2 \partial_x a + \dot{y} \partial_y a = 2 \dot{y} (\dot{y}^2 \partial_x b + \dot{y} \partial_y b)
$$

\paragraph{Direct Approach}

We proceed to the following computations:
\begin{enumerate}
    \item Derivating this equation with respect to $\dot{y}$ three times yields $\partial_x b = 0$. 
    
    \item Then derivating it twice again with respect to $\dot{y}$ yields $\partial_x a = 2 \partial_y b$. 
    
    \item Finally derivating it one time leads to $\partial_y a = 0$. 
\end{enumerate}

This implies $\partial_y \partial_x a = 2 \partial_y^2 b = 0$. Combined with $\partial_x b = 0$, we get that $b = \beta_1 y + \beta_0$. And then $a = \alpha_1 x + \alpha_0$, where $\alpha_i, \beta_i \in \R$. 

Therefore in this example one-parameter local diffeormorphism are defined by vector fields of the form:
$$
\updelta = (\alpha_1 x + \alpha_0) \partial_x + (\beta_1 y + \beta_0) \partial_y + \beta_1 \sum_{k=1}^\infty y^{(k)} \partial_{y^{(k)}}
$$

\paragraph{Parametrization Approach}

Now we shall redo the computation using the parametrization approach introduced in section~\ref{sec::parametrization}. 

For this purpose, we will consider the system under its explicit form~\eqref{eq::explicit_form_ex1}. In this context the vector fileds $\uptau$ and $\updelta$ has to be updated as follows:
$$
\begin{array}{rcl}
\uptau & = & u^2 \partial_{x} + u \partial_y + \sum_{\ell=0}^\infty u^{(\ell+1)} \partial_{u^{(\ell)}} \\
\updelta & = & a_1 \partial_{x} + a_2 \partial_y + \sum_{\ell=0}^\infty b^{(\ell+1)} \partial_{u^{(\ell)}} 
\end{array} 
$$

Then the extended system becomes:
$$
\left \{ \begin{array}{rcl}
\dot{x} & = & u^2 \\
\dot{y} & = & u \\
\dot{a}_1 & = &  2ub \\
\dot{a}_2 & = & b
\end{array} \right.
$$

Therefore the extended Cartan field $\hat{\uptau}$ is given by:
$$
\hat{\uptau} = \uptau + 2ub \partial_{a_1} + b \partial_{a_2} + \sum_{\ell=0}^\infty b^{(\ell+1)} \partial_{b^{(\ell)}}
$$

Now let us form the Lie brackets that appear in the distributions $(\updelta_p)_p$. We have:
$$
\begin{array}{rcl}
[\hat{\uptau},\partial_b] & = & [2ub\partial_{a_1},\partial_b] + [b\partial_{a_2},\partial_b] = -2u \partial_{a_1} - \partial_{a_2} \\
\hbox{[} \hat{\uptau},[\hat{\uptau},\partial_b] \hbox{]} & = & [\uptau,-2u \partial_{a_1} - \partial_{a_2}] = -2 \dot{u} \partial_{a_1} 
\end{array}
$$

Therefore the distribution $\updelta_2 = <\partial_b,-2u \partial_{a_1} - \partial_{a_2},-2 \dot{u} \partial_{a_1}> = <\partial_b, \partial_{a_1}, \partial_{a_2}>$ is a three-dimensional one, as expected according to section~\ref{sec::parametrization}, proposition~\ref{prop::algo}.

Then we look for a first integral of $\updelta_1 = <\partial_b,-2u \partial_{a_1} - \partial_{a_2}>$, which yields:
$$
\xi = a_1 - 2u a_2.
$$

Hence the parametrization follows. We have: 
$$
\begin{array}{rcl}
a_2 & = & -\frac{\hat{\uptau}{\xi}}{2 \dot{u}} \\ 
a_1 & = & \xi + 2u \frac{\hat{\uptau}{\xi}}{2 \dot{u}}
\end{array}
$$

And then $b = \hat{\uptau} a_2 = - \frac{\hat{\uptau} \hat{\uptau} \xi}{2 \dot{u}} +  \ddot{u} \frac{\hat{\uptau} \xi}{2 \dot{u}^2}$

At this stage, $\xi$ as a function of $x,y,u,\dot{u}, \cdots, u^{(r)}$ for some order $r \geq 0$ is arbitrary. Adding the constraint that the field $\updelta$ has to be integrable will impose restrictions on $\xi$.  

Indeed for the field $\updelta$ to be integrable, we need that $a_1$ and $a_2$ will depend on $x,y$ only. Since $a_2 = - \frac{\hat{\uptau}\xi}{2 \dot{u}}$, we have $\partial_{u^{(\ell)}} \xi = 0$ for $\ell \geq 1$. Therefore we have:
$$
\hat{\uptau} \xi = u^2 \partial_x \xi + u \partial_y \xi + \dot{u} \partial_u \xi.
$$

Therefore we have: $a_2 = - \frac{u^2 \partial_x \xi + u \partial_y \xi}{2 \dot{u}} - \frac{1}{2} \partial_u \xi$, which implies: $u^2 \partial_x \xi + u \partial_y \xi = 0$ (through the derivation with respect to $\dot{u}$) and then $\partial_u^2 \xi = 0$ (through the derivation with respect to $u$). 

Hence we get: $\xi = \alpha(x,y) u + \beta(x,y)$, which we can incorporate into $u^2 \partial_x \xi + u \partial_y \xi = 0$. This is equivalent to $(\partial_x \alpha) u^3 + (\partial_x \beta + \partial_y \alpha)u^2 + (\partial_y \beta) u = 0$. This reads $\partial_x \alpha = \partial_x \beta + \partial_y \alpha = \partial_y \beta = 0$. Therefore $\alpha$ depends on $y$ only and $\beta$ on $x$ only and $\partial_x \beta + \partial_y \alpha = 0$. The last equation is identically zero, which implies that $\alpha$ and $\beta$ are affine functions of $y$ and $x$ respectively. 

Thus we have: $\xi = (ay+b)u + (cx+d)$, which in turn leads to $a_2 = -\frac{1}{2}(ay+b)$ and $a_1 = (ay+b)u + (cx+d) - u(ay+b) = cx+d$. 

This is identical to the previous form obtained by the first approach.

\paragraph{Analysis}

Both approaches show that the set of compatible integrable vector fields is a distribution a real vector space of dimension $4$.

Notice that such a vector field is complete, that is the flow is defined other $\R$. In other words, the pseudogroups $G_\updelta$ are full groups. The integral curves that degenerate these groups are given by:
$$
\left \{ \begin{array}{rcl}
\gamma_1(s) & = &  \frac{k_1}{\alpha_1} e^{\alpha_1 s} - \frac{\alpha_0}{\alpha_1}\\
\gamma_2(s) & = &  \frac{k_2}{\beta_1} e^{\beta_1 s} - \frac{\beta_0}{\beta_1}\\
\gamma_{0,\ell}(s) & = & k_\ell e^{\beta_0 s}
\end{array} \right.,
$$
for some non-zeros scalar $k_1,k_2,k_\ell$ where $\ell \in \N$. 

\subsection{A second example with no nontrivial symmetries}\label{subsec::no_nontrivial}

In this section, we shall consider the following example: $\dot{x} = x^2 + y^2 + \dot{y}^2 + \dot{y}^3$. 

Notice that this system is Lie--B\"acklund equivalent to the system:
\begin{equation}
	\label{eq::explicit_form_ex2}
	\left \{ \begin{array}{rcl}
		\dot{x} & = & x^2 + y^2 + u^2 + u^3 \\
		\dot{y} & = & u
	\end{array} \right.
\end{equation}

The Cartan field can be extracted from the implicit form, which yields:
$$
\uptau = (x^2 + y^2 + \dot{y}^2 + \dot{y}^3) \partial_x + \sum_{k=0}^\infty y^{(k+1)} \partial_{y^{(k)}}
$$

Let us consider the field $\updelta = a \partial_x + \sum_{k=0}^\infty b_k \partial_{y^{(k)}}$. Here again, equations~\ref{eq::commutativity2} yields $b_k = \uptau^k b_0$. Let us simply write $b$ for $b_0$. 

\begin{lemma}
The functions $a$ and $b$ can only depend on $x$ and $y$.
\end{lemma}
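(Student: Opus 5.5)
The plan is to follow the pattern of the previous subsection. The lemma is implicitly stated under the assumption that $\updelta$ is an integrable infinitesimal symmetry. Since the system has a single control ($m=1$), the quickest route is to invoke th.~\ref{th::one_control}: any integrable symmetry has $\ord\updelta=0$. Hence $a=\updelta x$ and $b=\updelta y$ are functions of order $0$ in the coordinates $x,y,\dot y,\ddot y,\dots$ of the diffiety, which means they depend on $x$ and $y$ only. The diffiety is indeed parametrized by $x,y,\dot y,\ldots$ through the implicit equation, exactly as in the first example, so rem.~\ref{rem::controls} applies with $u=\dot y$.

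For a self-contained argument I will also give a direct proof, mirroring the lemma of subsection~\ref{subsec::ex_nonflat}. The first step is a dichotomy on $b$. Suppose $b$ depends on $y^{(r)}$ with $r\ge1$ maximal, possibly together with $x$. Then $b_r=\uptau^r b$ is of order exactly $2r$ in $y$, since its leading term is $\partial_{y^{(r)}}b\,y^{(2r)}$ with nonzero coefficient on a dense open set. Consequently
$$
\updelta b=a\,\partial_x b+\sum_{k\le r}b_k\,\partial_{y^{(k)}}b
$$
contains $y^{(2r)}$ linearly through $b_r\partial_{y^{(r)}}b$. I must check that the term $a\partial_x b$ cannot cancel it; this is where the order of $a$ matters. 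The commutation relation $\uptau a=\updelta f=2xa+2yb+(2\dot y+3\dot y^2)\dot b$ gives $\ord_y a=\ord_y b+1$ when $b$ has positive order. So $a$ has order $r+1\le 2r$ for $r\ge1$, with equality only when $r=1$. In that case one compares the coefficients of $y^{(2)}$ more carefully, or uses that $\partial_x b\,a$ contributes only linearly while the $b_1\partial_{\dot y}b$ part is nonlinear in $\ddot y$.

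By induction, $\updelta^p b$ has order $(p+1)r$ or more precisely strictly increasing order. This contradicts the integrability condition of def.~\ref{def::integrability}. Hence $\ord b=0$. Then the relation $\uptau a=2xa+2yb+(2\dot y+3\dot y^2)\uptau b$ shows that $\uptau a$ has order at most $1$, so $a$ has order $0$ as well.

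The main obstacle is the bookkeeping in the direct approach, namely showing that the leading-order term in $\updelta^p b$ does not cancel, especially in the borderline case $r=1$ where $\ord a$ may coincide with $\ord b_r$. I would therefore rely primarily on th.~\ref{th::one_control} (or on th.~\ref{thm::rouchon_like} with cor.~\ref{cor::k_e}). For the Rouchon-type criterion, applying $D=A\partial_{\dot x}+B\partial_{\dot y}$ to $\dot x-x^2-y^2-\dot y^2-\dot y^3$ three times gives $6B^3=0$ and then $A=0$, so $\mathcal{J}_D$ has only the trivial solution. That yields iterated order $e=0$ directly, and hence $a,b$ depend only on $x,y$.
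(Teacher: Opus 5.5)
Your argument is correct, but your main route is not the paper's. You settle the lemma by citing th.~\ref{th::one_control}: with $m=1$, integrability forces $\ord\updelta=0$. The paper instead proceeds exactly as in your last paragraph. It applies $D=A\partial_{\dot x}+B\partial_{\dot y}$ three times to the equation, obtains $B=0$ and then $A=0$, and concludes by th.~\ref{thm::rouchon_like} and cor.~\ref{cor::k_e}. Your $6B^3=0$ is the accurate version of the paper's printed $6B=0$, and its intermediate $2B+6B\dot y=0$ should likewise read $B^2(2+6\dot y)=0$.

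Citing th.~\ref{th::one_control} is immediate and needs no computation, and the paper itself notes in the first example that it gives the same conclusion. However, it only applies to a single control. The Sluis--Rouchon-type criterion is the tool that carries over to the multi-input examples of sec.~\ref{sec::multi}, which is presumably why the paper uses it here too.

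One correction to your supplementary direct argument. Suppose $\ord b=r\ge1$ and compare the highest derivatives of $y$ in $\uptau a=2xa+2yb+(2\dot y+3\dot y^2)\uptau b$. If $\ord a>r$ the left side has the higher order, and if $\ord a<r$ the right side does. So this gives $\ord a=\ord b$, not $\ord b+1$. Hence $a\,\partial_x b$ has order at most $r<2r$, and the ``borderline case'' $r=1$ never arises. An easy induction then shows that, for $p\ge1$, $\updelta^p b$ is linear in $y^{((p+1)r)}$ with coefficient $(\partial_{y^{(r)}}b)^{p+1}\neq0$. The orders grow without bound, so the direct route also closes without difficulty.
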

\begin{proof}



The result follows from theorem~\ref{thm::rouchon_like} and corollary~\ref{cor::k_e}. It is enough to consider the operator $D = A \partial_{\dot{x}} + B \partial_{\dot{y}}$ and apply it three times to the system, which yields: $A = 2B \dot{y} + 3 B \dot{y}^2$, $2B + 6 B \dot{y} = 0$ and finally $6B = 0$, which shows that $D \equiv 0$. This, in turn, implies that $a$ and $b$ depend only on $x$ and $y$. 
\end{proof}

Now, we know the Lie bracket $[\updelta,\uptau]$ must vanish. 

Let us compute $\updelta \uptau x$:
$$
\updelta \uptau x = \updelta (x^2 + y^2 + \dot{y}^2 + \dot{y}^3) = 2x \updelta x + 2 y \updelta y + 2 \dot{y} \updelta \dot{y} + 3 \dot{y}\updelta \dot{y} = 2 x a + 2y b + 2 \dot{y}b_1 + 3 \dot{y}^2 b_1. 
$$

As already mentioned above, $b_1 = \uptau b = (x^2 + y^2 + \dot{y}^2 + \dot{y}^3) \partial_x b + \dot{y} \partial_y b$, so that we get:
$$
\updelta \uptau x = 2 x a + 2y b + (2 \dot{y} + 3 \dot{y}^2) [(x^2 + y^2 + \dot{y}^2 + \dot{y}^3) \partial_x b + \dot{y} \partial_y b]. 
$$ 

On the other hand, we have: $\uptau \updelta x = \uptau a = (x^2 + y^2 + \dot{y}^2 + \dot{y}^3) \partial_x a + \dot{y} \partial_y a$. 

Therefore $[\updelta,\uptau] = 0$ if, and only if, we have:
\begin{equation}
\label{eq::vanishing_bracket_ex2}
(x^2 + y^2 + \dot{y}^2 + \dot{y}^3) \partial_x a + \dot{y} \partial_y a = 2 x a + 2y b + (2 \dot{y} + 3 \dot{y}^2) [(x^2 + y^2 + \dot{y}^2 + \dot{y}^3) \partial_x b + \dot{y} \partial_y b].
\end{equation}

We proceed with the following computations:
\begin{enumerate}
\item Computing the derivative of equation~\eqref{eq::vanishing_bracket_ex2} with respect to $\dot{y}$ up to order 5 leads to $\partial_x b = 0$. 
Then we get the following simplified equation:
\begin{equation}
\label{eq::vanishing_bracket_ex2_1}
(x^2 + y^2 + \dot{y}^2 + \dot{y}^3) \partial_x a + \dot{y} \partial_y a = 2 x a + 2y b + (2 \dot{y} + 3 \dot{y}^2) \dot{y} \partial_y b.
	\end{equation}

\item Computing the derivative of equation~\eqref{eq::vanishing_bracket_ex2_1} with respect to $\dot{y}$ again, up to order 3, yields $\partial_x a = 3 \partial_y b$. This leads to the second following simplification:
	\begin{equation}
	\label{eq::vanishing_bracket_ex2_2}
	(x^2 + y^2 + \frac{1}{3} \dot{y}^2) \partial_x a + \dot{y} \partial_y a = 2 x a + 2y b.
\end{equation}

\item If we derive this latest equation two times with respect to $\dot{y}$ again, we get: $\partial_x a = $. Together with $\partial_x a = 3 \partial_y b$, this implies $\partial_y b = 0$. 

Therefore both partial derivatives of $b$ vanishes, so that $b$ is a constant that we will denote $\beta$ to emphasize this fact. 

The Lie bracket vanishing equation is once again simplified to this form:
\begin{equation}
	\label{eq::vanishing_bracket_ex2_3}
	\dot{y} \partial_y a = 2 x a + 2y \beta.
\end{equation} 

\item Taking the derivative of this equation with respect to $x$, yields: $\dot{y}\partial_x \partial_y a = 2a + x \partial_x a$. Since $\partial_x a = 0$ and $\partial_x \partial_y a = \partial_y \partial_x a$, we get: $a = 0$, and therefore $\beta = 0$, so that $b=0$. 
\end{enumerate}

As a consequence, we can conclude that this system has no one-parameter local symmetry at all.

\section{Examples and Computations for Multi-Input Systems}\label{sec::multi}

In this section, we consider multi-input systems, starting with the case of symmetries that depend only of the state, when systems do not satisfy the Sluis -- Rouchon criterion. In the generic case, the set of integrable symmetries is reduced to $0$. We then exhibit example with nontrivial solutions. We then consider flat systems for which symmetries of arbitrary orders do exist.

\subsection{Systems that do not satisfy the Sluis -- Rouchon criterion}\label{subsec::not_Rouchon}

Using th.~\ref{thm::rouchon_like}, we have seen that for such systems, all the integrable symmetries $\updelta$ are such that $\updelta x_i=a_i(x_1, \ldots, x_n)$. This implies that the sum of two integrable symmetries, or their Lie bracket is integrable, so that in this case integrable symmetries form a distribution of vector fields. Example~\ref{subsec::ex_nonflat} is a special case of this situation in the case of a single control.

\subsection{Generic systems}\label{subsec::generic}

We exhibit an example of system that admit no non trivial solution, and deduce that this is the case of generic algebraic system of a degree at least equal to their number of state variables.

\begin{theorem}\label{th::generic}
    i) Consider the diffiety defined by 
    \begin{equation}\label{eq::generic}
        \dot x_{m+j}=\sum_{i=1}^m (\dot
x_i)^{i+1}+\sum_{i=1}^n j^ix_i^{2},\>1\le j\le n-m.
    \end{equation} The controls here are $u_i=\dot x_i$, $1\le i\le m$. This diffiety admits no non trivial infinitesimal symmetry.

ii) Consider now a generic equation of order $1$ and degree $d\ge m$ $G(x_1', \ldots, x_n',x_1, \ldots, x_n)=0$, with coefficients $c_{\mu}$ for ${\mu\in M}$, where $M$ denotes all monomials of degree at most $d$, in the $x_i$ and $x_i'$. There exists an hypersurface $H$ in the space of coefficients $c_\mu$ such that all diffieties defined by coefficient outside $H$ admit no non trivial symmetries.
\end{theorem}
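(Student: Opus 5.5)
The plan is to reduce first to symmetries of order $0$, using th.~\ref{thm::rouchon_like} and cor.~\ref{cor::k_e}, and then to solve the commutation equations~\eqref{eq::commutativity1} by comparing coefficients of monomials in the controls. Throughout, "symmetry" is read as \emph{integrable} infinitesimal symmetry, which is the setting where cor.~\ref{cor::k_e} applies. Part ii) will then follow by a Zariski-openness argument, with i) as the witness that the open set is nonempty.

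\textbf{Step 1: order $0$.} Take $D=\sum_{i=1}^{n}A_i\partial_{\dot x_i}$ and apply its powers to $P_j:=\dot x_{m+j}-\sum_{i=1}^m\dot x_i^{i+1}-\sum_{i=1}^n j^i x_i^2$. Since $DA_i=0$, we get $D^{m+1}P_j=-(m+1)!\,A_m^{m+1}$ up to sign, because only $\dot x_m^{m+1}$ has degree $m+1$ in the $\dot x$. Hence $A_m=0$. With $A_m=0$, the top surviving term of $D^{m}P_j$ is $m!\,A_{m-1}^m$, so $A_{m-1}=0$. A descending induction gives $A_1=\cdots=A_m=0$. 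Finally $DP_j=A_{m+j}=0$. So $\mathcal{J}_D$ has only the trivial solution, and cor.~\ref{cor::k_e} gives iterated order $e=0$: every $a_i=\updelta x_i$ is a function of $x_1,\dots,x_n$ only.

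\textbf{Step 2: the commutation equations.} Write $u_i=\dot x_i$ and $f_j$ for the right-hand side of~\eqref{eq::generic}. Condition~\eqref{eq::commutativity1} reads
$$\sum_{i=1}^m (i+1)u_i^{i}\,\uptau a_i+2\sum_{i=1}^n j^i x_i a_i=\sum_{k=1}^m u_k\partial_{x_k}a_{m+j}+\sum_{l=1}^{n-m} f_l\,\partial_{x_{m+l}}a_{m+j},$$
with $\uptau a_i=\sum_k u_k\partial_{x_k}a_i+\sum_l f_l\partial_{x_{m+l}}a_i$. I would treat this as a polynomial identity in $u_1,\dots,u_m$ with coefficients analytic in $x$.

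First, the left side contains $u_i^{i}u_k^{k+1}\partial_{x_{m+l}}a_i$, of total $u$-degree up to $2m+1$. The right side has $u$-degree at most $m+1$. Isolating top-degree monomials such as $u_m^m u_m^{m+1}$, and then going downward, gives $\partial_{x_{m+l}}a_i=0$ for $i\le m$.

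Next, the monomials $u_i^iu_k$ appear only on the left when $i\ge 2$, or when $i=1$ and $k\neq 1$; this gives $\partial_{x_k}a_i=0$. The remaining case $i=1,k=1$ must be matched against the $u_1^2$ terms on the right, which come from $\sum_l f_l\partial_{x_{m+l}}a_{m+j}$. The outcome should be that $a_1,\dots,a_m$ are constants, and that $\sum_l \partial_{x_{m+l}}a_{m+j}$ is tied to $\partial_{x_1}a_1$.

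Matching the $u_i^{i+1}$ coefficients then forces $\sum_l\partial_{x_{m+l}}a_{m+j}=(i+1)\cdot 0$, so this sum vanishes. The linear $u_k$ terms give $\partial_{x_k}a_{m+j}=0$.

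What remains is a $u$-free identity of the form
$$2\sum_{i=1}^n j^i x_i a_i=\sum_l\Big(\sum_i l^i x_i^2\Big)\partial_{x_{m+l}}a_{m+j}.$$
Here I would use the Vandermonde structure $(j^i)$ to separate the index $j$, and compare the $x$-degrees: the right side is quadratic times a derivative. This should force first $a_{m+j}=0$ and then $a_i=0$ for $i\le m$ (for example, the constant terms $a_i$ multiply $x_i$ with independent Vandermonde rows).

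I expect this last step to be the main obstacle, because the Vandermonde matrix $(j^i)$ with $1\le j\le n-m$ and $1\le i\le n$ is not square. One has to check carefully that the available $x$-monomials suffice to kill every unknown. If this fails, I would fall back on formal power series in $x$ and compare homogeneous components degree by degree.

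\textbf{Step 3: part ii).} For a generic equation $G=0$ of degree $d\ge m$, the only solution of $\mathcal{J}_D$ is trivial as soon as some resultant-type polynomial in the $c_\mu$ does not vanish. This is a Zariski-open condition, and i) shows the set is nonempty, so cor.~\ref{cor::k_e} again reduces to order $0$. The commutation condition is linear in the $a_i$, with coefficients polynomial in the $c_\mu$. Comparing coefficients of the monomials in $\dot x$ and of the homogeneous components of the $a_i$ in $x$ yields, degree by degree, linear systems whose matrices depend polynomially on the $c_\mu$. A nonzero minor certifying injectivity is exhibited by the system of i); I would take $H$ to be its vanishing locus. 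Outside $H$ only $a=0$ survives.
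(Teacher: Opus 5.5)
Your Step~1 is correct and is the paper's reduction to order~$0$, and Steps~2--3 follow the paper's outline. The gap is in Step~2, where the comparison of control monomials goes wrong in two places.

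All the $f_l$ share the same control part $S(u)=\sum_{i\le m}u_i^{i+1}$. Hence $\uptau a=\sum_{k\le m}u_k\partial_{x_k}a+S(u)\,\sigma(a)+\rho(a)$, with $\sigma:=\sum_l\partial_{x_{m+l}}$, $\rho:=\sum_l Q_l\,\partial_{x_{m+l}}$ and $Q_l:=\sum_i l^ix_i^2$. Comparing monomials in $u$ then gives exactly the following:
\begin{itemize}
\item $\sigma(a_i)=0$ for $i\le m$, from $u_i^iu_k^{k+1}$. This is only the sum, not $\partial_{x_{m+l}}a_i=0$ for each $l$ once $n-m\ge2$.
\item $\partial_{x_k}a_i=0$ for $k\ne i$.
\item $(i+1)\partial_{x_i}a_i=\sigma(a_{m+j})$ for \emph{every} $i\le m$. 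The monomial $u_i^{i+1}$ also occurs on the right through $S(u)$, not only for $i=1$.
\item $\rho(a_i)=0$ for $i\ge2$.
\item $\partial_{x_1}a_{m+j}=2\rho(a_1)$, and $\partial_{x_k}a_{m+j}=0$ for $2\le k\le m$.
\item The $u$-free identity $2\sum_i j^ix_ia_i=\rho(a_{m+j})$.
\end{itemize}

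The control part is weighted homogeneous. So the scaling $a_i=\lambda x_i/(i+1)$ for $i\le m$, $a_{m+l}=\lambda x_{m+l}$ satisfies every relation except the last. No comparison of $u$-monomials can therefore make $a_1,\dots,a_m$ constant. Your deduction of $\sum_l\partial_{x_{m+l}}a_{m+j}=0$ rests on $\partial_{x_i}a_i=0$, which came from ignoring exactly those right-hand $u_i^{i+1}$ terms.

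Everything thus rests on the $u$-free identity, with many more unknown functions than you assume, and that is precisely the step you leave open. It has to be handled by separating the dependence on each variable. For instance, for $2\le k\le m$ only $x_ka_k$ and the terms $l^kx_k^2$ involve $x_k$. This forces $a_k=\gamma_kx_k$ with $2j^k\gamma_k=\sum_l l^k\partial_{x_{m+l}}a_{m+j}$. When $n-m=1$, this together with $(k+1)\gamma_k=\sigma(a_{m+1})$ gives $\gamma_k=0$. For $m=1$ or $n-m\ge2$, a genuine elimination is still required. The paper's own steps a)--c) take the same shortcut, keeping only $\partial_{x_{m+j}}$ in \eqref{eq::gen2}, so you will not find this argument there either.

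Step~3 has two further problems.
\begin{itemize}
\item \emph{The witness.} Statement ii) concerns a single equation, so i) serves as a witness only when $n-m=1$, and then it has degree $m+1$. It certifies nothing for $d=m$, where the claim is in fact false. For $m=d=1$, every equation $c_0+c_1x_1+c_2x_2+c_3x_1'+c_4x_2'=0$ admits the integrable symmetry $c_2\partial_{x_1}-c_1\partial_{x_2}$.
\item \emph{A single hypersurface.} Comparing homogeneous components of the $a_i$ degree by degree gives infinitely many linear systems and minors. Their zero sets form a countable union of proper subvarieties, not one hypersurface $H$. You need one finite system: the $\dot x$-coefficient equations together with a fixed finite number of their $x$-derivatives, read as a linear system on a finite jet of $(a_1,\dots,a_n)$ at a point. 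You must show that this system has full column rank, at one point, for the coefficients of i). That yields a single polynomial in $(c_\mu,x)$ that is not identically zero, and it presupposes a complete proof of i).
\end{itemize}
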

  \begin{proof} i) Applying $m$ times the operator $D:=\sum{i=1}^m A_i\partial_{\dot x_i}$ of th.~\ref{thm::rouchon_like} to any of those equations, we get $A_m^m=0$, repeating the process in sequence for all $1\le i<m$ by decreasing order, we get $A_i^i=0$, so that the only solution is $D=0$ and the theorem implies that for any integrable symmetry defined by
$\updelta x_{i}=a_{i}$, we need have $\ord\updelta=0$. 

Then, such a symmetry $\updelta$ must satisfy for all $1\le j\le n-m$:
\begin{equation}\label{eq::gen}
  \uptau a_{m+j}=\updelta \dot x_{m+1}=\updelta\left(\sum_{i=1}^m (\dot
x_i)^{1+i}+\sum_{i=1}^n j^ix_i^{2}\right)=
\sum_{i=1}^m (i+1)(\dot x_i)^{i}\uptau a_i+2\sum_{i=1}^n j^ix_i a_i.
\end{equation}
Replacing the $\uptau a_i$ by their expressions, this imply:
  
\begin{equation}\label{eq::gen2}
\begin{array}[b]{l}
\displaystyle\left(\sum_{i=1}^m (\dot x_i)^{i+1}+\sum_{i=1}^n j^ix_i^{2}\right)\partial_{x_{m+j}}
a_{m+j}+\sum_{i=1}^{m}\dot x_{i}\partial_{x_{i}}a_{m+j}=   \\
\displaystyle\hbox to 2cm{\hfill}    \sum_{i=1}^m (i+1)(\dot x_{i})^{i}\left(\left(\sum_{h=1}^m (\dot x_h)^{h+1}+\sum_{k=1}^n
x_k^{2}\right)\partial_{x_{m+1}}a_{i}+\sum_{j=1}^m \dot x_{j}\partial_{x_j}a_i\right)+2\sum_{i=1}^{n}j^ix_{i}a_{i}.
\end{array}
\end{equation}

a) Considering the $j^{\rm th}$ equation, for $i$ from $m$ to $1$, we apply to it $\partial_{\dot x_i}^{i}\partial_{\dot x_{m}}^{m+1}$,
which provides $\partial_{x_{m+j}}a_{i}=0$, then $\partial_{x_{1}}^{2}$,
which provides $\partial_{x_{m+j}}a_{m+j}=0$, then $\partial_{\dot
  x_{i}}^{i}\partial_{\dot x_{j}}$, for $(i,j)\in[1,m]^{2}$
  which gives $\partial_{x_{j}}a_{i}=0$, then
$\partial_{\dot x_{i}}$ for
$i\in[2,n]$, which gives $\partial_{x_{i}}a_{m+j}=0$.

b) We apply now $\partial_{x_{i}}$, for $i\in[1,m]$ that gives $a_{i}=0$.

c) We now have the system $\sum_{h=m+1}^n j^h x_ha_h=0$, for $m<j\le m$, that implies $a_i=0$, for $m<j\le n$.

Of this, we deduce: $\updelta=0$. 

ii) We consider the $m$ equations in the $A_i$ obtained by applying $D^i$, for $1\le i\le m$. The homogeneous resultant of these equations is a non zero polynomial $P$, depending on the $c_\mu$, $\mu\in M$. Indeed, it is non zero for the values corresponding to i), as there is then no nonzero solution.

Then, the linear equations corresponding to i) a), b) and c) have full rank for generic values of the $x_\mu$, as they have no solutions for the system considered in i). So, for values of the $c_\mu$ such that some determinant does not vanish, they admit non nonzero solution.
 \end{proof}

\subsection{A system with a priori known symmetries: $\dot{x_3} = \dot{x}_1^2 \dot{x}_2^2$}

By inspection, we see that the equation does not depend on the state, so that translation $x_i\mapsto x_i+c_i$ are symmetries. Furthermore, it is homegeneous of degree $1$ in $x_3$ and $2$ is $x_1$ and $x_2$, so that homotheties $(x_1,x_2,x_3)\mapsto (\alpha x_1, x_2, \alpha^2 x_3)$ or $(x_1,x_2,x_3)\mapsto (x_1, \alpha x_2, \alpha^2 x_3)$ are also symmetries.

As in previous examples, this system can be put in explicit form:
$$
\left \{ \begin{array}{rcl}
\dot{x}_1 & = & u_1 \\
\dot{x}_2 & = & u_2 \\
\dot{x}_3 & = & u_1^2 u_2^2 \\
\end{array} \right. 
$$

In that case, the Cartan vector field is given by:
$$
\uptau = u_1 \partial_{x_1} + u_2 \partial_{x_2} + u_1^2 u_2^2 \partial_{x_3} + \sum_{k=1}^2 \sum_{\ell \in \N} u_k^{(\ell+1)} \partial_{u_k^{(\ell)}} = \dot{x}_1 \partial_{x_1} + \dot{x}_2 \partial_{x_2} + \dot{x}_1^2 \dot{x}_2^2 \partial_{x_3} + \sum_{k=1}^2 \sum_{\ell \geq 1} x_k^{(\ell+1)} \partial_{x_k^{(\ell)}}
$$

As before, we are looking for an integrable vector $\updelta = a_1 \partial_{x_1} + a_2 \partial_{x_2} + a_3 \partial_{x_3} + \sum_{k=1}^2 \sum_{\ell \in \N} b_{k,\ell} \partial_{u_k^{(\ell)}} = a_1 \partial_{x_1} + a_2 \partial_{x_2} + a_3 \partial_{x_3} + \sum_{k=1}^2 \sum_{\ell \geq 1} b_{k,\ell} \partial_{x_k^{(\ell)}}$ that also commutes with $\uptau$. 

The commutativity system in this case is given by:
$$
\left \{ \begin{array}{rcl}
b_{1,1} & = & \uptau a_1  \\
b_{2,1} & = & \uptau a_2 \\
2 \dot{x}_1 \dot{x}_2^2 b_{1,1} + 2 \dot{x}_1^2 \dot{x}_2 b_{2,1} & = & \uptau a_3
\end{array} \right.
$$

This finally yields: 
\begin{equation}
\label{eq::commutativity_dotx3=dotx12dotx22}
\dot{a}_3 = 2 \dot{x}_1^2 \dot{x}_2 \dot{a}_2 + 2 \dot{x}_1 \dot{x}_2^2 \dot{a}_1    
\end{equation}

\begin{lemma}
The functions $a_i$ depend only on $x_1,x_2,x_3$ with no derivatives. 
\end{lemma}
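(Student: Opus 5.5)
The approach is the one used in Example~\ref{ex::2_2}: apply Theorem~\ref{thm::rouchon_like} through Corollary~\ref{cor::k_e}, and then rule out dependence on the controls at order $0$ separately.

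\textbf{Step 1: the iterated order is $0$.} Set $P:=\dot x_3-\dot x_1^2\dot x_2^2$ and $D:=A_1\partial_{\dot x_1}+A_2\partial_{\dot x_2}+A_3\partial_{\dot x_3}$ with $DA_i=0$. We have
\[
D^4P=-24A_1^2A_2^2 .
\]
So a common zero of the ideal $\mathcal{J}_D$ must have $A_1=0$ or $A_2=0$.
\begin{itemize}
\item If $A_1=0$, then $D^2P=-2A_2^2\dot x_1^2$. Since $\dot x_1$ does not vanish on a dense open set, this forces $A_2=0$.
\item The case $A_2=0$ is symmetric and forces $A_1=0$.
\item Then $DP=A_3=0$.
\end{itemize}
Hence $\mathcal{J}_D$ has no nontrivial solution. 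By Corollary~\ref{cor::k_e}, an integrable symmetry $\updelta$ has iterated order $e=0$. In particular $\ord_\uptau a_i=0$ for $i=1,2,3$. So every $a_i$ is a function of $x_1,x_2,x_3,\dot x_1,\dot x_2$, the order-$0$ coordinates in the explicit form with $u_k=\dot x_k$.

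\textbf{Step 2: no dependence on $\dot x_1,\dot x_2$.} The condition $e=0$ applies to all iterates $\updelta^k x_i$. For $i=1,2$ we have $\updelta^2x_i=\updelta a_i$. This contains the term $\partial_{\dot x_k}a_i\,\uptau a_k$, and $\uptau a_k$ involves $\ddot x_1,\ddot x_2$ linearly through $\partial_{\dot x_j}a_k\,\ddot x_j$. The argument then proceeds as follows.
\begin{itemize}
\item Consider the $2\times 2$ matrix $J=(\partial_{\dot x_j}a_i)_{i,j\le 2}$. The coefficient of $\ddot x_j$ in $\updelta^2x_i$ is $(J^2)_{ij}$. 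Order $0$ forces $J^2=0$, so $J$ is nilpotent.
\item Iterating $\updelta$ shows that the coefficients of third derivatives involve higher powers of $J$, consistent with nilpotency. So nilpotency alone is not enough, and commutativity must be used.
\item Differentiate \eqref{eq::commutativity_dotx3=dotx12dotx22} with respect to $\ddot x_1$ and $\ddot x_2$. Since $\dot a_i=\uptau a_i$ is affine in $\ddot x_1,\ddot x_2$, this gives
\[
\partial_{\dot x_j}a_3=2\dot x_1^2\dot x_2\,\partial_{\dot x_j}a_2+2\dot x_1\dot x_2^2\,\partial_{\dot x_j}a_1,\qquad j=1,2.
\]
\item The remaining $\ddot x$-free part is a polynomial identity in $\dot x_1,\dot x_2$ after substitution.
\end{itemize}

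\textbf{Main obstacle.} The hard step is combining the nilpotency of $J$ with these identities to force $J=0$. The first attempt is to differentiate the $\ddot x$-free part of \eqref{eq::commutativity_dotx3=dotx12dotx22} in $\dot x_1,\dot x_2$, mirroring the direct approach of Subsection~\ref{subsec::ex_nonflat}. If that stalls, the fallback uses the flow. Each $g_{x_i}(s)$ would be an order-$0$ diffiety morphism. Theorem~\ref{thm::rouchon_like} applied to its inverse (Lemma~\ref{lem::e_and_flow}) then shows that $x_i$ cannot depend on the control $\dot x_k$. Otherwise its $\uptau$-derivative would contain $\ddot x_k$, and the order-$0$ images of $x_1,x_2$ could not generate the diffiety.

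Once $J=0$, the derivative identity above gives $\partial_{\dot x_j}a_3=0$. Hence each $a_i$ depends only on $x_1,x_2,x_3$.
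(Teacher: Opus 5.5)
Your Step 1 is exactly the paper's proof: the same operator $D$, the same computation $D^4P\propto A_1^2A_2^2$, the same case split, then Corollary~\ref{cor::k_e}. It already proves the whole lemma. The problem is how you read its output. You take ``order $0$'' to allow dependence on the controls $\dot x_1,\dot x_2$, and then spend Step 2 trying to remove that dependence.

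In the paper's convention (Remark~\ref{rem::controls}, with $u_k=\dot x_k$ and coordinates $x_1,x_2,x_3,x_1^{(k)},x_2^{(k)}$), $\dot x_k$ has $\uptau$-order $1$. Theorem~\ref{thm::rouchon_like} covers exactly that case. Suppose some component $X_i$ of the flow morphism depends on $\dot y_{i_0}$, and take $k_0=1$. Then $\dot X_i$ is affine in $\ddot y_{i_0}$ with coefficient $\hat A_i=\partial_{\dot y_{i_0}}X_i$, which does not depend on $\ddot y_{i_0}$. The identity $\hat D^\ell P(X,\dot X)=\partial^\ell_{\ddot y_{i_0}}P(X,\dot X)=0$ then goes through verbatim and gives a nontrivial zero of $\mathcal{J}_D$. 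So once Step 1 shows $\mathcal{J}_D$ has only the trivial zero, Corollary~\ref{cor::k_e} gives $\ord_\uptau\updelta^k x_i=0$ in this sense. That means each $a_i=\updelta x_i$ is a function of $x_1,x_2,x_3$ alone. This is how the paper concludes, as it also does in Example~\ref{ex::2_2} and Subsection~\ref{subsec::ex_nonflat}.

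As written, your proposal does not close. Step 2 ends in an acknowledged obstacle, and the nilpotency of $J$ is, as you say yourself, not enough. Your ``fallback'' via the flow is just Theorem~\ref{thm::rouchon_like} with $k_0=1$, stated loosely. Delete Step 2 and replace the sentence ``So every $a_i$ is a function of $x_1,x_2,x_3,\dot x_1,\dot x_2$'' with the observation above. The proof is then complete and coincides with the paper's.
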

\begin{proof}
This is seen again through theorem~\ref{thm::rouchon_like} and corollary~\ref{cor::k_e}. Indeed consider the operator: 
$D = A_1 \partial_{\dot{x}_1} + A_2 \partial_{\dot{x}_2} + A_3 \partial_{\dot{x}_3}$. Applying four times to the system yields 
$$
\begin{array}{ccc}
A_3 & = & 2\dot{x}_1 \dot{x}_2^2 A_1 + 2\dot{x}_1^2 \dot{x}_2 A_2 \\ 
0 & = & 2\dot{x}_2^2 A_1^2 + 8 \dot{x}_1 \dot{x_2} A_1 A_2 + 2\dot{x}_1^2 A_2^2\\
0 & = & 4\dot{x}_2 A_1^2 A_2 + 8 \dot{x}_2 A_1^2 A_2 + 8 \dot{x}_1 A_1 A_2^2  + 4\dot{x}_1 A_1 A_2^2\\
0 & = & 4 A_1^2 A_2^2 + 8 A_1^2 A_2^2 + 8 A_1^2 A_2^2 + 4 A_1^2 A_2^2\\
\end{array}
$$

This clearly implies that $A_1 A_2 = 0$, so that either $A_1$ or $A_2$ vanishes. Assuming that $A_1 = 0$, we get: $A_3 = 2\dot{x}_1^2 \dot{x}_2 A_2$ and $2 \dot{x}_1^2 A_2^2 = 0$. The latter equation implies that $A_2 = 0$, whenever $\dot{x}_1 \neq 0$. Since we look for continuous solutions, this actually means that $A_2$ identically vanishes. Therefore the same conclusion holds for $A_3$. As the situation is symmetric in $A_1$ and $A_2$, we get that $D = 0$, and by corollary~\ref{cor::k_e}, the functions $a_i$ have all zero order.  
\end{proof}







Therefore from equation~\eqref{eq::commutativity_dotx3=dotx12dotx22} we get:
$\partial_{x_1} a_3 \dot{x}_1 + \partial_{x_2} a_3 \dot{x}_2 = 2 \dot{x}_1 \dot{x}_2^2 (\partial_{x_1} a_1 \dot{x}_1 + \partial_{x_2} a_1 \dot{x}_2 + \partial_{x_3} a_1 \dot{x}_1^2 \dot{x}_2^2) + 2 \dot{x}_1^2 \dot{x}_2 (\partial_{x_1} a_2 \dot{x}_1 + \partial_{x_2} a_2 \dot{x}_2 + \partial_{x_3} a_2 \dot{x}_1^2 \dot{x}_2^2)$.

Under this reduced form, we perform now the following steps:
\begin{enumerate}
\item Applying $\partial_{\dot{x}_1^4} \partial_{\dot{x}_2^3}$ and $\partial_{\dot{x}_1^3} \partial_{\dot{x}_2^4}$, we get: $\partial_{x_3} a_1 = \partial_{x_3} a_2 = 0$. 

\item Applying $\partial_{\dot{x}_1^3} \partial_{\dot{x}_2}$ leads to $\partial_{x_2} a_1 = 0$ and similarly applying $\partial_{\dot{x}_1} \partial_{\dot{x}_2^3}$ leads to $\partial_{x_1} a_2 = 0$. 

\item Then applying $\partial_{\dot{x}_1^2} \partial_{\dot{x}_1^2}$ yields $\partial_{x_3} a_3 = 2(\partial_{x_1} a_1 + \partial_{x_2} a_2)$.

\item Then applying $\partial_{\dot{x}_1}$ and $\partial_{\dot{x}_2}$, we get $\partial_{x_1} a_3 = \partial_{x_2} a_3 = 0$. 
\end{enumerate}

Therefore, we conclude that each of the functions $a_1,a_2,a_3$ depends on a single variable as follows: $a_1(x_1)$, $a_2(x_2)$ and $a_3(x_3)$. By equation $\partial_{x_3} a_3 = 2(\partial_{x_1} a_1 + \partial_{x_2} a_2)$, the coefficients are affine functions:
$$
a_i = \alpha_i x_i + \beta_i,
$$
with $\alpha_3 = 2(\alpha_1 + \alpha_2) $.

\subsection{A one parameter group of symmeties: $\dot{x_3} = x_1^2 + x_2^2 + \dot{x}_1^3 + \dot{x}_2^3 + \dot{x}_1^2 + \dot{x}_2^2$}

We see that the system does not depend on $x_3$, so the translations $x_3\mapsto x_3+\alpha\in\mathbb{R}$ are symmetries. We will show that they are in fact the only ones. We need first the following lemma, relyin on the Sluis -- Rouchon criterion~th.~\ref{thm::rouchon_like}.

\begin{lemma}
The functions $a_i$ depends on the state vector only.
\end{lemma}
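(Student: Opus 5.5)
We argue exactly as in the previous examples: we show that the ideal $\mathcal{J}_D$ of th.~\ref{thm::rouchon_like} has only the trivial solution, and then invoke cor.~\ref{cor::k_e}. Write the system with controls $u_1=\dot x_1$, $u_2=\dot x_2$ and
$$P:=\dot x_3-x_1^2-x_2^2-\dot x_1^3-\dot x_2^3-\dot x_1^2-\dot x_2^2,$$
and take $D=A_1\partial_{\dot x_1}+A_2\partial_{\dot x_2}+A_3\partial_{\dot x_3}$ with $DA_i=0$. We then apply $D$ successively to $P$.

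\textbf{Computing the powers of $D$.} The first application gives
$$DP=A_3-(3\dot x_1^2+2\dot x_1)A_1-(3\dot x_2^2+2\dot x_2)A_2=0 .$$
The second gives
$$D^2P=-(6\dot x_1+2)A_1^2-(6\dot x_2+2)A_2^2=0 .$$
The third gives
$$D^3P=-6(A_1^3+A_2^3)=0 .$$
The main point is to extract $A_1=A_2=0$ from the last two equations over the algebraic closure of the fraction field of $\mathcal{O}(\mathfrak{X})$.

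\textbf{Solving for the $A_i$.} From $D^3P=0$ we get $A_2^3=-A_1^3$, so $A_2=\omega A_1$ with $\omega^3=-1$. Substituting into $D^2P=0$ gives
$$A_1^2\bigl((6\dot x_1+2)+\omega^2(6\dot x_2+2)\bigr)=0 .$$
The bracket is a nonzero element of the field, since $\dot x_1$ and $\dot x_2$ are independent coordinates and $\omega$ is a constant. Hence $A_1=0$, so $A_2=0$, and then $A_3=0$ by $DP=0$. Thus $D\equiv 0$.

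\textbf{Conclusion.} By th.~\ref{thm::rouchon_like} and cor.~\ref{cor::k_e}, an integrable symmetry of iterated order $e>0$ would yield a nontrivial solution $\hat A$ with $\hat A_{i_0}\neq0$. This is impossible, so $e=0$. In particular $\ord\updelta x_i=0$ for all $i$, that is, each $a_i$ depends only on the state $x_1,x_2,x_3$.

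The only delicate step is the algebraic one above: a naive elimination could suggest non-trivial roots in the algebraic closure. To rule this out, one checks that the coefficient $(6\dot x_1+2)+\omega^2(6\dot x_2+2)$ is not identically zero for each of the three cube roots $\omega$ of $-1$. This holds because it is a nonconstant affine function of $\dot x_1$.
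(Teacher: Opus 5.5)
Your proof is correct and follows the paper's argument: you apply $D$ three times to obtain the same three equations, deduce $A_1=A_2=0$ and then $A_3=0$, and conclude with th.~\ref{thm::rouchon_like} and cor.~\ref{cor::k_e}. The one difference is that you treat all three cube roots $\omega$ of $-1$, whereas the paper uses only the real root $A_2=-A_1$; this is a small gain in rigor, since th.~\ref{thm::rouchon_like} is phrased over the algebraic closure.
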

\begin{proof}
Here we consider the operator $D = A_1 \partial_{\dot{x}_1} + A_2 \partial_{\dot{x}_2} + A_3 \partial_{\dot{x}_3}$. Applying this operator to the system three times successively yields the following equations:
$$
\begin{array}{ccl}
A_3 & = & A_1 (3 \dot{x}_1^2 + 2 \dot{x}_1) + A_2 (3 \dot{x}_2^2 + 2 \dot{x}_2) \\
0 & = & A_1^2 (6 \dot{x}_1 + 2) + A_2^2 (6 \dot{x}_2 + 2) \\
0 & = & 6 A_1^3 + 6 A_2^3
\end{array}
$$

From the last equation, we get $A_2 = - A_1$, which yields, after substitution into the second equation, $A_1^2(6 \dot{x}_1 + 2 + 6 \dot{x}_2 + 2) = 0$. Therefore both $A_1$ and $A_2$ vanish. The first equation then implies that $A_3$ vanishes too and that $D = 0$, meaning that the functions $a_1,a_2,a_3$ depend on the state vector only.  

\end{proof}

The Cartan field is given by:
$$
\tau = \dot{x}_1 \partial_{x_1} + \dot{x}_2 \partial_{x_2} + \color{black} (x_1^2 + x_2^2 + \dot{x}_1^2 + \dot{x}_1^3 + \dot{x}_2^2 + \dot{x}_2^3 ) \partial_{x_3} + \sum_{k=1}^2 \sum_{\ell \geq 1} \dot{x}_k^{(\ell+1)} \partial_{\dot{x}_k^{(\ell)}}
$$

Let us consider the vector field $\updelta = a_3 \partial_{x_3} + \sum_{k=1}^2 \sum_{\ell \in\mathbb{N}} a_{k}^{(\ell)} \partial_{x_k^{(\ell)}}$. Then according to lemma~\ref{lem::commutativity} the vector field $\updelta$ commutes with $\tau$ if and only if:
\begin{equation}
\label{eq::ex3_com}
2 {x}_1 a_1 + 2 {x}_2 a_2 + \tau a_1 (3\dot{x}_1^2 +  2\dot{x}_1) + \tau a_2 (3 \dot{x}_2^2  \color{black} + 2 \dot{x}_2)  =  \tau a_3.     
\end{equation}

Equation~\eqref{eq::ex3_com} becomes 
$$
\begin{array}{l}
2 {x}_1 a_1 + 2 {x}_2 a_2 + (\dot{x}_1 \partial_{x_1}a_1 + \dot{x}_2 \partial_{x_2}a_1 + (x_1^2 + x_2^2 + \dot{x}_1^2 + \dot{x}_1^3 + \dot{x}_2^2 + \dot{x}_2^3 ) \partial_{x_3}a_1) (3\dot{x}_1^2 +  2\dot{x}_1) \\ 
+ (\dot{x}_1 \partial_{x_1}a_2 + \dot{x}_2 \partial_{x_2}a_2 + (x_1^2 + x_2^2 + \dot{x}_1^2 + \dot{x}_1^3 + \dot{x}_2^2 + \dot{x}_2^3 ) \partial_{x_3}a_2) (3\dot{x}_2^2 +  2\dot{x}_2)  \\ 
\hspace{0.5cm} =  \dot{x}_1 \partial_{x_1}a_3 + \dot{x}_2 \partial_{x_2}a_3 + (x_1^2 + x_2^2 + \dot{x}_1^2 + \dot{x}_1^3 + \dot{x}_2^2 + \dot{x}_2^3 ) \partial_{x_3}a_3     
\end{array}
$$

We proceed to the following computations:
\begin{enumerate}
\item Applying $\partial_{\dot{x}_1}^5$, we get: $\partial_{x_3}a_1 = 0$.

\item Applying $\partial_{\dot{x}_2}^5$, we get: $\partial_{x_3}a_2 = 0$.

\item Then applying $\partial_{\dot{x}_1}^3$ leads to $3 \partial_{x_1} a_1 = \partial_{x_3} a_3$,

\item Similarly applying $\partial_{\dot{x}_2}^3$ leads to $3 \partial_{x_2} a_2 = \partial_{x_3} a_3$.

\item Furthermore applying $\partial_{\dot{x}_2} \partial_{\dot{x}_1}^2$ yields $\partial_{x_2} a_1 = 0$,

\item Finally applying $\partial_{\dot{x}_1} \partial_{\dot{x}_2}^2$ yields $\partial_{x_1} a_2 = 0$.

\item Then applying $\partial_{\dot{x}_1}^2$ leads to $ \partial_{x_1} a_1 = \partial_{x_1} a_1=\partial_{x_1}a_3=0 $,

\item Similarly applying $\partial_{\dot{x}_2}^2$ leads to $ \partial_{x_2} a_2 = \partial_{x_2} a_3=0$.

\item Then applying $\partial_{{x}_1}$ leads to $ a_1 = 0 $,

\item Similarly applying $\partial_{{x}_2}$ leads to $ a_2 = 0 $.
\end{enumerate}

We then conclude that the functions have the following structure $a_1=0,a_2=0, a_3=\alpha\in\mathbb{R}$. So the group of translation : $x_3\mapsto x_3+\alpha$ is the only connected group of components. The only bigger group is obtained by adding the symmetry: $(x_1,x_2,x_3)\mapsto(x_2,x_1,x_3)$.

\subsection{A flat example, the Rouchon system: $\dot{x_3} = \dot{x_1} \dot{x_2}$}



The Cartan field is given by:
$$
\uptau = \dot{x}_1 \partial_{x_1} + \dot{x}_2 \partial_{x_2} + \dot{x}_1 \dot{x}_2   \partial_{x_3} + \sum_{k=1}^2 \sum_{\ell \geq 1} x_k^{(\ell+1)} \partial_{x_k^{(\ell)}}
 $$

The vector field $\updelta = \sum_i a_i \partial_{x_i} + \sum_{i=1}^2 \sum_{\ell \geq 1} a_{i}^{(\ell)} \partial_{x_i^{(\ell)}}$ commutes with $\uptau$ if and only if we have:

\begin{equation}
\label{eq::com_x_3=x_1 x_2}
\uptau a_3 = \dot{x}_2 \uptau a_1 + \dot{x}_1 \uptau a_2.
\end{equation}

Applying theorem~\ref{thm::rouchon_like} and corollary~\ref{cor::k_e}, we can determine the order of the functions $a_i$ with respect to $x_1,x_2$. Notice that the derivative of $x_3$ is actually a function of the derivatives of the other variables and the state.

\subsubsection{Solutions depending on the state variables only}
\label{sec::rouchon_syst_state_only}
In this section, we will compute the solutions depending on the state variables only. 

Equation~\eqref{eq::com_x_3=x_1 x_2} reads:
$$
\dot{x}_1 \partial_{x_1} a_3  + \dot{x}_2 \partial_{x_2} a_3  + \dot{x}_1 \dot{x}_2 \partial_{x_3} a_3   = \dot{x}_2 (\dot{x}_1 \partial_{x_1} a_1  + \dot{x}_2 \partial_{x_2} a1  + \dot{x}_1 \dot{x}_2 \partial_{x_3} a_1 ) + \dot{x}_1 (\dot{x}_1 \partial_{x_1} a_2  + \dot{x}_2 \partial_{x_2} a_2  + \dot{x}_1 \dot{x}_2 \partial_{x_3} a_2) 
$$

We proceed to the following sequence of computations:
\begin{enumerate}
    \item Applying $\partial_{\dot{x}_2}^2 \partial_{\dot{x}_1}$, we get: $0 = \partial_{x_3} a_1$.  Similarly applying $\partial_{\dot{x}_2} \partial_{\dot{x}_1}^2$, we get: $0 = \partial_{x_3} a_1$. Therefore the equation becomes:
$$
\partial_{x_1} a_3 \dot{x}_1 + \partial_{x_2} a_3 \dot{x}_2 + \partial_{x_3} a_3 \dot{x}_1 \dot{x}_2 = \dot{x}_2 ( \partial_{x_1} a_1 \dot{x}_1 + \partial_{x_2} a_1 \dot{x}_2 ) + \dot{x}_1 ( \partial_{x_1} a_2 \dot{x}_1 + \partial_{x_2} a_2 \dot{x}_2 ) 
$$
\item Applying $\partial_{\dot{x}_1}^2$ and $\partial_{\dot{x}_2}^2$, we get: $\partial_{x_1} a_2 = 0 = \partial_{x_2} a_1$. Therefore we have:
$$
\partial_{x_1} a_3 \dot{x}_1 + \partial_{x_2} a_3 \dot{x}_2 + \partial_{x_3} a_3 \dot{x}_1 \dot{x}_2 = \dot{x}_2 ( \partial_{x_1} a_1 \dot{x}_1 ) + \dot{x}_1 (\partial_{x_2} a_2 \dot{x}_2 ) 
$$

\item Applying $\partial_{\dot{x}_1} \partial_{\dot{x}_2}$, we get: $\partial_{x_3} a_3 = \partial_{x_1} a_1 + \partial_{x_2} a_2$. Therefore we get: $\partial_{x_1} a_3 \dot{x}_1 + \partial_{x_2} a_3 \dot{x}_2 = 0$. Applying $\partial_{\dot{x}_1}$ and then $\partial_{\dot{x}_2}$, we finally get: $\partial_{x_1} a_3 = \partial_{x_2} a_3 = 0$. 
\end{enumerate}

Therefore we have the dependency on the state variables turns out to be quite reduced, since each function depends on a single variable, as follows: $a_1(x_1), a_3(x_2), a_3(x_3)$. Since $\partial_{x_3} a_3 = \partial_{x_1} a_1 + \partial_{x_2} a_2$, we conclude that:
$$
a_i = \alpha_i x_i + \beta_i\> \text{with}\> \alpha_3=\alpha_1+\alpha_2.
$$

Each of these vector fields is complete and defines a global group of automorphisms. We have then a group of symmetries that is parametrized by $\R^5$ and corresponds to $(x_1, x_2, x_2)\mapsto (\alpha_1x_1+\beta_1,\alpha_2x_2+\beta_2,(\alpha_1+\alpha_2)x_3+\beta_3)$. 

The Lie bracket of these such fields is still a field of the same form. Therefore this family of fields does not generate higher order groups of automorphisms.

\subsubsection{Solutions depending on the state variables and their first order derivatives}

\begin{lemma}
\label{lemma::one_is_smaller}
Let $\updelta$ be an integrable symmetry. It is of iterated order $1$ iff one of the functions $a_1$ or $a_2$, say $a_{2}$, only depends on $x_2$, whereas $a_1$ and $a_3$ only depends on the state functions and $\dot x_2$.
\end{lemma}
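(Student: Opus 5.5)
We work with the Rouchon system $\dot x_3=\dot x_1\dot x_2$ and use $x_1,x_2$ together with their $\uptau$-derivatives and $x_3$ as coordinates. The lemma has two directions, and the main tool for the necessary direction is th.~\ref{thm::rouchon_like} through cor.~\ref{cor::k_e}.

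\emph{Solutions of the ideal $\mathcal{J}_D$.} We take $D=A_1\partial_{\dot x_1}+A_2\partial_{\dot x_2}+A_3\partial_{\dot x_3}$. Applying $D$ once and twice to $\dot x_3-\dot x_1\dot x_2$ gives
\[
A_3=\dot x_2A_1+\dot x_1A_2, \qquad A_1A_2=0 .
\]
So every nontrivial solution has either $A_2=0$, $A_1\neq0$, $A_3=\dot x_2A_1$, or the symmetric case with the indices $1$ and $2$ exchanged.

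\emph{Necessity.} Assume $\updelta$ is integrable with iterated order $e=1$. Then every $\updelta^kx_i$ has order at most $1$, so each $a_i$ depends only on the state and on $\dot x_1,\dot x_2$. Cor.~\ref{cor::k_e} provides, for each $i_0$ realizing $e$, a solution with $\hat A_{i_0}\neq0$, namely $\hat A_i=\partial X_i/\partial y_{i_0}^{(1)}$, computed from the flow $g_{x_i}(s)$. Since $A_1A_2=0$, the orders of $g_{x_1}(s)$ and $g_{x_2}(s)$ cannot both be $1$ in the same derivative. I expect this to refine to the following: if, say, $\ord g_{x_1}(s)=1$, then only one first-order derivative appears in all the $g_{x_i}$, and it is $\dot x_2$.

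To pass from $g$ to $\updelta$ we differentiate in $s$ at $s=0$. This shows that $\partial_{\dot x_1}a_i$ vanishes for all $i$, while $a_2$ has order $0$. We then plug $a_i=a_i(x,\dot x_2)$ into the commutation relation \eqref{eq::com_x_3=x_1 x_2}, namely $\uptau a_3=\dot x_2\uptau a_1+\dot x_1\uptau a_2$. The only second-order term is $\ddot x_2$, whose coefficient is $\partial_{\dot x_2}a_3-\dot x_2\partial_{\dot x_2}a_1$. It must vanish. Isolating the powers of $\dot x_1$ should then force $\partial_{x_1}a_2=\partial_{x_3}a_2=0$, so $a_2=a_2(x_2)$. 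In particular $\updelta^kx_2$ has order $0$ for all $k$.

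\emph{Sufficiency.} Assume $a_2=a_2(x_2)$ and that $a_1,a_3$ depend only on $(x,\dot x_2)$. Then $\updelta x_2$ and $\updelta\dot x_2=\uptau a_2=a_2'(x_2)\dot x_2$ are functions of $(x_2,\dot x_2)$. The pair $(x_2,\dot x_2)$ is therefore $\updelta$-stable, and $\updelta x_1,\updelta x_3$ lie in the finite set of functions of $x_1,x_2,x_3,\dot x_2$. Hence every $\updelta^kx_i$ stays in the ring of functions of $x,\dot x_2$, so the iterated order is at most $1$ and $\updelta$ is integrable (def.~\ref{def::integrability}). This is the tame structure of prop.~\ref{prop::structure}, with $\{x_2\}$ below $\{x_1,x_3\}$. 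The iterated order equals exactly $1$ as soon as some $a_i$ actually depends on $\dot x_2$; otherwise we are in the order-$0$ case of \S\ref{sec::rouchon_syst_state_only}, so "iterated order $1$" should be read as "at most $1$".

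\emph{Main obstacle.} The hard step is the transfer, in the necessary direction, from the algebraic information on $\mathcal{J}_D$ (a single nonzero $\hat A_{i_0}$) to the precise claim that no $a_i$ depends on $\dot x_1$. Cor.~\ref{cor::k_e} gives a solution for each index realizing $e$, but it does not forbid a priori that $\dot x_1$ appears in lower-order roles. I would handle this directly instead. Take $a_i$ of order at most $1$ and compute $\updelta\dot x_1=\uptau a_1$ and $\updelta\dot x_2=\uptau a_2$; these involve $\ddot x_1$ and $\ddot x_2$ with coefficients $\partial_{\dot x_j}a_i$. The condition that $\updelta^2x_i$ still has order at most $1$ then gives the quadratic relations $\partial_{\dot x_1}a_i\,\partial_{\dot x_1}a_1+\partial_{\dot x_2}a_i\,\partial_{\dot x_1}a_2=0$, and the analogous ones with $\partial_{\dot x_2}$. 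These say that the Jacobian block $(\partial_{\dot x_j}a_i)_{i,j\le2}$ is nilpotent. Combined with the constraint coming from the $\ddot x$ terms in \eqref{eq::com_x_3=x_1 x_2}, nilpotency should force rank at most $1$ with a common kernel. After relabelling, this yields exactly the stated dependence pattern.
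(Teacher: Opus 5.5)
Your sufficiency direction is correct (the paper does not treat it), and so is your closing step. Once $\partial_{\dot x_1}a_i=0$ for all $i$ and $\ord a_2=0$, the $\dot x_1^2$-coefficient of $\uptau a_3=\dot x_2\uptau a_1+\dot x_1\uptau a_2$ gives $\partial_{x_1}a_2+\dot x_2\partial_{x_3}a_2=0$. This is cleaner than the paper's route through $\partial_{\dot x_2}\updelta^2x_2$ and does not need $\partial_{\dot x_2}a_1\neq0$.

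The gap is in the necessity direction, in getting to that point. Your relation $J^2=0$ for $J=(\partial_{\dot x_j}a_i)_{i,j\le2}$ is right. But the $\ddot x$-terms of the commutation relation are linear and only give $(\partial_{\dot x_1}a_3,\partial_{\dot x_2}a_3)=(\dot x_2,\dot x_1)J$. Your condition for $i=3$ then reads $(\dot x_2,\dot x_1)J^2=0$, which holds automatically, so these terms impose nothing on $J$. Nilpotency alone allows a point-dependent, non-coordinate kernel. For example, $J=\lambda\begin{pmatrix}\alpha\beta&-\alpha^2\\ \beta^2&-\alpha\beta\end{pmatrix}$ with $\alpha\beta\neq0$ satisfies $J^2=0$ and all the linear $\ddot x$-constraints, yet both $a_1$ and $a_2$ depend on $\dot x_1$ and $\dot x_2$.

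The flow route does not close this either. Th.~\ref{thm::rouchon_like} applied to $\mathfrak{g}_U(s)$ handles one derivative $\dot x_j$ at a time and yields $\partial_{\dot x_j}g_{x_1}(s)\,\partial_{\dot x_j}g_{x_2}(s)=0$. Differentiating in $s$ at $s=0$, the first derivative is trivially $0$ and the second gives only $J_{1j}J_{2j}=0$, not the vanishing of a whole column. So the ``refinement'' you expect, and hence $\partial_{\dot x_1}a_i=0$ and $\ord a_2=0$, is unproved.

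What is missing is the quadratic information behind $A_1A_2=0$, applied to $\updelta$ itself. Apply $\updelta$ to the commutation relation and use $[\updelta,\uptau]=0$: $\uptau\updelta a_3-\dot x_2\,\uptau\updelta a_1-\dot x_1\,\uptau\updelta a_2=2\,\uptau a_1\,\uptau a_2$.
\begin{itemize}
\item Since the iterated order is $1$, every $\updelta a_i$ has order at most $1$, so the left-hand side is affine in $(\ddot x_1,\ddot x_2)$.
\item The right-hand side contains the quadratic term $2(J_{11}\ddot x_1+J_{12}\ddot x_2)(J_{21}\ddot x_1+J_{22}\ddot x_2)$, which must therefore vanish.
\item A product of two linear forms vanishes only if one factor does. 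By analyticity on a connected open set, one row of $J$ vanishes identically, say the second, so $\ord a_2=0$.
\item Then $(J^2)_{11}=J_{11}^2=0$, so $a_1$ involves only $\dot x_2$.
\item The $\ddot x_1$-coefficient of the commutation relation gives $\partial_{\dot x_1}a_3=\dot x_2J_{11}+\dot x_1J_{21}=0$.
\end{itemize}
Alternatively, the column conditions $J_{1j}J_{2j}=0$ that your flow argument does yield, combined with $J^2=0$, force $J_{11}=J_{22}=0$ and $J_{12}J_{21}=0$ pointwise. With either argument inserted, your proof closes.

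This replaces the paper's route, which takes a different path. It obtains $\ord\updelta^kx_2=0$ from cor.~\ref{cor::k_e}. It then rules out $\dot x_1$ in $a_1$ by the growth $\ord\updelta^kx_1=k+1$, and in $a_3$ by taking $\dot x_1,\dot x_3$ as controls.
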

\begin{proof}
Indeed consider the operator $D = A_1 \partial_{\dot{x}_1} + A_2 \partial_{\dot{x}_2} + A_3 \partial_{\dot{x}_3}$. Applying successively $D$ to the equation $\dot{x}_3 = \dot{x}_1 \dot{x}_2$ defining the system yields:
$$
\begin{array}{ccl}
A_3 & = & A_1 \dot{x}_2 + A_2 \dot{x}_1 \\
0 & = & 2 A_1 A_2
\end{array}
$$

Therefore either $A_1$ or $A_2$ vanishes, say $A_2=0$, which implies by corollary~\ref{cor::k_e}, that $\ord \updelta^k x_2 = 0$, for all $k\in\mathbb{N}$.

If the effective order of $\updelta$ is $1$, then $\ord a_1=1$. Assume that $a_1$ depends on $\dot x_1$, then $\ord \updelta^k x_1=k+1$, so that $\updelta$ would not be integrable. This implies that $a_1$ only depends on the state variables and $\dot x_2$. We may take $\dot x_3$ and $\dot x_1$ as controls and conclude in the same way that $a_3$ only depends on the state variables and $\dot x_2$.

As the effective order of $x_2$ is $0$, we have $\partial_{\dot x_2}\updelta^2 x_2=\partial_{\dot x_2}\updelta a_2= \partial_{\dot x_2}a_1\partial_{x_1}a_2+\partial_{\dot x_2}a_3\partial_{x_3}a_2=0$. We also have $\partial_{\dot x_2}a_1=\partial_{\ddot x_2}\dot a_1$ and $\partial_{\dot x_2}a_1=\partial_{\ddot x_2}\dot a_3=\partial_{\ddot x_2}\dot x_2 a_1$ using eq.~\eqref{eq::com_x_3=x_1 x_2}, so that 
$\partial_{\ddot x_2}\dot a_1(\partial_{x_1}a_2+\dot x_2\partial_{x_3}a_2)=0$. We assume that $a_1$ is of order $1$ and so $\partial_{\dot x_2}a_1=\partial_{\ddot x_2}\dot a_1\neq0$, which implies $\partial_{x_1}a_2+\dot x_2\partial_{x_3}a_2=0$. Applying $\partial_{\dot x_2}$, we get $\partial_{x_3}a_2=0$, and so $\partial_{x_1}a_2=0$.
\end{proof}

Using the previous lemma the following dependencies are given $a_1(x_1,x_2,x_3,\dot{x}_2)$, $a_2(x_2)$, $a_3(x_1,x_2,x_3,\dot{x}_2)$. Applying to equation~\eqref{eq::com_x_3=x_1 x_2} a sequence of computations, we can further simplify these dependencies. 

First observe that in that case, equation~\eqref{eq::com_x_3=x_1 x_2} reads:
$$
\begin{array}{l}
\partial_{x_1} a_3 \dot{x}_1 + \partial_{x_2} a_3 \dot{x}_2 + \partial_{x_3} a_3 \dot{x}_1 \dot{x}_2 + \partial_{\dot{x}_1} a_3 \ddot{x}_1 + \partial_{\dot{x}_2} a_3 \ddot{x}_2 + \partial_{\dot{x}_2} a_3 \ddot{x}_2 = \\  
\quad \dot{x}_2 ( \partial_{x_1} a_1 \dot{x}_1 + \partial_{x_2} a_1 \dot{x}_2 + \partial_{x_3} a_1 \dot{x}_1 \dot{x}_2 +  \partial_{\dot{x}_1} a_1 \ddot{x}_1 + \partial_{\dot{x}_2} a_1 \ddot{x}_2) \\
\quad + \dot{x}_1 ( \partial_{x_1} a_2 \dot{x}_1 + \partial_{x_2} a_2 \dot{x}_2 + \partial_{x_3} a_2 \dot{x}_1 \dot{x}_2 + \partial_{\dot{x}_1} a_2 \ddot{x}_1 + \partial_{\dot{x}_2} a_2 \ddot{x}_2) 
\end{array}
$$

Following the remarks above this equation can be simplified into: 
\begin{equation}\label{eq::RA}
    \partial_{x_1} a_3 \dot{x}_1 + \partial_{x_2} a_3 \dot{x}_2 + \partial_{x_3} a_3 \dot{x}_1 \dot{x}_2 + \partial_{\dot{x}_2} a_3 \ddot{x}_2 =  
\dot{x}_2 ( \partial_{x_1} a_1 \dot{x}_1 + \partial_{x_2} a_1 \dot{x}_2 + \partial_{x_3} a_1 \dot{x}_1 \dot{x}_2 + \partial_{\dot{x}_2} a_1 \ddot{x}_2)
+ \dot{x}_1\dot{x}_2 \partial_{x_2}a_2
\end{equation}

Now we proceed to the following sequence of computations:
\begin{enumerate}
    \item Applying $\partial_{\dot{x}_1}$ leads to: 
\begin{equation}\label{eq::RB}
\partial_{x_1} a_3 + \partial_{x_3} a_3 \dot{x}_2 = \partial_{x_1} a_1 \dot{x}_2 + \partial_{x_3} a_1 \dot{x}_2^2 + \partial_{x_2} a_2 \dot{x}_2.
\end{equation}

    \item Now applying $\partial_{\ddot{x}_2}$ to \eqref{eq::RA} yields: 
    \begin{equation}\label{eq::RC}
        \partial_{\dot{x}_2} a_3 = \partial_{\dot{x}_2} a_1 \dot{x}_2,
    \end{equation} which can be multiplied by $\ddot{x}_2$ and subtracted from \eqref{eq::RA} together with \eqref{eq::RB}, leading to:
    $\partial_{x_2} a_3 \dot{x}_2 = \partial_{x_2} a_1 \dot{x}_2^2$, that we may divide by $\dot{x}_2$ to get $\partial_{x_2} a_3 = \partial_{x_2} a_1 \dot{x}_2$. So we have now the following system:
    $$
    \left \{ \begin{array}{rcl}
    \partial_{x_1} a_3  & = &  -\partial_{x_3} a_3 \dot{x}_2 + \partial_{x_1} a_1 \dot{x}_2 +  \partial_{x_3} a_1 \dot{x}_2^2 + \partial_{x_2} a_2 \dot{x}_2\quad (E_1) \\
    \partial_{x_2} a_3 & = & \partial_{x_2} a_1 \dot{x}_2\quad (E_2) \\
    \partial_{\dot{x}_2} a_3 & = & \partial_{\dot{x}_2} a_1 \dot{x}_2\quad (E_3).
    \end{array} \right .
    $$

At this stage, it becomes difficult to keep going with unformal computation. The best, as these equations are linear in the derivatives of the $a_i$ is to use Gröbner bases of $D$-modules\par (see subsec.~\ref{subsubsec::GB}) over the module $\mathcal{F}[\partial_{x_i}, 1\le i\le 3, \partial_{\dot x_2}]$, where $\mathcal{F}$ is the differential field extension defined by the Rouchon system (see \textit{e.g.} \cite{Boulier2009}). We choose an ordering that compares first derivation operators, using first total order and then lexicographic order with $\partial_{x_i}>\partial_{x_j}$ if $i<j$. Then, we take derivaites of $a_i$ greater than derivatives of $a_j$ if $i>j$. This is coherent with the main derivatives appearing in the left parts of $(E_1), (E_2), (E_3)$.

    \item Applying $\partial_{x_2}$ and $\partial_{x_1}$ to respectively the first and second equation, and then subtracting the results, \textit{i.e.} computing the $S$-polynomial of $(E_1)$ and $(E_2)$ we get (after simplification):
    $$
    \partial_{x_2} \partial_{x_3} a_3 = \partial_{x_2} \partial_{x_3} a_1 \dot{x}_2 + \partial_{x_2}^2 a_2.
    $$
    Since $\partial_{x_2} a_3 =\partial_{x_2} a_1 \dot{x}_2$, we get a further simplification 
    $$
    \partial_{x_2}^2 a_2 = 0\quad (E_4).
    $$ 

    Applying $\partial_{\dot{x}_2}$ to $(E_1)$, $\partial_{x_1}$ to $(E_3)$ and computing the difference, we get the $S$-polynomial of those two equations that simplifies to: 
    $$
    \partial_{x_3} a_3 =  \partial_{x_1} a_1 + 2 \partial_{x_3} a_1 \dot{x}_2 + \partial_{x_2}a_2\quad (E_5)
    $$

    \item The reduction of the $S$-polynomial of $(E_5)$ and $(E_3)$ provides:
    $$
    \partial_{x_1} \partial_{\dot{x}_2} a_1 = - \partial_{x_3} \partial_{\dot{x}_2} a_1 \dot{x}_2 - 2 \partial_{x_3} a_1.\quad (E_6).
    $$

    Continuing in this way, the reduction of the $S$-polynomial of $(E_2)$ and $E_3)$ gives
    $$
    \partial_{x_2}a_1=0\quad (E_7);
    $$
the reduction of the $S$-polynomial of $(E_1)$ and $(E_2)$ gives
  $$
    \partial_{x_1}\partial_{x_3}a_1=0\quad (E_8);
    $$  
the reduction of the $S$-polynomial of $(E_5)$ and $(E_3)$ gives
  $$
    \partial_{x_1}\partial_{\dot x_2}a_1=-\dot x_2\partial_{x_3}\partial_{\dot x_2}a_1-2\partial_{x_3}a_1\quad (E_9);
    $$      
the reduction of the $S$-polynomial of $(E_8)$ and $(E_9)$ gives
  $$
    \dot x_2\partial_{x_3}^2\partial_{\dot x_2}a_1=\partial_{x_3}^2a_1\quad (E_{10});
    $$      
 the reduction of the $S$-polynomial of $(E_9)$ and $(E_{10})$ gives
  $$
    \partial_{x_3}^2\partial_{x_1}a_1=0\quad (E_{11}).
    $$    
All further $S$-polynomials are reduced to $0$, so that the $E_i$, $1\le i\le 10$ form a Gröbner basis $G$. One needs to solve that system to get integrable symmetries. Choosing arbitrary initial values in $\mathrm{Stair}(G)$ defines analytic solutions (see~\cite{Riquier1928}). In this case, many polynomial solutions exist for the $a_i$, for example, $a_1=\dot x_2$, $a_2=y$ and $a_3=z+\dot x_2^2/2$.

We see that computations are more demanding when the order increases and could necessitate the use of computer algebra. A study of complexity remains to be done. One may expect an exponential growth or even worse if one cannot possibly use a special structure of such systems. In the worse case, the complexity of testing if a linear PDE system admits solution is doubly exponential, as shown by Sadik~\cite{Sadik1995}.

\end{enumerate}

\section{Local classification of diffieties}\label{sec::classification}

\subsection{Classical Galois theory}

For an elementary introduction to differential Galois theory of linear differential equations, one may refer to Kaplansky~\cite[chap.~V]{Kaplansky1957} or Stewart~\cite[chap.~19]{Stewart1973}. 

The basic idea is to consider the \emph{Galois group} of a differential field extension $\mathcal{G}/\mathcal{F}$, which is the group of differential field automorphisms of $\mathcal{G}$ that leave $\mathcal{F}$ invariant. Then, the group must be solvable if the extension is Liouville, meaning that it may be obtained by a succession of elementary extensions corresponding to integrals or exponentials of integrals~\cite[th.~p.~207]{Stewart1973}. 

There exists a generalization due to Malgrange to non linear differential systems, using pseudogroups. See Casale~\cite{Casale2011}.

We will be here more concerned with the approach of Chelouah and Chitour~\cite{Chelouah2023}. They consider the control of rolling bodies and show that the system is not flat but corresponds, under some symmetry hypothesis, to a Liouville extension of a flat diffiety, that is a diffiety defined by a sequence of extensions $\mathfrak{X}_{i+1, \tau_{i+1}}/\mathfrak{X}_{i,\tau_{i}}$ with 
$$
\text{i) } \tau_{i+1}=\alpha_{i+1}\partial_{\xi_{i+1}}+\tau_i \text{ (integral case)\phantom{exponential of an $\xi_{i+1}$}}
$$
or 
$$
\text{ii) } \tau_{i+1}=\alpha_{i+1}\xi_{i+1}\partial_{\xi_{i+1}}+\tau_i \text{ (exponential of an integral case)},
$$
where $\mathfrak{X}_{i+1}$ is $\mathfrak{X}_{i+1}\times\mathbb{R}$ with coordinate function $\xi_{i+1}$ and $\alpha_{i+1}$ is a function of $\mathcal{O}(\mathfrak{X}_{i})$.

Considering algebraic systems, we may consider the Galois approach and ours, the may difference being that in Galois theory one restricts to morphisms that leave functions in a given subfield invariant, as we restrict to morphims that involve functions of a finite number of coordinates, which is always granted in classical differential Galois theory where extensions have a finite algebraic transcendance degree.

One may notice that, in many of our examples, the pseudogroups are solvable. In subsec.\ref{subsec::ex_nonflat}, it corresponds to homotheties or translations. A more complete study of the relations between these two standpoints remains to be done.

\subsection{Diffieties that are not strongly accessible}
\label{subsec::not_stringly_access}
We consider here the case of diffieties that are not strongly accessible.
Then, using prop.~\ref{prop::accessibility}, the Lie algebra $L$ generated by the $\hat\uptau^k\partial_{u_{j}}$, $1\le j\le m$, $k\in\mathbb{N}$ has dimension $n+m-d$, with $d>0$. Then we may choose new state functions $y_1, \ldots, y_{n+m}$, such that $y_1, \ldots, y_d$ are $d$ functionally independent solutions of $\delta Y(x)=0$, for $\delta\in L$. We have then $\uptau y_i=g_i(x_j|1\le j\le d)$, for $1\le i\le d$ and any vector field $\updelta=\sum_{i=1}^d a_i(y_1, \ldots, y_d)\partial y_i$, such that 
\begin{equation}\label{eq::sym_torsion}
    \sum_{j=1}^d g_j\partial_{x_j}a_i= \sum_{j=1}^d a_j\partial_{x_j}g_i
\end{equation}
is an integrable symmetry of the diffiety. In the case $\updelta x_i=0$, for all $1\le i\le d$, all vector field in this subspace is a symmetry. In the general case, using the rectification theorem, around any point in a dense open set, we may assume that $g_1=1$ and $g_i=0$, for $2\le i\le d$, so that \eqref{eq::sym_torsion} is equivalent to
\begin{equation}\label{eq::sym_torsion2}
    \partial_{x_1}a_1= 0.
\end{equation}
So, for diffieties of finite dimension, the set of integrable infinitesimal symmetries is not reduced to $\{0\}$. On may consider the restriction of the Cartan field to the subdiffiety defined by $y_i=c_i\in\mathbb{R}$, for $1\le i\le d$, that is a leaf of the foliation defined by $L$, with $\dim L=n-d$. For a choice of the $c_i$ in a dense open set and the system becomes strongly accessible, according to prop.~\ref{prop::accessibility}.

We see that a diffiety that is not strongly accessible has a non trivial set of integrable infinitesimal symmetries. A special case is that of diffieties of finite dimension, \textit{i.e.} with no control.

\subsection{Using many integrable symmetries}

If a set  $\Delta$ of two or more distinct compatible and integrable vector fields have been computed, one can consider the pseudogroup generated by the one parameter pseudogroups associated to each elements of $\Delta$. We will limit ourselves here to stating the following result.

\begin{theorem}\label{th::many_deltas}
    i) If the elements of $\Delta$ commute, then their linear combinations are integrable and they generate a $\mu:=\sharp\Delta$-parameter pseudogroup.

    ii) If $\updelta_1$ and $\updelta_2$ are integrable and $[\updelta_1,\updelta_2]$ is not, then let $\mathcal{M}$ be the (non commutative monoid) generated by $\updelta_1$ and $\updelta_2$.  We have: $\mathrm{dim}\mathcal{M}\{x_1, \ldots, x_n\}=\infty$. The pseudogroup generated by $\mathfrak{G}_{\updelta_1}$ and $\mathfrak{G}_{\updelta_2}$ is also infinite dimensional.
\end{theorem}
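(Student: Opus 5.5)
For i), I will show that a linear combination $\updelta=\sum_{j=1}^{\mu}c_j\updelta_j$ of pairwise commuting integrable symmetries is again integrable. Then I will assemble the flows into a $\mu$-parameter pseudogroup.

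For integrability, I use the finite-dimensional spaces of lem.~\ref{lem::integrability}. Since the $\updelta_j$ commute, the monoid they generate is commutative, so every iterate $\updelta^k x_i$ is a linear combination of monomials $\updelta_1^{k_1}\cdots\updelta_\mu^{k_\mu}x_i$. I will prove by induction on $\mu$ that the set $\{\updelta_1^{k_1}\cdots\updelta_\mu^{k_\mu}x_i\}$ has bounded $\uptau$-order. For $\mu=1$ this is the definition of integrability. For the induction step I need $\ord_\uptau\updelta_\mu(F)$ to be bounded uniformly for $F$ in the finite-dimensional functional span produced by $\updelta_1,\dots,\updelta_{\mu-1}$. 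Here commutation is the key: $\updelta_\mu$ commutes with each $\updelta_j$, so it maps $\updelta_1^{k_1}\cdots\updelta_{\mu-1}^{k_{\mu-1}}x_i$ to $\updelta_1^{k_1}\cdots\updelta_{\mu-1}^{k_{\mu-1}}(\updelta_\mu x_i)$. Applying lem.~\ref{lem::integrability}~ii) to $F=\updelta_\mu^{k}x_i$, which has bounded order, then gives the bound. The hard part is showing that the order of the functions obtained by applying $\updelta_1,\dots,\updelta_{\mu-1}$ to these bounded-order functions stays bounded. I will handle this through the characteristic-set picture. Treat $\Delta\cup\{\uptau\}$ as a commuting family, and choose an ordering that takes all $\updelta_j$ above $\uptau$, generalizing th.~\ref{th::tunnel}. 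Integrability of each $\updelta_j$ gives a relation $\updelta_j^{r}x_i=\phi(\dots)$ in the $\updelta_j$-direction, so the stair is contained in finitely many $\uptau$-columns. It follows that the whole $\Delta$-orbit of the $x_i$ lies in a space of finitely many functions of bounded order, and in particular so do the $\updelta^k x_i$.

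For the pseudogroup in i), I define $\mathfrak{g}_U(s_1,\dots,s_\mu)=\mathfrak{g}_{\updelta_1}(s_1)\circ\cdots\circ\mathfrak{g}_{\updelta_\mu}(s_\mu)$ on suitable open sets. Commutation of the vector fields, passed through the uniqueness statements of lem.~\ref{lem::gamma}~iii)--iv), shows that the flows commute where they are defined. The argument is the same as for iv): $s\mapsto g_{\updelta_1,g_{\updelta_2,F}(t)}(s)$ and the flow in the opposite order satisfy the same analytic Cauchy problem. The axioms of def.~\ref{def::pseudogroup} then carry over componentwise from th.~\ref{thm::pseudo_group_flot}, with $\mathbb{R}$ replaced by $\mathbb{R}^\mu$.

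For ii), I argue by contradiction. Suppose $\dim\mathcal{M}\{x_1,\dots,x_n\}<\infty$, meaning that the functions $\theta x_i$ for $\theta\in\mathcal{M}$ span a space of finite functional dimension in finitely many coordinates. Then that space is stable under $\updelta_1$ and $\updelta_2$, hence under $\updelta=[\updelta_1,\updelta_2]=\updelta_1\updelta_2-\updelta_2\updelta_1\in\mathbb{Z}\mathcal{M}$. So all $\updelta^k x_i$ lie in it and have bounded order, and $[\updelta_1,\updelta_2]$ would be integrable, which is a contradiction. For the pseudogroup statement, I note that the pseudogroup generated by the two flows contains the elements $\mathfrak{g}_{\updelta_1}(s)^{\ast}x_i$. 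Their Taylor coefficients in $s$ and $t$ of commutator words recover all $\theta x_i$, $\theta\in\mathcal{M}$, via (\ref{eq::composition1})--(\ref{eq::composition2}). Infinitely many independent such functions then force infinitely many independent parameters, so the pseudogroup cannot be finite dimensional. The main obstacle is the uniform order bound in the first step of i).
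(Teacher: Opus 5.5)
Your part ii) follows the paper's argument: if $\mathcal{M}x$ were finite dimensional it would be stable under $\updelta_1\updelta_2-\updelta_2\updelta_1$, forcing $[\updelta_1,\updelta_2]$ to be integrable. Your flow part of i) is also fine. The paper merely asserts $\mathfrak{g}_{\updelta_1}(c_1)\cdots\mathfrak{g}_{\updelta_\mu}(c_\mu)=\mathfrak{g}_{\sum_\ell c_\ell\updelta_\ell}(1)$, and your Cauchy-uniqueness argument for commuting flows supplies what is needed. Your final sentence on the pseudogroup being infinite dimensional is only heuristic, but the paper gives no argument for it either.

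The genuine gap is the step you flag as the main obstacle in i). You order with all $\updelta_j$ above $\uptau$ and infer, from ``the stair lies in finitely many $\uptau$-columns'', that the $\Delta$-orbit of the $x_i$ consists of finitely many functions of bounded order. This does not follow. With $\Delta$ dominant, a relation led by a pure $\Delta$-derivative may have a tail containing arbitrarily high $\uptau$-derivatives of smaller $\Delta$-monomials, and iterating the reduction raises the order without bound. A concrete counterexample is ex.~\ref{ex::tunnel1}, $\updelta x=x''$ on $\mathbb{T}^1$. For your ordering the leader of $\updelta x-x''$ is $\updelta x$, so the stair is the single column $\{x^{(k)}\}$, exactly as for the integrable field $\updelta x=x$, yet $\updelta^k x=x^{(2k)}$. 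So in that ordering the shape of the stair cannot certify integrability. Following the literal $\updelta$-ordering of th.~\ref{th::tunnel} is what misleads you here: ex.~\ref{ex::tunnel1} only fits that theorem if $\uptau$ dominates.

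The missing idea is that the relations $\updelta_j^{r_i}x_i=\phi_{\eta,i}(Y)$ of lem.~\ref{lem::integrability} contain no $\uptau$-derivatives at all. The paper exploits this by taking the opposite ordering, in which the $\uptau$-order dominates. Pure $\Delta$-derivatives are then the smallest, so any relation led by one of them is a pure $\Delta$-relation. Using commutation, the set $\hat\Upsilon$ of pure $\Delta$-derivatives in the stair is finite, and every $(\sum_\ell c_\ell\updelta_\ell)^k x_i$ is a function of $\hat\Upsilon$.

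An even shorter repair avoids characteristic sets. Let $e_j$ be the iterated order of $\updelta_j$. Since $[\updelta_j,\uptau]=0$, one has $\updelta_j^a x_i^{(l)}=\uptau^l\updelta_j^a x_i$, which has order at most $e_j+l$. By the chain rule, $\updelta_j^k G$ is a polynomial in partial derivatives of $G$ and in the $\updelta_j^a\upsilon$, with $\upsilon$ ranging over the coordinates on which $G$ depends. Hence $\ord\updelta_j^k G\le\ord G+e_j$ for every $k$, and so $\ord\updelta_1^{k_1}\cdots\updelta_\mu^{k_\mu}x_i\le e_1+\cdots+e_\mu$. Commutation then lets you expand $(\sum_\ell c_\ell\updelta_\ell)^k x_i$ into such monomials. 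Without commutation, alternating words escape this bound, which is exactly what happens in ex.~\ref{ex::not_stable_by_addition}.
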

\begin{proof} i) We may consider the partial diffiety defined by derivations $\uptau$ and $\Delta$ and consider its characteristic set $\mathcal{A}$ for an ordering $\prec$ such that $\uptau^{k_1}\theta_1(\Delta)x_{i_1}\prec \uptau^{k_2}\theta_2(\Delta)x_{i_2}$ if $k_1<k_2$. Then the subset $\hat\Upsilon$ of $\mathrm{Stair}\mathcal{A}$ of derivatives $\theta(\Delta)x$ is finite. For any $\mu$-uple $c$ of constants, 
the derivatives $\left(\sum_{\ell}c_{\ell}\updelta_{\ell}\right)^k$, $k\in\mathbb{N}$ depend only on $\hat\Upsilon x_i$, so that they are not independent and $\sum_{\ell}c_{\ell}\updelta_{\ell}$ is integrable.
Simple computations show that $\mathfrak{g}_{\updelta_{\ell}(s)}$ denoting the one parameter pseudogroup associated to $\updelta_{\ell}$, we have 
$$
\mathfrak{g}_{\updelta_{1}}(c_1)\cdots\mathfrak{g}_{\updelta_{\mu}}(c_{\mu})=
\mathfrak{g}_{\sum_{\ell}c_{\ell}\updelta_{\ell}}(1).
$$

ii) Assume that $\mathfrak{M}:=\mathcal{M}x$ is of finite dimension $d$. Then, $[\updelta_1,\updelta_2]^k x$ may be expressed as a linear combination of elements of $\mathfrak{M}$, so that $[\updelta_1,\updelta_2]$ would be integrable. A contradiction.
\end{proof}

\subsection{Flat diffieties}\label{subsec::flatness}

These local pseudogroups of symmetries provide information on the diffiety. In particular, these pseudogroups provide a necessary condition for differential flatness. 

First, prop.~\ref{prop::structure} shows that a subset of integrable symmetries is parametrized by arbitrary analytic functions $\mathbb{R}\mapsto\mathbb{R}$, which depend on an infinite number of coefficients. So, it contains an infinite dimensional variety. One may be more precise by looking at the growth of its dimension according to its order. 

We have seen with ex.~\ref{ex::not_stable_by_addition} that the set of integrable symmetries has no simple algebraic structure. At least, cor.~\ref{cor::int_sys} shows that they can be characterized using an algebraic PDE system~\eqref{eq::integ_cond}, that completes the linear system \eqref{eq::commutativity1} that characterizes the infinitesimal symmetries\footnote{We still assume here that $u_i=\dot x_i$, so that $b_{i,k}=a_i^{(k+1)}$.}. We denote this system by $\Sigma$. It is an algebraic PDE system with coefficient in the domain $\mathcal{O}(V)$ or its quotient field. We can then freely use all results of differential algebra~\cite{Kolchin1973}.
We need then to consider a set $\Delta$ of derivatives equal to $\{\partial_{\upsilon|\ord\upsilon\le\varpi}\}$, with $\sharp\Delta=n+m\varpi$. 

It is known that the radical differential ideal $\{\Sigma\}$ generated by $\Sigma$, \textit{i.e.} the polynomials $P$ such that a power $P^d$ belongs to the differential ideal generated by $\Sigma$ is a finite intersection of prime differential ideals: $\{\Sigma\}=\bigcap_{j=1}^s\mathcal{P}_j$. We may associate to any component $\mathcal{P}$ its differential transcendence function $H(r)$, \textit{i.e.} the number of derivatives of order $r$ that belong to the characteristic set $\mathcal{A_P}$ of $\mathcal{P}$ for an \emph{orderly} ordering, that is an ordering $\prec$ such that $\theta_1x_{i_1}\prec \theta_2x_{i_2}$, for all $(i_1,i_2)\in[1,n]^2$ when $\ord\theta_1<\ord\theta_2$. 

Such a function is known to be equal to a polynomial $h(r)$ for $r$ great enough~\cite[chap.~0\S~17 lem.~16]{Kolchin1973}. It is very close to the Hilbert function of an algebraic variety~\cite{Eisenbud1995}.

\begin{proposition}\label{prop::growth}
The differential ideal describing tame symmetries $\updelta y_i=a_i$ with $\ord a_i\le\varpi$ associated to a given partition $\mathcal{Y}_j$, $1\le j\le s$, in the coordinates $y$ has degree $(m-c_s)(\varpi +1)+c_s$, with $c_j:=\sharp Y_j$.   
\end{proposition}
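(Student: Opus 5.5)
We work in the flat coordinates $y_1,\ldots,y_m$ of the trivial diffiety (so $n=m$ here) and write down explicitly the differential system satisfied by the coefficients of a tame symmetry of order at most $\varpi$ for the fixed partition $\mathcal{Y}_1,\ldots,\mathcal{Y}_s$. By prop.~\ref{prop::structure} and def.~\ref{def::tame}, $\updelta y_i=a_i$ where, for $y_i\in\mathcal{Y}_j$, $a_i$ depends on the $\uptau$-derivatives up to order $\varpi$ of the variables in $\mathcal{Y}_1\cup\cdots\cup\mathcal{Y}_{j-1}$, and only on the order-$0$ values of the variables in $\mathcal{Y}_j$. It does not depend on variables of $\mathcal{Y}_k$ for $k>j$. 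The ambient derivations are $\Delta=\{\partial_\upsilon\mid\ord\upsilon\le\varpi\}$, acting on the unknown functions $a_i$. The defining ideal is therefore generated by the linear equations $\partial_\upsilon a_i=0$ for every forbidden $\upsilon$. These are $\upsilon=y_k^{(\ell)}$ with $k$ in a block of index $>j$, or in the same block with $\ell\ge1$.

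The key step is to observe that this linear ideal is prime and that the generators already form a characteristic set for an orderly ranking. They are of the form $\partial_\upsilon a_i$ with no cross-terms, so all $S$-polynomials (here, commuting partial derivatives of monomials) reduce to $0$. Consequently, each $a_i$ is an arbitrary analytic function of exactly the allowed variables, and the differential transcendence data split as a sum over $i$.

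For each $i$ we count the number of allowed variables, which is the number of free arguments. This is the quantity the statement calls the degree. For the top block $\mathcal{Y}_s$ it equals $(m-c_s)(\varpi+1)+c_s$. The $m-c_s$ variables outside the block each contribute $\varpi+1$ derivatives, and the $c_s$ variables of the block contribute only their value. Lower blocks give strictly fewer arguments. The degree of the ideal, understood as the leading term of the differential transcendence polynomial $h(r)$, is governed by the maximal number of independent variables on which a free component depends. That maximum is attained by the $a_i$ with $y_i\in\mathcal{Y}_s$, which yields $(m-c_s)(\varpi+1)+c_s$.

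I expect the main obstacle to be making precise which invariant "degree" refers to. It should be the degree of the Kolchin polynomial $h(r)$, namely the dimension of the largest ambient space in which a free unknown lives. The argument must also show that contributions from different $a_i$ do not combine to raise it. I would first verify the orderly characteristic set claim. I would then compute $h(r)$ as a sum of binomial coefficients $\binom{r+d_i}{d_i}$, where $d_i$ is the number of arguments of $a_i$. The degree is then $\max_i d_i=d_{\mathcal{Y}_s}$.
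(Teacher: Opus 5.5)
Your proposal is correct and follows essentially the paper's proof. The paper writes the same linear system $\partial a_{i_1}/\partial y_{i_2}^{(k)}=0$ for the forbidden derivatives and concludes that each $a_i$ with $y_i\in\mathcal{Y}_j$ is an arbitrary function of $c_j+(\varpi+1)\sum_{\ell<j}c_\ell$ arguments. It then reads off the degree of $H(r)=\sum_{j=1}^{s} c_j\binom{c_j+(\varpi+1)\sum_{\ell<j}c_\ell+r}{r}$ from the $j=s$ term. Your explicit checks, that the generators form a coherent characteristic set for an orderly ranking and that lower blocks give strictly fewer arguments, are steps the paper leaves implicit.
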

\begin{proof}
    The differential system satisfied by the $a_i$ is 
$$\begin{array}{rl}
    \frac{\partial a_{i_1}}{\partial x_{i_2}^{(k)}}& =0, 1\le j\le s, i_1, i_2\in Y_j,\> 0<k\le\varpi\\
    \frac{\partial a_{i_1}}{\partial x_{i_2}^{(k)}}& =0, 1\le j<\ell\le s, i_1\in Y_{j}, i_2\in Y_{\ell}, 0\le k\le\varpi.
    \end{array}
$$
    So, the differential transcendence function counting derivatives of $a_i\in Y_j$ alone up to order $r$ is that of an arbitrary function of $c_j+(\varpi+1)\sum_{\ell=1}^{j-1}c_\ell$, \textit{i.e.}
    $$
    {c_j+(\varpi+1)\sum_{\ell=1}^{j-1}c_\ell+r\choose r},
    $$
    so that 
    $$
    H(r) = \sum_{j=1}^s c_j{c_j+(\varpi+1)\sum_{\ell=1}^{j-1}c_\ell+r\choose r},
    $$
    which is of degree $c_s+(\varpi+1)\sum_{\ell=1}^{s-1}c_\ell=(m- c_s)(\varpi +1)+c_s$.
\end{proof}

The maximal degree $(m-1)(\varpi +1)+1$ is easily seen to be reached when $c_s=1$. Of this we deduce the following proposition.

\begin{proposition}\label{prop::flat_H}
    Assume that the diffiety described by system~\eqref{eq::sysbis} is flat with flat outputs of order at most $e$, the the system $\Sigma$ satisfied by functions $a_i$ or order $\varpi$ defining integrable symmetries admits a prime component $\mathcal{P}$ with a transcendence function $H$ or order at least $(m-1)(\varpi-e+1)+1$.
\end{proposition}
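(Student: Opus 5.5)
The plan is to transport the tame symmetries of the trivial diffiety, given by prop.~\ref{prop::structure} and counted in prop.~\ref{prop::growth}, to the flat diffiety through the flat outputs, while controlling the loss of order. Let $y_1,\ldots,y_m$ be flat outputs, with $\ord_\uptau y_j\le e$ as functions of the $x_i$. Conversely, write $x_i=X_i(y,\dot y,\ldots)$, which gives a local Lie--Bäcklund isomorphism with an open subset of $\mathbb{T}^m$. I will use the partition $\mathcal{Y}_1=\{y_1,\ldots,y_{m-1}\}$, $\mathcal{Y}_2=\{y_m\}$, i.e. $c_s=1$. Then $\updelta y_i=G_i(y_1,\ldots,y_{m-1})$ for $i<m$ is of order $0$ in its own block. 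The last component is $\updelta y_m=G_m$, depending on $y_m$ at order $0$ and on $y_1,\ldots,y_{m-1}$ up to order $\varpi'$. By prop.~\ref{prop::structure}, every such field is an integrable symmetry of $\mathbb{T}^m$, hence of the diffiety. Integrability and commutation with $\uptau$ are both invariant under the diffiety isomorphism, since the finiteness of iterated orders is preserved up to a bounded shift.

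The main step is order bookkeeping. I need $\ord_\uptau \updelta x_i\le\varpi$ for the transported field. Since $\updelta x_i=\sum_{j,k}\partial_{y_j^{(k)}}X_i\,\uptau^k G_j$, expressed back in the $x$ coordinates, the orders of $X_i$ in $y$ and of $y$ in $x$ both enter. The first thing I would try is to bound everything by $e$ and choose $\varpi'=\varpi-e$. Then a tame field of order $\varpi'$ in $y$ yields $a_i$ of order at most $\varpi$ in $x$. I expect this to be the main obstacle. A crude composition gives a shift by the order of $y$ in $x$ plus the order of $X$ in $y$, not by $e$ alone. I would try to absorb the second shift by restricting $G_m$ to depend only on suitably low derivatives, or by reinterpreting $e$ as a bound on the whole isomorphism as the statement implicitly does. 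Either choice still leaves a family parametrized by arbitrary analytic functions of $(m-1)(\varpi-e+1)+1$ variables, namely $y_m$ and $y_j^{(k)}$ for $j<m$ and $k\le\varpi-e$.

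Finally I would pass from this family to a prime component of $\{\Sigma\}=\bigcap_j\mathcal{P}_j$. The transported tame fields form an irreducible analytic family, the image of the linear space of the $G_i$ under a differential map. They satisfy $\Sigma$ by cor.~\ref{cor::int_sys}, so their Zariski closure lies in some single $\mathcal{P}_j$. The differential transcendence function of $\mathcal{P}_j$ is then bounded below by that of the generic zero of this family. By the computation in prop.~\ref{prop::growth} with $c_s=1$ and $\varpi$ replaced by $\varpi-e$, this function has degree $(m-1)(\varpi-e+1)+1$. The map $G\mapsto a$ is injective, being an isomorphism on vector fields, so dimensions of jets of order $r$ do not drop. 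This gives the required lower bound on the degree of $H$.
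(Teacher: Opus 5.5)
Your plan is the paper's own argument. Its proof is the single remark that, in the coordinates given by the flat outputs, one may take tame integrable symmetries defined by functions of order $\varpi-e$; the degree is then read off from prop.~\ref{prop::growth} with $c_s=1$.

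The order bookkeeping you flag is exactly what that remark takes for granted, and it is a genuine gap. Order $\varpi-e$ in $y$ only gives $\ord_\uptau(G_j\circ Y)\le\varpi$, i.e.\ it controls $\updelta y_j$, whereas $\Sigma$ constrains $a_i=\updelta x_i$. Neither of your fixes closes this as stated. Take $G_j=0$ for $j<m$, which is still tame. Then $a_i=\updelta X_i=\sum_k\partial_{y_m^{(k)}}X_i\,\uptau^kG_m$. For large $\varpi'$, its order in the $x$ is a priori $\varpi'+\ord_{y_m}X+\max_{j<m}\ord_\uptau Y_j$, with no cancellation in general. So reading $e$ as a bound on both $Y$ and $X$ only allows $\varpi'=\varpi-2e$. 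Lowering the order of $G_m$ instead removes exactly the variables you are counting. Hence your claim that either choice still leaves $(m-1)(\varpi-e+1)+1$ free variables is false whenever the shift exceeds $e$.

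The stated exponent needs a singleton output $y_m$ with $\ord_{y_m}X+\max_{j<m}\ord_\uptau Y_j\le e$, and the choice matters. Take the Rouchon system with $y_1=x_3-x_2\dot x_1$, $y_2=x_1$, so $e=1$, $x_2=-\dot y_1/\ddot y_2$ and $x_3=y_1-\dot y_1\dot y_2/\ddot y_2$. The singleton $\{y_1\}$ gives shift $1+0=e$. The singleton $\{y_2\}$ gives $2+1=3$; for instance $a_2=\dot y_1\ddot G_2/\ddot y_2^{2}$ has order $\varpi'+3$. For given flat outputs, a good choice need not exist at all. On $\mathbb{T}^2$ with $y_1=x_1+\ddot x_2$ and $y_2=x_2+\dot y_1$ (so $e=3$), the two choices give shifts $4$ and $6$. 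As it stands, your argument, like the paper's, yields only the degree bound $(m-1)(\varpi-s+1)+1$, with $s$ the best achievable shift.

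Your final step is sound in substance. The vanishing ideal of the parametrized family is prime and contains $\{\Sigma\}=\bigcap_j\mathcal{P}_j$, hence contains some $\mathcal{P}_j$; the smaller prime $\mathcal{P}_j$ then has the larger transcendence function. However, injectivity of $G\mapsto a$ is not why the degree survives. That map is a differential operator of positive order, so it does not compare jets of a fixed order $r$. Use instead $G_m=\updelta Y_m=\sum_{i,k}\partial_{x_i^{(k)}}Y_m\,\uptau^k a_i$. This shows that $G_m$ and the $a_i$ generate the same differential field over the quotient field of $\mathcal{O}(V)$. The degree of the transcendence polynomial (Kolchin's differential type) is an invariant of that field.
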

\begin{proof}
    It is enough to notice that in the coordinate functions associated to the flat outputs, we may consider tame integrable symmetries defined by functions of order $\varpi-e$.
\end{proof}

The main limitation of this approach is that no bound is known for the minimal order of a flat output, when $m>1$.

\section*{Conclusion}

Being given a diffiety, we are able to compute its integrable symmetries of a given order, which provides partial information on the pseudogroup of local automorphisms. This opens the hope to design more powerful tools for deciding if the set of integrable symmetries is reduced to $\{0\}$, and more generally to provide some evaluation of the growth of its dimension depending on the order. We have seen in subsec.~\ref{subsec::flatness} that this could provide a new necessary condition for differential flatness.

The structure of the pseudogroup of automorphisms of a diffiety is a difficult but promising field of investigation. A first question would be to look for new examples that could fill the gap between the generic case, with a pseudogroup reduced to identity, the diffieties  for which integrable symmetries belong to a space of small dimension and the flat diffieties, or all diffiety being flat extensions, for which the space of integrable symmetries grows quickly and the local pseudogroup of automorphisms should have a very rich structure. One may think that, in the algebraic case, the differential Cremona group, \textit{i.e.} automorphisms of a differentially transcendental extension $\mathcal{F}\langle x_1, \ldots, x_n \rangle$ already inherits the complexity of the algebraic Cremona group~\cite{Demazure1970}.

A first and possibly easier preliminary question would be to investigate if there exist non flat diffieties with an infinite dimensional space of integrable symmetries. One could start with a study of diftless systems with two controls that do no satisfy Cartan's criterion. With one control, it would be interesting to compare the complexity of computing integrable symmetries and that of known flatness criteria.

\section{Thanks}

Some preliminary results appeared in~\cite{Ollivier2010}. The authors are grateful to Bernard Malgrange for useful remarks on this work.

\end{document}